\newtheorem{theorem}{Theorem}
\newtheorem{proposition}[theorem]{Proposition}
\newtheorem{lemma}[theorem]{Lemma}
\newtheorem{definition}[theorem]{Definition}
\title{A Compressible Multifluid System  \\  \hskip-.8cm  with New Physical Relaxation Terms}
\date{\today}
\author{D. Bresch, M. Hillairet}
\begin{document}

\maketitle

\begin{abstract}
   In this paper, we rigorously  derive a new compressible multifluid system from compressible Navier-Stokes equations with density-dependent viscosity in the one-dimensional in space setting.  More precisely, we propose and mathematically derive a generalization  of the usual  
one velocity Baer-Nunziato model  with a new relaxation term in the PDE governing the volume fractions. This new
relaxation term encodes the change of viscosity and pressure between the different fluids.
  For the  reader's convenience,  we first establish a  formal derivation in the bifluid setting using a WKB decomposition and then we  rigorously justify the multifluid homogenized system 
using a kinetic formulation via Young measures characterization.
\medskip

\noindent {\bf Keywords.} Compressible Navier--Stokes,  density-dependent viscosity, multifluid systems, Baer-Nunziato, homogenization, Young measures,
effective flux, relaxation terms.

\medskip

\noindent {\bf AMS classifications.}  35Q30, 35D30, 54D30, 42B37, 35Q86, 92B05.

\end{abstract}

\section{Introduction}
    This article is devoted to the mathematical derivation of multifluid systems with one velocity and  with variable viscosity. 
    This generalizes the usual Baer-Nunziato system with one velocity, already justified in \cite{BrHi}, by modifying the PDE on the fractions through the relaxation term. 
   This term takes into account  the change of viscosity and pressure between the different fluids.
       
      If we look at physical books such as those written by M. {\sc Ishii} and T. {\sc Hibiki}  (see \cite{IsHi}) or by D.~{\sc Drew} and S.L. {\sc Passman} (see \cite{DrPa}), we understand well that it is not so easy to choose the averaging process that has to be used to derive appropriate multifluid systems and that formal closure assumptions are  all the times made in physicist's articles.
    How to derive appropriate multifluid systems reflecting interface flux laws? How to recover mathematically a PDE governing  the fraction of each component with physical relaxation terms? These issues require a careful study of the interface  evolution between the phases. Unifying the equations for all the phases into a single compressible Navier Stokes equation allows to follow the dynamics of these interfaces
(with an appropriate notion of solution  enabling to control the divergence of the velocity). Multifluid systems are then interpreted as reduced systems satisfied by particular Young measure (namely convex combinations of a finite number of Dirac masses) solutions to the homogenized  compressible Navier Stokes equation.  Proving propagation of the number of Dirac masses in Young measure solutions to this homogenized equation is then  the key-point to derive the multifluid system with new relaxation terms. 
  In this paper, we decide to work on the compressible Navier--Stokes equations with density-dependent viscosity in the one-dimensional in space setting. The density-dependent framework enables to consider phases with different viscosities. We first prove that it is possible to define an appropriate sequence of  weak solutions on which we can perform the homogenization process.  Our result generalizes to compressible Navier--Stokes equations with density-dependent viscosity the work performed in the one dimension in space by \cite{Se} related to compressible Navier--Stokes equations  with constant viscosity.

\smallskip

    Let describe in more details the method we apply here. Consider for instance in a three-dimensional container $\Omega$,  the mixture of two viscous compressible phases described by  triplets density/velocity/pressure $(\rho_+,u_+,p_+)$ and $(\rho_-,u_-,p_-)$ respectively.
    Introducing $(\mu_{\pm},\lambda_{\pm})$ and $\mathcal P_{\pm}$ the respective viscosities and pressure laws of the phases,  we obtain that, for $i = +,-$ the triplet is a solution to the compressible Navier Stokes equations
 \begin{eqnarray*}
 \partial_t \rho_{i} + {\rm div} (\rho_i u_i) &=& 0 \\
 \partial_t (\rho_i u_i) + {\rm div} (\rho_i u_i\otimes u_i) &=& {\rm div}\Sigma_i  
 \end{eqnarray*}  
 on its domain $\mathcal F_{i}(t),$  with the equation of state:
 \begin{eqnarray*}
  \Sigma_i &=& \mu_i (\nabla u_i + \nabla^{\top} u_i) + (\lambda_i {\rm div} u_i - p_i) \mathbb I_3 \\
 p_i &=& \mathcal P_i (\rho_i).
\end{eqnarray*}
 Neglecting the properties of the interfaces, so that: 
\begin{itemize}
\item $ \mathcal F_{+} \cup \mathcal F_{-} \cup (\overline{\mathcal F_+} \cap \overline{\mathcal F_-})= \Omega \,,$      
\item the phases do not slip one on the other at the interface,
\item we have continuity of the normal stresses at the interface,
\end{itemize}
we have that the extended unknowns 
 $$
\rho = \rho_+ \mathbf{1}_{\mathcal F_+} + \rho_- \mathbf{1}_{\mathcal F_-} \quad u =  u_+ \mathbf{1}_{\mathcal F_+} + u_- \mathbf{1}_{\mathcal F_-} 
 $$
satisfy the compressible Navier Stokes equations on the whole container $\Omega:$
 \begin{eqnarray} \label{eq_CNS1}
 \partial_t \rho + {\rm div} (\rho u) &=& 0 \\
 \label{eq_CNS2}\partial_t (\rho u ) + {\rm div} (\rho u \otimes u) &=& {\rm div}\Sigma  .
 \end{eqnarray}  
 Assuming further that the densities of the different phases range two non-overlapping intervals $I_+$ and $I_-$, we can complement this system by the equations of states:
  \begin{eqnarray}
\label{eq_CNS3}  \Sigma_i &=& 2 m (\nabla u + \nabla^{\top} u) + (l {\rm div} u - p) \mathbb I_3 \\
\label{eq_CNS4}  p &=& \mathcal P (\rho)  \quad m = \mathcal M(\rho) \quad l = \Lambda(\rho)\,, 
\end{eqnarray}
with functions $\mathcal P, \mathcal M,\Lambda$ such that for $i\in \{+,-\}$ we have:
$$
\mathcal P(\rho) = p_i(\rho), \quad  \mathcal  M(\rho) = m_i ,\quad \Lambda(\rho)  = \lambda_i, \quad \forall \, \rho \in I_i.
$$
We aim here to compute solutions to this system where any time/space cell of arbitrary small size contains
a fraction of phase $+$ and a fraction of phase $-.$ Thus, our problem reduces to a mathematical study of the 
homogenization of solutions to the extended compressible Navier Stokes equation \eqref{eq_CNS1}--\eqref{eq_CNS4} with respect to initial density. 
This  method for the justification of multifluid systems has been successfully applied in the multi-dimensional  setting recently by the authors starting from the compressible Navier--Stokes equation with constant viscosity  \cite{BrHi}. The interested reader is also referred to \cite[Section 7]{PlSo} where the kinetic equation formulation has been proposed in terms of the cumulative distribution function and without characterization of the Young measures which gives multifluid systems. One corollary of the results in this paper is that the method is robust as it extends to the viscosity-dependent case making precise, in the multifluid setting, previous results initiated by {\sc A.A. Amosov} and {\sc A.A. Zlotnik} in the 90's, see \cite{AmZl} and references cited therein. Note also that we do not use the Lagrangian formulation but directly work on the Eulerian system.

   In the one-space dimension setting, that we consider in this article,  we construct appropriate solutions $(\rho,u,p)$ of the following system
\begin{eqnarray} 
\label{eq_CNS1D1} \partial_t \rho + \partial_x(\rho u) &=& 0 \,, \\
 \label{eq_CNS1D2} \partial_t \rho u + \partial_x (\rho u^2) &=& \partial_x (\mu(\rho) \partial_x  u) - \partial_x p \,, \\ 
p &=& p(\rho)  \label{eq_CNS1D3}
\end{eqnarray}
and derive a kinetic equation which governs the evolution of the homogenized system (in terms of Young measures/velocity).
With Young measures written as the convex combination of $k$ Dirac masses, we obtain then the multifluid system that  reads:
\begin{align}
& \partial_t \alpha_i + \partial_x (\alpha_i u)   = \dfrac{\alpha_i}{\mu(\rho_i)}f_i \\
& \partial_t \rho_i  + u \partial_x \rho_i =  - \dfrac{\rho_i}{\mu(\rho_i)}f_i \\ 
& \partial_t (\rho u) + \partial_x (\rho u^2)  = \partial_x [ \mu \partial_x u - p] 
\end{align}
for $1\le i\le k$ with 
\begin{eqnarray*}
f_i &=& \dfrac{1}{ \displaystyle \left[\sum_{j=1}^{k} \dfrac{\alpha_j}{\mu(\rho_j)}\right]} \left(\partial_x u - \sum_{j=1}^k \alpha_j \dfrac{p(\rho_j)}{\mu(\rho_j)} \right) +  p(\rho_i)\quad \text{ for $1 \leq i \leq k$} \\
\displaystyle \mu &=& \frac{1}{\displaystyle \sum_{i=1}^k 
\displaystyle \frac{\alpha_i}{\mu (\rho_i)}}, \qquad
 p =  \frac{ \displaystyle  \sum_{i=1}^k \frac{\alpha_i p(\rho_i)}{\mu(\rho_i)}}{\displaystyle
 \sum_{i=1}^k  \displaystyle \frac{\alpha_i}{\mu(\rho_i)}}.
\end{eqnarray*}
Note that in the bifluid setting,  to get such a system we can also formally search  for two-scale solutions (a kind of WKB expansion) under the following form:
\begin{eqnarray} \label{eq_expansion1}
\rho(t,x) &=& \sum_{i=+,-} \theta_i\left(t, \dfrac t \varepsilon , x , \dfrac x \varepsilon \right) \rho^{\varepsilon}_i(t,x)\,, \\ u(t,x) & = & u_0\left(t, \dfrac t \varepsilon , x , \dfrac x \varepsilon \right) + \varepsilon u_1  \left(t, \dfrac t \varepsilon , x , \dfrac x \varepsilon \right) + \varepsilon^2 u_2  \left(t, \dfrac t \varepsilon , x , \dfrac x \varepsilon \right) + O(\varepsilon^3)\,, \label{eq_expansion2}
\end{eqnarray}
assuming that 
\begin{equation} \label{eq_expansionHypo}
\rho^{\varepsilon}_i(t,x) = \rho_{i}^0(t,x) + O(\varepsilon) \,, \quad \theta_i(t,\tau,x,y) \in \{0,1\} \quad \text{a.e.}.
\end{equation}
After some calculations, the general system that we obtain on $(\alpha_\pm,  u_0,  \rho^0_\pm, \overline p)$, where $\alpha_\pm$ denotes the average with respect to the fast variables $(\tau,y)$ of $\theta_\pm$,  reads
\begin{eqnarray}
&&  \alpha_+ + \alpha_- = 1, \\[8pt]  
&&  \partial_t  \alpha_+  + u_0\partial_x  \alpha_+ =  \frac{\alpha_+ \, \alpha_-}{
        \alpha_- \mu_+^0 +  \alpha_+ \mu_-^0} [(p_+^0-p_-^0) 
         + (\mu_-^0 -\mu_+^0) \partial_x u_0]\\[8pt]
&& \partial_t(\alpha_+ \rho_+^0) + \partial_x( \alpha_+ \rho_+^0 u_0)
=  0 \,, \\[4pt]
 &&\overline\rho (\partial_t u_0 + u_0 \partial_x u_0)
     - \partial_x(m  \partial_x u_0) + \partial_x  \pi 
     = 0\,, \\[8pt]
\label{rel2}&& \mu_+^0 = \mu(\rho_+^0), \qquad \mu_-^0  =   \mu(\rho_-^0), \qquad
    m = \dfrac{\mu_+^0 \mu _-^0}{\alpha_+ \mu_-^0 + \alpha_- \mu_+^0}\,, \\
    \label{rel1} && p_+^0 = p(\rho_+^0), \qquad p_-^0  =   p(\rho_-^0), \qquad
    \overline \rho = \alpha_+ \rho_+^0 +  \alpha_-\rho_-^0,  \qquad
    \pi = \dfrac{\alpha_+ p_+^0 \mu_-^0  + \alpha_- p_-^0 \mu_+^0}{\alpha_+ \mu_-^0 + \alpha_- \mu_+^0}. 
\end{eqnarray}
Remark that, in the two-fluid setting, we obtain a new equation on the fraction $\alpha_+$ in which the difference on the effective fluxes plays a crucial role. 
Namely, we obtain
$$ \partial_t \alpha_+ + u_0\partial_x \alpha_+
     = \frac{\alpha_+ \, \alpha_-}{
     \   \alpha_-  \mu_+^0 +  \alpha_+\mu_-^0} [F_+-F_-].
$$
where $F_\pm = -\mu_\pm^0 \partial_x u_0 + p_\pm^0$ and $p^0_\pm$ and $\mu_\pm^0$
are defined by \eqref{rel1}--\eqref{rel2}. 
In the particular case $\mu(\rho) = \mu = \hbox{cste },$ the system reduces to the one-dimensional system that has been formally derived by W.E. in \cite{E} and  fully mathematically  justified by D.~{\sc Serre} in \cite{Se}.  In that case, the PDE on $\alpha_+$ simplifies as
$$ \partial_t  \alpha_+ + u_0\partial_x \alpha_+
     = \frac{\alpha_+ \, \alpha_-}{
      \mu} (p_+^0-p_-^0).
$$
In the appendix B, we show that the system obtained through the formal WKB method and the system derived using kinetic formulation and characterization of the Young measures are the same.
 
   The outline of the paper is as follows. We start by our mathematical results. Then,  we provide a formal derivation for  two-fluid flows plugging the WKB ansatz mentioned above in the compressible Navier--Stokes system with density-dependent viscosity and identifying the different terms. Finally, we justify this formal calculation to get a multifluid system with  $k$ phases using homogenization technics through Young-measures characterization.  The control of the divergence of the velocity field is a key-point of our analysis as it enables to follow the dynamics of the interfaces.  In an appendix, we recall some well-known results on the transport equation and as mentioned previously we compare the result obtained formally and the one obtained through Young-measures characterization in the bifluid setting.

%%%%%%%%%% Mathematical results 

\section{Mathematical results.}
  In this section we make precise the assumptions on the equations of state in the system under consideration. Then,  we give the first result of existence which will be used in the homogenization process.

      We consider the following Navier--Stokes system with density-dependent viscosity:
\begin{equation} \label{eq_NSC}
\left\{
\begin{array}{rcl}
\partial_t \rho + \partial_x (\rho u) &=& 0 \,, \\
\partial_t \rho u + \partial_x (\rho u^2) &=& \partial_x [ \mu \partial_x  u] - \partial_x p \,, \\
\end{array}
\right.
\text{ on $(0,L)$}
\end{equation}
completed with the equations of state: 
\begin{equation}  \label{eq_ee}
p = p(\rho) \qquad \mu = \mu(\rho) 
\end{equation}
where $p$ and $\mu$ are given and sufficiently smooth: we assume throughout the paper that
\begin{align}
& p \in C^1([0,\infty)) \,, \quad \text{ with }  \quad p'(s) \geq  0, & \forall \, s \in [0,\infty). \\
& \mu \in C^1([0,\infty)) \,, \quad \text{ with } \quad \mu(s) \geq \mu^0(1+\sqrt{s}), & \forall \, s \in [0,\infty).  \label{eq_assummu}
\end{align} 
where  $\mu^0$ is a given strictly positive constant. The assumption on $p$ can be relaxed into:
\begin{align} \label{eq_p2}
& p \in C^1([0,\infty)) \,, \quad \text{ with }  \quad p'(s) \geq 0\,,  & \forall \, s >>1.
\end{align}
But we compute with the previous one for simplicity.
 We complement the above pdes with:
\begin{itemize}
\item periodic boundary conditions in $x$
\item initial conditions:
\begin{equation} \label{eq_id}
\rho(0,x) = \rho^0(x) \,, \quad u(0,x) = u^0(x) \,.  
\end{equation}
\end{itemize}

\bigskip

\paragraph{\bf Conventions for periodic functions.} We denote indifferently with $\sharp$ or quotient-set periodic-function 
spaces. For instance $L^{\infty}(\mathbb R/L\mathbb Z) = L^{\infty}_{\sharp}.$ The symbol
$L$ is implicit for any sharped notations. 
 For a Banach space $X$ such as $L^p$ or $H^m,$ we endow $X_{\sharp}$ with the norm:
$$
\|u\|_{X_{\sharp}} = \|u_{|_{(0,L)}}\|_{X(0,L)}.
$$ 
We recall that $X_{\sharp}$ is a Banach space endowed with this norm and that a sequence $u_n$ converges toward $u$ in $X_{\sharp}$ for the strong topology (resp. for the weak or the weak$-*$ topology) if and only if $u_n$ converges toward $u$ in $X(-M,M)$ for
the strong topology (resp. for the weak or the weak$-*$ topology), whatever the value of $M \in L\mathbb N^*.$ In particular, if $u_n$ converges toward $u$ in $X$ (endowed with the weak/weak$-*$ topology)  then $u_n$ converges toward $u$ in $\mathcal D'(\mathbb R).$

%We will use in Section 4 notations  such as $C^{\infty}_c(\mathbb R/L\mathbb Z \times (0,C_0))$ for some parameter $C_0 >0.$ This stands for smooth functions periodic in the first variable and having compact support in the second one.
\bigskip

Our first target result  is the following theorem:
\begin{theorem} \label{thm_existence1}
Given $\rho^0 \in L^{\infty}_{\sharp}$ and $u^0 \in H^1_{\sharp}$ satisfying
\begin{equation} \label{eq_hyporho0}
\underline{\rho^0} := \inf \rho^0(x) >0, \qquad \overline{\rho^0} = \sup \rho^0(x) < \infty\,, 
\end{equation}
there exists $T_0$ depending on $\underline{\rho^0},\overline{\rho^0},\|u^0\|_{H^1_{\sharp}}$
such that there exists at least one pair $(\rho,u)$ for which:
\begin{itemize}
\item[$({\mathbf{HDS}})_a$] we have the regularity statements:
\begin{align}
 & \rho \in L^{\infty}((0,T_0); L^{\infty}_{\sharp}) \cap C([0,T_0] ; L^1_{\sharp}) \,,  \\
 & u \in L^{\infty}((0,T_0);H^{1}_{\sharp})  \cap C([0,T_0];L^2_{\sharp})    \,,      \\
 &  z := \mu(\rho) \partial_x u - p(\rho) \in L^{2}((0,T_0);H^1_{\sharp}) ;
 \end{align}
\item[$({\mathbf{HDS}})_b$] $(\rho,u)$ satisfies \eqref{eq_NSC} in $\mathcal D'((0,T) \times \mathbb R),$ with $p,\mu$ given 
by \eqref{eq_ee},  and matches initial conditions \eqref{eq_id} in $L^2_{\sharp} \times H^1_{\sharp}\,,$
\item[$({\mathbf{HDS}})_c$] we have the following bounds :
\begin{enumerate}[$\bullet$]
\item for a.e. $(t,x) \in (0,T_0) \times (\mathbb R/L\mathbb Z) $ there holds:
\begin{align} \label{eq_regathm}
\dfrac{1}2 \underline{\rho^0} \leq \rho(t,x) \leq 2 \overline{\rho^0} \,,
\end{align}
\item for a.e. $t \in (0,T_0)$ there holds (see \eqref{eq_defq} for the definition of $q$):
\begin{align} \label{eq_regbthm}
& \int_{0}^L \left[ \dfrac 12|u(t,\cdot)|^2 +  q(\rho(t,\cdot)) \right]    + \int_0^t \int_0^L \mu |\partial_x u|^2   \leq  \int_{0}^L \left[ \dfrac 12|u^0|^2 +  q(\rho^0) \right]  
\end{align}
\item there exists a constant $K_0$ depending only on $\underline{\rho^0},\overline{\rho^0}$ and $\|u^0\|_{H^1_{\sharp}}$ for which
\begin{align} \label{eq_regcthm}
& \sup_{t \in (0,T_0)} \|u(t,\cdot)\|_{H^1_{\sharp}}^2 + \int_0^{T_{0}} \|\partial_x z(t,\cdot)\|^2_{L^2_{\sharp}} \leq K_0.
\end{align}
\end{enumerate}
\end{itemize}
 \end{theorem}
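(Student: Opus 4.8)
The plan is to obtain $(\rho,u)$ as the limit of a sequence of smooth approximate solutions, the whole difficulty being to produce \emph{a priori} estimates that are uniform in the approximation parameter and that reproduce exactly the three blocks of bounds $({\mathbf{HDS}})_c$. The organizing object is the effective flux
\[ z := \mu(\rho)\,\partial_x u - p(\rho), \]
in terms of which the momentum equation reads $\partial_x z = \rho\,(\partial_t + u\partial_x)u$, while the constitutive relation gives $\partial_x u = (z+p(\rho))/\mu(\rho)$. Differentiating $z$ along the flow and using the continuity equation $(\partial_t+u\partial_x)\rho = -\rho\,\partial_x u$ yields a \emph{convection--diffusion} equation for $z$,
\[ (\partial_t + u\partial_x) z = \mu(\rho)\,\partial_x\!\Big(\tfrac1\rho\,\partial_x z\Big) - \big(\mu(\rho)+\rho\mu'(\rho)\big)(\partial_x u)^2 + \rho\,p'(\rho)\,\partial_x u, \]
which I regard as the heart of the matter: once the density is pinched in a fixed compact subinterval of $(0,\infty)$, this equation is uniformly parabolic with coefficients bounded above and below (here only the positivity $\mu\geq\mu^0>0$ from \eqref{eq_assummu} is used, not the $\sqrt{s}$ growth). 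I would therefore recast the system as the coupled pair consisting of a transport equation for $\rho$ and the parabolic equation for $z$.

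First I would record the energy estimate \eqref{eq_regbthm}, obtained by testing the momentum equation against $u$ and using the continuity equation to integrate the pressure work into $q(\rho)$; this is standard and supplies the baseline $L^2_tL^2_x$ control of $\sqrt\mu\,\partial_x u$. Next come the density bounds \eqref{eq_regathm}: integrating $(\partial_t+u\partial_x)\rho=-\rho\,\partial_x u$ along the characteristics $\dot X = u(t,X)$ gives $\rho(t,X(t)) = \rho^0\exp\!\big(-\int_0^t \partial_x u\,ds\big)$, so that \eqref{eq_regathm} holds as soon as $\int_0^t\|\partial_x u(s,\cdot)\|_{L^\infty_\sharp}\,ds \leq \ln 2$; since $\partial_x u = (z+p(\rho))/\mu(\rho)$ and $H^1_\sharp \hookrightarrow L^\infty_\sharp$ in one dimension, this reduces to a smallness condition on $\int_0^t\|z\|_{H^1_\sharp}$. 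Finally, for \eqref{eq_regcthm} I would test the $z$-equation with $z/\mu(\rho)$: the diffusion term produces, after integration by parts and using $\tfrac1\rho\partial_x z = (\partial_t+u\partial_x)u$, the clean dissipation $-\int \tfrac1\rho (\partial_x z)^2$ \emph{without any spatial derivative of $\rho$}, while the remaining terms are polynomial in $z$ with coefficients controlled by the density bounds. The only genuinely nonlinear contributions are cubic in $z$; estimating $\int z^3 \leq \|z\|_{L^\infty_\sharp}\|z\|_{L^2_\sharp}^2$ with the one-dimensional Gagliardo--Nirenberg inequality $\|z\|_{L^\infty_\sharp}\lesssim \|z\|_{L^2_\sharp}+\|z\|_{L^2_\sharp}^{1/2}\|\partial_x z\|_{L^2_\sharp}^{1/2}$ and absorbing $\|\partial_x z\|_{L^2_\sharp}$ into the dissipation via Young's inequality yields a differential inequality
\[ \tfrac{d}{dt}\|z\|_{L^2_\sharp}^2 + c\,\|\partial_x z\|_{L^2_\sharp}^2 \leq C\big(1+\|z\|_{L^2_\sharp}^2+\|z\|_{L^2_\sharp}^6\big). \]
Gronwall then gives $\sup_t\|z\|_{L^2_\sharp}^2$ and $\int_0^{T_0}\|\partial_x z\|_{L^2_\sharp}^2$; recovering $\partial_x u=(z+p(\rho))/\mu(\rho)$ converts this into the $\sup_t\|u\|_{H^1_\sharp}^2$ bound, i.e. \eqref{eq_regcthm} with its constant $K_0$.

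The main obstacle is that these two families of bounds are mutually dependent: the parabolic estimate for $z$ presupposes the density pinching \eqref{eq_regathm} (for uniform ellipticity and bounded coefficients), whereas \eqref{eq_regathm} presupposes smallness of $\int_0^t\|z\|_{H^1_\sharp}$, which is drawn precisely from the $z$-estimate. I would close the loop by a continuity (bootstrap) argument: set $T^\ast$ to be the largest time up to which both $\tfrac12\underline{\rho^0}\le\rho\le 2\overline{\rho^0}$ and the bound \eqref{eq_regcthm} hold, and show using the above that on $[0,T^\ast)$ the density in fact stays in the \emph{open} interval and the $z$-bound holds with a strictly smaller constant, so that $T^\ast$ cannot be a breakdown time below the value $T_0$ fixed by requiring that $\int_0^{T_0}\|z\|_{H^1_\sharp}\,dt\le C\sqrt{T_0}\,(K_0+\dots)^{1/2}\le\ln 2$. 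This is where $T_0$ acquires its dependence on $\underline{\rho^0},\overline{\rho^0},\|u^0\|_{H^1_\sharp}$, and the factor $2$ in \eqref{eq_regathm} is exactly the room the argument needs.

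It remains to manufacture the approximating sequence on which these estimates may be run rigorously. I would mollify the initial data and add an artificial viscosity $\varepsilon\partial_{xx}\rho$ to the continuity equation, solving the regularized system by a fixed-point/Galerkin scheme so as to obtain smooth solutions for which the integrations by parts above are licit; crucially, each estimate is insensitive to $\varepsilon$ (the $\varepsilon$-terms have a favorable sign or are lower order), so the bounds $({\mathbf{HDS}})_c$ pass to the sequence uniformly. Passage to the limit then uses these uniform bounds: $u$ is compact in $C([0,T_0];L^2_\sharp)$ by Aubin--Lions (the time-regularity of $\rho u$ coming from the momentum equation) and $\partial_x u$, $z$ converge weakly in $L^2$, while the density bounds together with the transport structure give strong convergence of $\rho$ via a renormalization/compactness argument for the linear transport equation, enough to identify the nonlinear terms $p(\rho),\mu(\rho)$ in the limit and to recover $({\mathbf{HDS}})_a$--$({\mathbf{HDS}})_b$. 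The regularity $z\in L^2((0,T_0);H^1_\sharp)$ and the matching of initial data follow from the same bounds and the continuity-in-time they provide.
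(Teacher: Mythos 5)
Your a priori estimate scheme is sound, and it is a genuine (if modest) variant of the paper's: where you derive the convection--diffusion equation for $z$ and test it with $z/\mu(\rho)$, the paper obtains the very same bounds \eqref{eq_regathm}--\eqref{eq_regcthm} by testing the momentum equation directly with the Hoff-type multiplier $v=\partial_t u-\partial_x^{-1}\left[\partial_t\kappa-\mathbb E[\partial_t\kappa]\right]$, $\kappa=p/\mu$ (Lemmas \ref{lem_divu} and \ref{lem_uniformbound}), and closes with the same bootstrap/connectedness argument; your cancellation of all $\partial_x\rho$ terms in the $z/\mu$ test does check out. The genuine gap is in your final compactness step. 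You claim that ``the density bounds together with the transport structure give strong convergence of $\rho$ via a renormalization/compactness argument for the linear transport equation, enough to identify the nonlinear terms $p(\rho),\mu(\rho)$.'' This is exactly the hard point of the whole problem, and renormalization alone cannot do it: the renormalized continuity equation for $\beta(\rho_n)$ contains the term $(\rho_n\beta'(\rho_n)-\beta(\rho_n))\partial_x u_n$, a product of two sequences that converge only \emph{weakly} -- indeed $\partial_x u_n=(z_n+p(\rho_n))/\mu(\rho_n)$ inherits the oscillations of $\rho_n$, so no strong compactness for $\partial_x u_n$ is available -- and the weak limit of such a product is not the product of the weak limits. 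What rescues the argument in the paper is the compensated-compactness (effective viscous flux) Lemma \ref{lem_compcomp}, $\beta(\rho_n)z_n\rightharpoonup\beta^\infty z^\infty$, which closes the limit equation as a kinetic equation for the Young measure of $\rho_n$, together with the uniqueness theorem for that transport equation proved in Appendix \ref{app_young}; strong convergence of $\rho_n$ (propagation of the Dirac structure, case $k=1$) is a consequence of that uniqueness, not of DiPerna--Lions theory for a fixed velocity field. That some such mechanism is unavoidable is demonstrated by Theorem \ref{thm_multi} itself: when $\rho^0_n$ converges only weakly, the limit $(\rho,u)$ does \emph{not} solve \eqref{eq_NSC}, so any proof of strong convergence must propagate in time the strong convergence of the initial data; your proposal contains no device that does this.

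A second, more technical flaw lies in your approximation scheme: you add an artificial viscosity $\varepsilon\partial_{xx}\rho$ to the continuity equation and assert that the $\varepsilon$-terms are ``favorable or lower order'' in every estimate. This is unjustified exactly where it matters. Both the derivation of your $z$-equation and the cancellations in the $z/\mu(\rho)$ test use the identity $(\partial_t+u\partial_x)\rho=-\rho\,\partial_x u$ exactly; with the regularization they produce commutator terms of the form $\varepsilon\int\bigl(\mu'(\rho)\partial_x u-p'(\rho)\bigr)\partial_{xx}\rho\;z/\mu(\rho)$, which carry two spatial derivatives of $\rho$, have no sign, and admit no $\varepsilon$-uniform bound from the natural parabolic estimates (these control at best $\sqrt{\varepsilon}\,\partial_x\rho$ in $L^2$, not $\varepsilon\,\partial_{xx}\rho$). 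The cleaner route -- the one taken in the paper -- is to dispense with artificial viscosity altogether: in one space dimension, for mollified data $(\rho^0,u^0)\in H^1_\sharp\times H^1_\sharp$, global strong solutions exist by Theorem \ref{thm_strsol} (Mellet--Vasseur/Kanel theory, with the BD entropy keeping $\rho$ bounded away from $0$ and $\infty$), and your estimates can be run on those solutions as they stand; only the initial data then need approximating, and the compactness issue described above remains the sole obstruction.
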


We call solutions in the sense of {\bf Theorem \ref{thm_existence1}} HD  solutions to \eqref{eq_NSC}-\eqref{eq_ee} (after D.  {\sc  Hoff} and B. {\sc Desjardins} who constructed independently
such solutions for the constant viscosity case).
The scheme of our proof follows classical lines but we write details for reader's convenience :
\begin{itemize}
\item first, we obtain classical solutions to a regularized version of our system using the BD entropy procedure. This procedure in one-D is well known since the work in 1968 by  Y. {\sc Kanel} in \cite{Ka}.
  In our case, "regularized" only means that we assume the initial data to satisfy further $\rho^0 \in H^1_{\sharp}$ ;
\item second, we prove that the strong solutions are HD-solutions on some time-interval $(0,T_0)$ where $T_0$ depends only on $\underline{\rho^0},\overline{\rho^0},\|u^0\|_{H^1_{\sharp}}$ ;
\item third, we apply a compactness argument showing that a sequence of solutions to the regularized system converges to the solution whose existence is claimed  in our theorem.
These HD solutions provide us with the solutions on which we  justify the homogenized procedure through Young measures.
\end{itemize}

 Enlarging the range of the  compactness argument, we also obtain the main result of this paper,
 namely, the mathematical justification of a generalization of the Baer-Nunziato with one velocity.
 More precisely we obtain the following mathematical result
\begin{theorem} \label{thm_multi}
Let $T_0 >0$ and  $(\rho_n,u_n)_{n\in \mathbb N}$ be a sequence of solutions to \eqref{eq_NSC}-\eqref{eq_ee}  on $(0,T_0),$ in the sense of {\bf Theorem \ref{thm_existence1}}, 
with  respective initial data $\rho^0_n \in L^{\infty}_{\sharp}$ and $u^0_n \in H^1_{\sharp}.$ Assume that the sequence of initial data 
satisfies 
\begin{enumerate}[$\bullet$]
\item $u^0_n \rightharpoonup u^0$ in $H^1_{\sharp}-w$
\item there exists a constant $C_0 >0$ such that  $ 1/C_0 \leq \rho^0_n \leq C_0$ uniformly,
\item there exists $(\alpha^0_i,\rho^0_i)_{i=1,...,k} \in [L^{\infty}_{\sharp}]^{2k}$ such that $\rho^0_n$ converges in the sense of Young measure  (to be defined below) towards
$$
\nu^0 = \sum_{i=1}^{k} \alpha_i^0  \delta_{\xi=\rho_i^0}
$$
\end{enumerate}
Then, up to the extraction of a subsequence, $(\rho_n,u_n)$ converges to 
$((\alpha_i,\rho_i)_{i=1,\ldots,k},u)$ (in a sense to be made precise) for which we have:
\begin{enumerate}[$\bullet$]
\item the regularity statements:
\begin{align}
 & \alpha_i \in L^{\infty}((0,T_0); L^{\infty}_{\sharp}) \cap C([0,T_0] ; L^1_{\sharp}) \text{ with }   \\
 \notag  & \qquad \qquad \alpha_i \geq 0 \,, \qquad  \forall i \in \{1,\ldots,k\} \,, \qquad \, \sum_{i=1}^k \alpha_i = 1\,, \quad  \text{ a.e. } \\
 & \rho_i \in L^{\infty}((0,T_0); L^{\infty}_{\sharp}) \cap C([0,T_0] ; L^1_{\sharp}) \text{ with }    \\
\notag  & \qquad \qquad  C_0/2 \leq \rho_i \leq 2 C_0 \text{ a.e. } \\
 & u \in L^{\infty}((0,T_0);H^{1}_{\sharp})  \cap C([0,T_0];H^{1}_{\sharp}-w) ;         %\\
 %&  z := \mu \partial_x u - p(\rho) \in L^{2}((0,T_0);H^1_{\sharp}) 
 \end{align}
\item the partial differential system (in the sense of $\mathcal D'((0,T_0) \times \mathbb R)$):
\begin{align}
& \partial_t \alpha_i + \partial_x (\alpha_i u)   = \dfrac{\alpha_i}{\mu(\rho_i)}f_i, \\
& \partial_t \rho_i  + u \partial_x \rho_i =  - \dfrac{\rho_i}{\mu(\rho_i)}f_i, \\ 
& \partial_t (\rho u) + \partial_x (\rho u^2)  = \partial_x [ m \partial_x u - \pi ],
\end{align}
where :
$$
\rho = \sum_{j=1}^k \alpha_j \rho_j,  \qquad  m = \left[ \sum_{j=1}^k \dfrac{\alpha_j }{\mu(\rho_j)}\right]^{-1}, \qquad \pi =  m \sum_{j=1}^k \alpha_j \dfrac{p(\rho_j)}{\mu(\rho_j)}.
$$
and
$$
f_i = \dfrac{1}{ \displaystyle \left[\sum_{j=1}^{k} \dfrac{\alpha_j}{\mu(\rho_j)}\right]} \left(\partial_x u - \sum_{j=1}^k \alpha_j \dfrac{p(\rho_j)}{\mu(\rho_j)} \right) +  p(\rho_i).
$$
\item  the initial conditions:
\begin{align}
& \alpha_i(0,\cdot) = \alpha_i^0 \text{ in $L^1_{\sharp}$}, \\
&\rho_i(0,\cdot) = \rho_i^0  \text{ in $L^1_{\sharp}$}, \\
& u(0,\cdot) = u^0 \text{ in $H^1_{\sharp}$}.
\end{align}
\end{enumerate}  
\end{theorem}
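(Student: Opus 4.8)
My plan is to pass to the limit in the two conservation laws along a subsequence, tracking $\rho_n$ through the Young measure it generates and closing the system via the effective flux. Throughout I abbreviate $z_n:=\mu(\rho_n)\partial_x u_n-p(\rho_n)$ and write $\langle g\rangle(t,x):=\langle\nu_{t,x},g\rangle$ for the average of $g\in C(\mathbb R)$ against the Young measure $\nu_{t,x}$ generated by $(\rho_n)$. First I would record compactness: from $(\mathbf{HDS})_c$ and $1/C_0\le\rho^0_n\le C_0$ one gets, uniformly, $\tfrac1{2C_0}\le\rho_n\le 2C_0$, $u_n$ bounded in $L^\infty(0,T_0;H^1_\sharp)$ and $z_n$ bounded in $L^2(0,T_0;H^1_\sharp)$. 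Writing momentum as $\partial_t u_n=\rho_n^{-1}\partial_x z_n-u_n\partial_x u_n$ and using $H^1_\sharp\hookrightarrow L^\infty_\sharp$ gives $\partial_t u_n$ bounded in $L^2(0,T_0;L^2_\sharp)$, so by Aubin--Lions $u_n\to u$ strongly in $C([0,T_0];L^2_\sharp)$ and a.e., with $u\in L^\infty(0,T_0;H^1_\sharp)$; extracting further, $g(\rho_n)\rightharpoonup\langle g\rangle$ weak-$*$ in $L^\infty$ for all $g\in C(\mathbb R)$ and $z_n\rightharpoonup z$ in $L^2(0,T_0;H^1_\sharp)$. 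The ingredient making the limit tractable is a compensated-compactness identity for $z_n$ times functions of $\rho_n$: for $b\in C^1$ the renormalized continuity equation gives $\partial_t b(\rho_n)$ bounded in $L^2(0,T_0;H^{-1}_\sharp)$ (compactness in time), while $z_n$ is bounded in $L^2(0,T_0;H^1_\sharp)$ (compactness in space), so by the classical lemma on compactness in complementary variables
\[ b(\rho_n)\,z_n\;\rightharpoonup\;\langle b\rangle\,z\qquad\text{in }\mathcal D'((0,T_0)\times\mathbb R). \]
In particular no strong convergence of the effective flux is needed.

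Next I would derive the equation for $\nu$. Since $\rho_n\in L^\infty$ and $u_n\in L^\infty(H^1_\sharp)$ in one dimension, the continuity equation renormalizes, so for every $\beta\in C^1$
\[ \partial_t\beta(\rho_n)+\partial_x\bigl(u_n\beta(\rho_n)\bigr)=\bigl(\beta(\rho_n)-\rho_n\beta'(\rho_n)\bigr)\frac{z_n+p(\rho_n)}{\mu(\rho_n)}. \]
On the left $u_n\to u$ strongly and $\beta(\rho_n)\rightharpoonup\langle\beta\rangle$; on the right the purely $\rho_n$-dependent part converges weak-$*$ and the part carrying $z_n$ is handled by the product identity above with $b=(\beta-\xi\beta')/\mu$. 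With $g(\xi):=(z+p(\xi))/\mu(\xi)$ this gives the closed family
\[ \partial_t\langle\beta\rangle+\partial_x(u\langle\beta\rangle)=\langle(\beta-\xi\beta')\,g\rangle\qquad\text{for all }\beta\in C^1, \]
which is the weak form, in $\mathcal D'((0,T_0)\times\mathbb R\times\mathbb R)$, of the linear transport equation
\[ \partial_t\nu+\partial_x(u\,\nu)+\partial_\xi(\mathcal A\,\nu)=g\,\nu,\qquad \mathcal A(t,x,\xi):=-\frac{\xi}{\mu(\xi)}\bigl(z(t,x)+p(\xi)\bigr). \]
Taking $\beta\equiv1$ recovers $\partial_x u=\langle g\rangle$, and passing to the limit in $\partial_x u_n=z_n/\mu(\rho_n)+p(\rho_n)/\mu(\rho_n)$ (again via the product identity) yields $\partial_x u=\langle1/\mu\rangle\,z+\langle p/\mu\rangle$, i.e. $z=m\,\partial_x u-\pi$ with $m=\langle1/\mu\rangle^{-1}$ and $\pi=m\langle p/\mu\rangle$.

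The crux, and where I expect the main obstacle to lie, is showing that $\nu_{t,x}$ stays a convex combination of $k$ Dirac masses. Viewed in $(x,\xi)$, the field $(u,\mathcal A)$ is $C^1$ in $\xi$ on the invariant range $[\tfrac1{2C_0},2C_0]$ (here the lower bound \eqref{eq_assummu} on $\mu$ enters), has a spatial Sobolev derivative (since $\partial_x u\in L^\infty(L^2_\sharp)$ and $\partial_x z\in L^2(L^2_\sharp)$ give $\partial_x\mathcal A\in L^2(L^2_\sharp)$), and bounded divergence $\partial_x u+\partial_\xi\mathcal A$, while the zeroth-order coefficient $g$ lies in $L^2_t H^1_x$ and is smooth in $\xi$. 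This places the transport equation within the DiPerna--Lions/Ambrosio theory, providing uniqueness of the bounded measure solution and a regular Lagrangian flow $(X,\Xi)(t;x,\xi)$ solving $\dot X=u(t,X)$, $\dot\Xi=\mathcal A(t,X,\Xi)$. Along a fixed characteristic the $\xi$-equation is an ODE with coefficients Lipschitz in $\xi$ and integrable in $t$, so $\xi\mapsto\Xi(t)$ is a strictly monotone bijection; hence the unique solution issued from $\nu^0=\sum_i\alpha^0_i\delta_{\rho^0_i}$ is, at each later time, the pushforward of these $k$ atoms (positions advected by $\Xi$, weights modulated by the scalar source $g$), again a sum of exactly $k$ Dirac masses with distinct, ordered positions. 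Labelling atoms by increasing size gives measurable $\rho_i(t,x)$ and $\alpha_i(t,x)\ge0$ with $\sum_i\alpha_i=1$, the stated $L^\infty$ bounds and time continuity, and
\[ \nu_{t,x}=\sum_{i=1}^k\alpha_i(t,x)\,\delta_{\rho_i(t,x)}\qquad\text{for a.e. }(t,x). \]
Running this uniqueness/flow argument at the available low regularity of $u$ and $z$ is the principal difficulty.

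Finally I would identify the system. Inserting $\nu=\sum_i\alpha_i\delta_{\rho_i}$ into the moment identity above and matching, for arbitrary $\beta$, the coefficients of $\beta(\rho_i)$ and of $\beta'(\rho_i)$ (legitimate since the $\rho_i$ are distinct) gives
\[ \partial_t\rho_i+u\,\partial_x\rho_i=-\frac{\rho_i}{\mu(\rho_i)}\bigl(z+p(\rho_i)\bigr),\qquad \partial_t\alpha_i+\partial_x(\alpha_i u)=\frac{\alpha_i}{\mu(\rho_i)}\bigl(z+p(\rho_i)\bigr). \]
Since $z=m\partial_x u-\pi$, one has $z+p(\rho_i)=f_i$ with $f_i,m,\pi$ exactly as announced, so these are the first two equations. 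For momentum I pass to the limit directly: $u_n\to u$ strongly gives $\rho_n u_n\rightharpoonup\rho\,u$ and $\rho_n u_n^2\rightharpoonup\rho\,u^2$ with $\rho=\langle\xi\rangle=\sum_i\alpha_i\rho_i$, while $\partial_x z_n\rightharpoonup\partial_x z=\partial_x(m\partial_x u-\pi)$, yielding $\partial_t(\rho u)+\partial_x(\rho u^2)=\partial_x(m\partial_x u-\pi)$. The initial conditions then follow from the $C([0,T_0];L^1_\sharp)$ time-continuity of $\alpha_i,\rho_i$ and the assumed convergence $\nu^0_n\to\nu^0=\sum_i\alpha^0_i\delta_{\rho^0_i}$, which pins down $\alpha_i(0,\cdot)=\alpha^0_i$, $\rho_i(0,\cdot)=\rho^0_i$ and $u(0,\cdot)=u^0$.
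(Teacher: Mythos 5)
Your first three steps --- the uniform bounds, the strong convergence of $u_n$ via Aubin--Lions, the product identity $\beta(\rho_n)z_n\rightharpoonup\beta^\infty z^\infty$, the kinetic equation for the limit Young measure, and the identification $z^\infty=m\,\partial_x u-\pi$ --- coincide in substance with the paper's proof (the paper proves the product identity by hand, introducing $w_n=\partial_x^{-1}[\beta(\rho_n)-\mathbb E[\beta(\rho_n)]]$ and exploiting its time-compactness, rather than quoting a time/space compensated-compactness lemma, but this is cosmetic). The genuine gap is exactly at the step you flag as the crux. You claim that Sobolev regularity in $x$, $C^1$ regularity in $\xi$ and bounded divergence place the kinetic equation ``within the DiPerna--Lions/Ambrosio theory, providing uniqueness of the bounded measure solution and a regular Lagrangian flow.'' This is not a valid invocation: DiPerna--Lions/Ambrosio well-posedness concerns solutions whose densities lie in $L^p$ (or $L^\infty$) with respect to Lebesgue measure, and the regular Lagrangian flow is unique only up to Lebesgue-negligible sets of initial points. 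Your solution $\nu$ is purely atomic in $\xi$, hence singular, and its initial atoms may precisely charge the null set on which the flow is not well defined; no uniqueness statement for such measure-valued solutions follows from that theory under the hypotheses you list. Since the whole propagation of the $k$-Dirac structure rests on this uniqueness claim, the argument as written does not close.

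What actually makes the step work --- and what the paper does in {\bf Appendix \ref{app_young}} --- is the anisotropic structure you only half-exploit: the $x$-component of the transport field is $u(t,x)$, independent of $\xi$, with $\partial_x u\in L^1(0,T_0;L^\infty_\sharp)$, whereas the $\xi$-component and the source are Lipschitz in $\xi$ with $L^1_t$ constants but have $x$-derivatives only in $L^1_{t,x}(L^\infty_\xi)$. The paper proves uniqueness of measure-valued solutions directly by a duality--regularization argument: mollify the coefficients, solve the backward adjoint transport equation classically, and control the commutator errors; the dangerous terms (the $\partial_x$-derivatives of the mollified $\xi$-velocity and source, which grow like $\varepsilon^{-1/2}$) are paired with $\|u^\varepsilon-u\|_{L^1_tL^\infty_x}=O(\varepsilon)$, which is available precisely because $\partial_\xi u=0$. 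Existence of a solution of the atomic form is then proved separately ({\bf Lemma \ref{lem_youngcons}}) by solving the coupled system for $(\alpha_i,\rho_i)$, and uniqueness identifies the limit measure with it --- which also sidesteps your final ``matching coefficients of $\beta(\rho_i)$ and $\beta'(\rho_i)$'' step, itself problematic since the theorem allows $\rho_i^0=\rho_j^0$ on sets of positive measure, where the atomic decomposition is not unique. If you wish to keep your Lagrangian route, you would need to (i) prove pointwise (not Lebesgue-a.e.) uniqueness of characteristics using the triangular structure ($x$-characteristics first, then the $\xi$-ODE along each of them, both Carath\'eodory--Lipschitz), and (ii) convert this into uniqueness of nonnegative measure-valued solutions, e.g.\ via a superposition principle adapted to the zeroth-order term $g\nu$; neither step is supplied in your proposal.
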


%The is organized as follows. In the next section, we derive formally a
%bifluid system using a WKB decomposition. Then we give the mathematical proofs
%of the Theorems: Existence of the suitable sequence under consideration and
%the  compactness argument  to derive the multifluid system.

%%%%%%%%% Formal derivation 

\section{Formal derivation for bifluid flows.}
In this part, we prove how to get the bifluid system using a formal
WKB decomposition. The reader interested by some formal 
papers related to heat-conducting case or to non-monotone pressure discussions
are referred to \cite{E} and \cite{Se1}.

We assume throughout this section that $(\rho,u)$ is a solution to the compressible Navier Stokes
system \eqref{eq_CNS1D1}-\eqref{eq_CNS1D2}-\eqref{eq_CNS1D3} given by the expansion \eqref{eq_expansion1}-\eqref{eq_expansion2} 
in which  \eqref{eq_expansionHypo} is satisfied.

\subsection{General setting}
Let us formally multiply the continuity equation by a function $\beta'$ such that:
$$
\beta= 1 \text{ on the support of $\rho^{0}_+$}, \qquad
\beta= 0 \text{ on the support of $\rho^{0}_-$}.
$$
We get the classical equation
$$
\partial_t \beta(\rho) + \partial_{x} ( \beta(\rho) u ) + ( \rho \beta'(\rho) - \beta(\rho)) \partial_x u  = 0.
$$
Replacing $\beta(\rho)$ by its value, we get the supplementary equation
\begin{equation}
\partial_{t} \theta_{+} + u \partial_{x} \theta_+  = 0.
\end{equation}
Then, we can decompose the derivatives in terms of the slow variables  $(t,x)$ and fast variables
 $(\tau,y)$ of $\theta_+.$ 
 We get two equations when we consider terms which are $O(1/\varepsilon)$ or terms
 which are $O(1)$: 
\begin{eqnarray} \label{eq_alpha-1}
\partial_{\tau} \theta_+ + u_0 \partial_{y} \theta_+ &=& 0 \, \\
\partial_{t} \theta_+ + u \partial_x \theta_+ &=& -\dfrac{(u - u_0)}{\varepsilon} \partial_y \theta_+. 
\end{eqnarray}
 The first equation provides the behavior of $\theta_+$ on a cell. This equation is consistent with the assumption that $\theta_+$ is an indicator function. Averaging with respect to
 the fast variable the second equation, we get the following PDE on the averaged quantity  $\alpha_+ = \overline{\theta_+}$ 
\begin{equation} \label{eq_alpha0}
 \partial_{t} \alpha_+  + \overline{u \partial_x \theta_+} = - \overline{\dfrac{(u - u_0)}{\varepsilon} \partial_y \theta_+}. 
\end{equation}
We denote temporarily with bars averages on a cell. As this lightens notations a lot, we keep this convention throughout this section only. 
However, it must not be confused with lower and upper bounds for densities as it has been used in the statement of our {\bf Theorem \ref{thm_existence1}} and as will be in the next section. 
Remark that there is no vacuum in the mixture so that $\theta_+ + \theta_- = 1$ a.e.. Consequently, we have:
$$
\alpha_+ + \alpha_- = 1.
$$

Choosing then $\beta'$ such that:
$$
\beta= \rho \text{ on the support of $\rho^{0}_+$}\,, \qquad
\beta= 0 \text{ on the support of $\rho^{0}_-$}\,.
$$
We obtain that 
$$
\partial_t (\theta_+ \rho^0_+) + \partial_x( \theta_+ \rho^0_+ u) = 0
$$
Keeping only the first order in $u$ and averaging with respect to fast variables, we obtain then:
\begin{equation} \label{eq_alpha2}
\partial_t (\alpha_+ \rho^0_+) + \partial_x (\rho^0_+\overline{\theta_+ u_0} ) = 0
\end{equation}

Now the main objective is to calculate the averaged terms in \eqref{eq_alpha0}-\eqref{eq_alpha2} and obtain the momentum equation. To proceed, 
we use the other equation. We distinguish two cases: the constant viscosity case
which gives at the end the system which has been justified recently in \cite{Se} (generalized 
to the multi-dimensioncal case in  \cite{BrHi})  and the density-dependent viscosity case which 
gives at the end the homogenized system under consideration in this paper.

\subsection{The constant viscosity case}
Plugging the expansion of $u$ in  \eqref{eq_alpha0}, we obtain at first order in $\varepsilon$:
\begin{equation} \label{eq_alpha01}
 \partial_{t} \alpha_+ + \overline{u_0 \partial_x \theta_+} =  -\overline{u_1 \partial_y \theta_+}. 
\end{equation}
It remains then to compute the averaged quantity on the right-hand side and to justify the homogenized momentum equation.
Let us recall quickly the different steps to get the limit system which asks to interprete the divergent
parts (in  $\varepsilon$) of the momentum equation. Indeed, we get the following cascade of equations:

\noindent{\bf At order $\varepsilon^{-2},$} we get:
$$
\partial_{yy} u_0  = 0.
$$
The velocity field $u_0$ is therefore independent of the fast variable $y.$ 

\noindent {\bf At order $\varepsilon^{-1},$} we get then:
\begin{equation} \label{eq_nvonumero}
\rho^0 \left(  \partial_{\tau} u_0  + u_0 \partial_y  u_0 \right) = 2 \mu \partial_{xy} u_0  + \mu \partial_{yy} u_1  - \partial_y p^0.
\end{equation}
(we denote $\rho^0 = \theta_+ \rho_+^0 + \theta_-\rho_-^0$ and for the pressure: $p^0 = \theta_+ p_+^0 + \theta_-p_-^0$).
As $u_0$ does not depend on $y$ (and $\rho^0$ remains far from $0$), multiplying this equation by  $\partial_{\tau} u_0$ and integrating on a cell, we obtain $\partial_{\tau} u_0 = 0$. Therefore $u_0$ does not depend on both fast variables. In particular 
$$
\overline{u_0 \partial_x \theta_+}  = u_0 \partial_x \alpha_+\,, \quad \overline{\theta_+ u_0} = \alpha_+ u_0,
$$
and \eqref{eq_alpha2} rewrites:
\begin{equation} \label{eq_alpha22}
\partial_t (\alpha_+ \rho^0_+) + \partial_x (\alpha_+ \rho^0_+ u_0) = 0.
\end{equation}
Using  then that $u_0$ does not depend on the fast variables in \eqref{eq_nvonumero} gives, because $\mu$ is constant:
$$
\mu \partial_{yy} u_1 - \partial_y p^0 = 0, \quad \text{ and then }, \quad \mu \partial_y u_1 = p^0 - \overline{p^0}. 
$$
Multiplying this identity by $\theta_+$ and taking the average (we recall that
 $p_+^0 = p(\rho_+^0)$ and $p_-^0 = p(\rho_-^0)$ do not dependent of
 the fast variable), we get 
$$
-\overline{u_1 \partial_y \theta_+} =  \overline{\theta_+ \partial_y u_1} = \dfrac{1}{\mu} \overline{\theta_+ (p^0- \overline{p^0})} = \dfrac{\alpha_+ \alpha_-}{\mu} (p_+^0 - p^0_-).
$$
Finally, we obtain the expected equation for the volume fraction:
\begin{equation} \label{eq_alpha-12}
 \partial_{t} \alpha_+ + {u_0 \partial_x \theta_+} =   \dfrac{\alpha_+ \alpha_-}{\mu} (p_+^0 - p^0_-).  
\end{equation}

\noindent  {\bf At order $\varepsilon^{0}$,} in the momentum equation, we have now:
\begin{equation} \label{eq_momentum0}
\rho^0 \partial_t u_0 + \rho^0 u_0 \partial_x u_0 + \rho^0 \left( \partial_{\tau} u_1 + u_0 \partial_y u_1 \right)
= \partial_{x} \Sigma_0 + \partial_y \Sigma_1
\end{equation}
where 
$$
\Sigma_0 = \mu (\partial_x u_0 +  \partial_y u_1 ) - \sum_{i=+,-} \theta_i p(\rho^0_i)\,. 
$$
On the left-hand side, we recall that 
$$
\partial_y u_1 =  \dfrac{1}{\mu} \left( \sum_{i=+,-} \theta_i p^0_i - \overline{p}^0 \right) 
$$
and, in terms of the fast variables $(\tau,y)$, $\partial_y u_1$ is thus a linear function of $\theta_{\pm}$ only so that \eqref{eq_alpha-1} induces that 
$$
\partial_y (\partial_{\tau} u_1 + u_0 \partial_y u_1) = 0
$$ 
and consequently (because $\partial_{\tau} u_1 + u_0 \partial_y u_1$ has average $0$ on a cell):
$$
\partial_{\tau} u_1 + u_0 \partial_y u_1 = 0.
$$
Taking the average of \eqref{eq_momentum0} w.r.t fast variables, we obtain finally:
$$
\overline{\rho} \partial_t u_0 + \overline{\rho} u_0 \partial_x u_0 = \partial_{x} \overline{\Sigma}_0 
$$
with 
$$
\overline{\rho} = \alpha_+ \rho_+^0 + \alpha_- \rho_-^0\,, \quad \overline{\Sigma}_0 = \mu \partial_x u_0  - \sum_{i=+,-} \alpha_i p_i^0\,.
$$
Combining with \eqref{eq_alpha22}-\eqref{eq_alpha-12}, this completes the justification of the bifluid system in the constant-viscosity case.

\subsection{The density-dependent viscosity case} 
In this second case, we go back to the
relation 
\begin{equation} \label{eq_alpha01_mu}
 \partial_{t} {\theta}_+ + {u_0 \partial_x \theta_+} =  -{u_1 \partial_y \theta_+}. 
\end{equation}
that we want to average. We write again the different scales on the momentum equations.
We recall that we assume density-dependent viscosity $\mu = \mu(\rho)$. Therefore we can write 
$$
\mu = \theta_+ \mu_+^{\varepsilon} + \theta_- \mu_-^{\varepsilon}
$$
where we assume at first order that $\mu^{\varepsilon}_{\pm} \sim \mu^{0}_{\pm}$  which does not
depend on the fast variables.

\noindent {\bf Order $\varepsilon^{-2}$.} We get
$$
\partial_{y} [\mu \partial_{y} u_0 ]= 0
$$
This implies that 
$$
\mu \partial_y u_0 = K_0.
$$
To determine $K_0$ we use the equation  at order $\varepsilon^{-1}$ for $\theta_+$ (and $\theta_-$) \eqref{eq_alpha-1} that we multiply by  $\mu_+^{\varepsilon}$ (et $\mu_-^{\varepsilon}$ respectively). 
 After some combinations, we get:
$$
\partial_{\tau} \mu + u_0 \partial_y \mu  =0
$$
Averaging the equation on a cell, we get that 
$$
0 = \overline{u_0 \partial_y \mu } = - \overline{\mu \partial_y u_0} = K_0 
$$
Finally, we get that  $\partial_y u_0 = 0$ and therefore  $u_0$ does not depend of the space fast variables. 

\noindent {\bf Order $\varepsilon^{-1}$.}  We get with the same arguments as previously
$$ \rho^0 \partial_\tau u_0 = \partial_{y}[\mu^{0}(\partial_y u_1+\partial_x u_0)]  - \partial_y p^0$$
and therefore, because  $\partial_\tau u_0$ is constant, we obtain $\partial_{\tau} u_0=0$ after multiplication by $\partial_{\tau} u_0$
and integration on a cell. Hence, $u_0$ does not depend on both fast variables again and we obtain \eqref{eq_alpha22}. Also, the above equation then reduces to:
$$
0 = \partial_{y}[\mu^{0}(\partial_y u_1+\partial_x u_0)]  - \partial_y p^0.
$$
This gives 
\begin{equation} \label{eq_u12}
\mu^{0} \partial_y u_1 + (\mu^{0} - \overline{\mu^0})\partial_x u_0 - (p^0 - \overline{p}^0) = K_1
\end{equation}
Let us note that at first order $K_1 = \overline{\mu^0 \partial_y u_1}.$ 
We want to calculate this quantity. To calculate  $K_1,$ we proceed as previously, we multiply  \eqref{eq_alpha01_mu} by $\mu_+^{0}$ (and its equivalent for  $\theta_-$ by $\mu_{-}^{0}$). 
After some combinations, this gives:
$$
 \partial_{t} \mu^{0}  + {u_0 \partial_x \mu^{0}} + {u_1 \partial_y \mu^{0}}  = \sum_{i=\pm} \theta_i \left(\partial_t + u_0 \partial_x \right) \mu^0_i .
$$
Averaging with respect to the fast variable, we get
\begin{eqnarray*}
\overline{u_1 \partial_y \mu^{0}} &=&  \sum_{i=\pm} \alpha_i \left(\partial_t + u_0 \partial_x \right) \mu^0_i -   \left(\partial_t + u_0 \partial_x \right) \overline{\mu^0} \\
&=& - \sum_{i=\pm} \mu_i^0 \left( \partial_t + u_0 \partial_x \right) \alpha_i \\
&=& (\mu^0_- - \mu^0_+) \left( \partial_t + u_0 \partial_x \right) \alpha_+ 
\end{eqnarray*}
and finally
\begin{equation} \label{eq_dyu1average}
K_1 = \overline{\mu^{0} \partial_y u_1 } = (\mu^0_+ - \mu^0_-) \left( \partial_t + u_0 \partial_x \right) \alpha_+ 
\end{equation}

Thus we can calculate at first order:
\begin{eqnarray*}
- \overline{u_1 \partial_y \theta_+} &= & \overline{\theta_+ \partial_y u_1} \\
		&=&   \overline{      \dfrac{\theta_+ }{\mu^0} (p^0 - \overline{p^0}) - \dfrac{\theta_+ }{\mu^0}(\mu^0 - \overline{\mu^0}) \partial_x u_0 +  \dfrac{\theta_+}{\mu^0} K_1 } .
\end{eqnarray*}
Then we have 
\begin{eqnarray*}
\overline{ \dfrac{\theta_+ }{\mu^0} (p^0 - \bar{p}^0) } &=& \dfrac{\alpha_+}{{\mu}^0_+} p^0_+ - \dfrac{\alpha_+}{\mu^0_+} \left(\alpha_+ p_+^0  + \alpha_- p_-^0 \right) \\  
&=& \dfrac{\alpha_+ \alpha_-}{\mu^0_+} (p^0_+ - p^0_-),
\end{eqnarray*}
and on the other part
$$
\overline{ \dfrac{\theta_+}{\mu^0} K_1 } = \dfrac{\alpha_+}{\mu^0_+} (\mu^0_+ - \mu^0_-) \left( \partial_t + u_0 \partial_x \right) \alpha_+ ,
$$
and finally : 
$$
 \overline{      \dfrac{\theta_+ }{\mu^{0}} (\mu^0 - \bar{\mu}^0) \partial_x u_0 } = \partial_x u_0 \dfrac{\alpha_+ \alpha_-}{\mu_+^0}   (\mu^0_+ - \mu^0_-).
$$
Thus
$$
- \overline{u_1 \partial_y \theta_+}  = \dfrac{\alpha_+ \alpha_-}{\mu^0_+} \left( (p^0_+ - p^0_-) - \partial_x u_0 (\mu^0_+ - \mu^0_-) \right) +  \alpha_+ \left(  1- \dfrac{\mu^0_-}{\mu_+^0} \right) \left( \partial_t + u_0 \partial_x \right) \alpha_+ 
$$
Therefore we get finally the following equation on  $\alpha_+:$
\begin{equation}
\left(1 + {\alpha}_+ \left(\dfrac{\mu_-^0}{\mu_+^0} -1 \right) \right) \left( \partial_t {\alpha}_+ + u_0 \partial_x {\alpha}_+ \right) = \dfrac{{\alpha}_+ {\alpha}_-}{\mu_+^0} \left(  (p_+^0 - p_-^0) - \partial_x u_0 (\mu_+^0 - \mu_-^0)\right)
\end{equation}
which may be rewritten as:
\begin{equation} \label{eq_alpha23}
\partial_t \alpha_+ + u_0 \partial_x \alpha_+ = \dfrac{\alpha_+ \alpha_-}{\alpha_+ \mu_-^0  + \alpha_- \mu_+^0} \left(  (p_+^0 - p_-^0 )- \partial_x u_0 (\mu_+^0 - \mu_-^0)\right).
\end{equation}
   As for the momentum equation, we write the $\varepsilon^{0}$ order of the momentum equation as in the previous case. We remark again that, thanks to \eqref{eq_u12}, the quantity $\partial_{y}u_1$ depends on the fast variable only through $\theta_{\pm}$ so that after averaging, we obtain:
$$
\overline{\rho} \partial_t u_0 + \overline{\rho} u_0 \partial_x u_0 = \partial_{x} \overline{\Sigma}_0 
$$
with 
$$
\overline{\rho} = \alpha_+ \rho_+^0 + \alpha_- \rho_-^0\,, \quad \overline{\Sigma}_0 = \mu^0 \partial_x u_0  - \sum_{i=+,-} \alpha_i p_i^0\, + \overline{\mu^0 \partial_y u_1}.
$$
Combining \eqref{eq_dyu1average} and \eqref{eq_alpha23}, we have:
$$
\overline{\mu^0 \partial_y u_1} = \dfrac{\alpha_+ \alpha_- (\mu_+^0 - \mu_-^0)}{\alpha_+ \mu_-^0  + \alpha_- \mu_+^0} \left(  (p_+^0 - p_-^0 )- \partial_x u_0 (\mu_+^0 - \mu_-^0)\right),
$$
so that, after tedious but straightfoward algebraic combination (using many times that $\alpha_++ \alpha_- = 1$), we get:
$$
\overline{\Sigma}_0 = \dfrac{\mu^0_+ \mu^0_-}{\alpha_+ \mu_-^0 + \alpha_- \mu_+^0 } \partial_x u_0 - \dfrac{\alpha_+ p_+^0 \mu_-^0 + \alpha_- p_-^0 \mu_+^0}{\alpha_+ \mu_-^0 + \alpha_- \mu_+^0 }.
$$
This completes the justification of the bifluid system mentioned in the introduction.

%%%%%%%%%% MATHEMATICAL PROOFS 

\section{Mathematical proofs.}
   In this section, we mathematically justify the derivation of a multifluid system with variable viscosities from the compressible Navier--Stokes system with a  density-dependent viscosity.
   We consider the one-dimensional in space case to be able to construct global strong solutions
 far from vacuum in the classical setting. We therefore recall and make precise the result of existence and method of proof
 coming from \cite{MeVa} and recently \cite{Ha}. Then,  we perform the compactness  result and derive the multifluid system.
  
\subsection{Strong solution theory}
By adapting the arguments of \cite{MeVa} to our periodic framework,  we have the following existence theorem
\begin{theorem} \label{thm_strsol}
Given $\rho^0 \in H^1_{\sharp}$ and $u^0 \in H^1_{\sharp}$ satisfying
$$
\underline{\rho^0} := \inf \rho^0(x) >0,
$$
there exists a unique pair $(\rho,u)$ such that:
\begin{itemize}
\item[$\mathbf{(CS)}_a$] we have the regularity statement 
\begin{align}
 & \rho \in C([0,\infty) ; H^1_{\sharp}) \text{ with } \rho > 0\,,  \\
 & u \in C([0,\infty);H^{1}_{\sharp})  \cap L^2_{loc} ((0,\infty) ; H^2_{\sharp}) \,;     
 \end{align}
\item[$\mathbf{(CS)}_b$] $(\rho,u)$ satisfies \eqref{eq_NSC} a.e. in $(0,\infty) \times \mathbb R$ with $p,\mu$ given 
by \eqref{eq_ee} ;
\item[$\mathbf{(CS)}_c$] $(\rho,u)$ matches initial conditions \eqref{eq_id} a.e..
\end{itemize} 
\end{theorem}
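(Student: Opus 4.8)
The plan is to follow the by-now classical program for one-dimensional barotropic Navier--Stokes with degenerate viscosity, adapting the Eulerian analysis of \cite{MeVa,Ha} to the periodic cell. Since $\rho^0 \in H^1_\sharp$ with $\inf\rho^0>0$, the initial viscosity $\mu(\rho^0)$ is bounded above and below, so \eqref{eq_NSC} is, for short time, a uniformly parabolic equation for $u$ coupled to a transport equation for $\rho$. First I would produce a \emph{local} strong solution on a maximal interval $[0,T_\ast)$, say by a linearize-and-iterate contraction (freezing $\rho$ and $\mu(\rho)$ in the momentum equation, solving the resulting linear parabolic problem for $u$, then transporting $\rho$ by the continuity equation, and closing a fixed point in $[C([0,\tau];H^1_\sharp)]^2$ for $\tau$ small), or equivalently by a parabolic regularization $\partial_t\rho + \partial_x(\rho u) = \eta\,\partial_{xx}\rho$ with $\eta\to 0$. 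The periodic framework only simplifies this step, as there are no boundary terms and the total mass $\int_0^L\rho$ is conserved.

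The core of the argument is to upgrade the local solution to a global one by establishing a priori estimates that rule out blow-up; the decisive ones are two-sided pointwise bounds $0<\underline\rho(T)\le\rho(t,x)\le\overline\rho(T)<\infty$ on each $[0,T]$. I would obtain these from two companion identities. The basic energy estimate \eqref{eq_regbthm} controls $\int_0^L(\tfrac12\rho u^2 + q(\rho))$ together with the dissipation $\int_0^T\!\!\int_0^L \mu(\rho)|\partial_x u|^2$. For the extra regularity I would use the Bresch--Desjardins entropy: testing the momentum equation by the effective velocity $v := u + \partial_x\varphi(\rho)$, with $\varphi'(\rho) = \mu(\rho)/\rho^2$, turns the left-hand side into $\rho\,(\partial_t + u\partial_x) v$, and pairing with $\partial_x p$ produces the good-sign term $\int \mu(\rho)\,p'(\rho)\,\rho^{-2}|\partial_x\rho|^2 \ge 0$ since $p'\ge 0$. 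This yields $\sup_t\int_0^L \rho\,|\partial_x\varphi(\rho)|^2 \le C(T)$, i.e. $L^\infty((0,T);L^2_\sharp)$ control of $\partial_x g(\rho)$ for the function $g$ with $g'(\rho)=\mu(\rho)/\rho^{3/2}$.

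It is precisely here that the growth hypothesis \eqref{eq_assummu} enters: from $\mu(s)\ge\mu^0(1+\sqrt s)$ one gets $g'(\rho)\ge\mu^0\rho^{-3/2}$ near $\rho=0$ and $g'(\rho)\ge\mu^0\rho^{-1}$ near $\rho=+\infty$, so $g$ is a homeomorphism of $(0,\infty)$ onto $\mathbb{R}$, blowing up at both ends. Combining the $L^2_\sharp$ control of $\partial_x g(\rho)$ with the bound on $\int_0^L g(\rho)$ coming from conserved mass and the energy, the one-dimensional embedding $H^1_\sharp \hookrightarrow L^\infty_\sharp$ gives $g(\rho)\in L^\infty((0,T)\times(0,L))$, hence the claimed two-sided bounds on $\rho$. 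Once $\rho$ is trapped in a compact subinterval of $(0,\infty)$, the viscosity is uniformly elliptic and all nonlinearities are Lipschitz in the relevant range, so the remaining regularity is routine: parabolic estimates promote $u$ to $L^2_{loc}((0,\infty);H^2_\sharp)$, while differentiating the continuity equation and running a Gr\"onwall estimate on $\|\partial_x\rho(t,\cdot)\|_{L^2_\sharp}$ propagates $\rho\in C([0,T];H^1_\sharp)$ with $\rho>0$. These bounds being finite on every $[0,T]$ contradict finiteness of $T_\ast$ through the continuation criterion, giving the solution on $[0,\infty)$. Uniqueness then follows from a standard energy estimate on the difference of two solutions with the same data, closed by Gr\"onwall using the two-sided density bounds and the established regularity.

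I expect the exclusion of vacuum --- the lower bound on $\rho$ --- to be the main obstacle, and the one place where the degenerate structure is genuinely needed: it is the behaviour $g'(\rho)\sim\mu(\rho)/\rho^{3/2}$ near $\rho=0$, guaranteed by \eqref{eq_assummu}, that prevents the density from touching zero, and the whole global theory rests on this mechanism.
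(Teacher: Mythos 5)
Your proposal follows essentially the same route as the paper: local existence by a fixed-point argument, then global a priori bounds via the energy identity plus the Bresch--Desjardins entropy with effective velocity $u+\partial_x\varphi(\rho)$, $\varphi'=\mu/\rho^2$, the good-sign term from $p'\geq 0$, and the two-sided density bounds obtained because the primitive of $z\mapsto\mu(z)/z^{3/2}$ diverges at both $0$ and $+\infty$ under \eqref{eq_assummu} --- exactly the paper's mechanism (the paper's function $f$ is your $g$). The only cosmetic difference is that the paper reads the $H^1$ bound on $\rho$ directly off the BD estimate \eqref{eq_BD2} once $\rho$ is trapped, and propagates $H^1$ regularity of $u$ by differentiating the momentum equation, rather than running your Gr\"onwall on the differentiated continuity equation; both are standard and interchangeable here.
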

We sketch the proof of this theorem for completeness.

Local existence of solutions is obtained by a classical fixed-point argument so that the only difficulty lies in proving these solutions are global. As the local-in-time theory
yields a time of existence depending only on 
$$
\mathcal E(0) := \underline{\rho^0} + \|\rho^0 \|_{H^1_{\sharp}} + \|u^0 \|_{H^1_{\sharp}},
$$
we aim to obtain a local-in-time uniform bound on $\mathcal E(t)$ for the associated solution $(\rho,u)$.
This solution is defined {\em a priori} on a non-extendable time interval $[0,T_*).$

{\bf Step 1. Dissipation of energy.} First, with classical arguments, we obtain 
\begin{multline} \label{eq_est0}
\int_{0}^L  \left[ \dfrac{\rho(t,x) |u(t,x)|^2   }{2}  {\rm d}x+  q(\rho(t,x)) \right] + \int_0^t\int_{0}^L \mu(t,x) |\partial_x u(s,x)|^2  {\rm d}x{\rm d}t  \\
= \int_{0}^L  \left[ \dfrac{\rho^0(x) |u^0(x)|^2   }{2}  {\rm d}x+  q(\rho^0(x)) \right]{\rm d}x
\end{multline}
for all $t \in [0,T_*)$ where $q$ is defined by:
\begin{equation} \label{eq_defq}
q(z) = z \partial_{s}^{-1}\left\{\dfrac{p(s)}{s^2} \right\}.  
\end{equation}

\bigskip

{\bf Step 2. BD entropy.} We control now the growth of the $H^1$-norm of $\rho.$ 
 Namely, we adapt to our periodic case the BD-entropy method which may be found
in its simplest form in \cite{BrDeEvian}  for instance. 
    So, we introduce  $\varphi \in C^{1}((0,\infty))$ defined by 
$$
\varphi(z) =  \int_{1}^{z} \dfrac{\mu(s)}{s^2} {\rm d}s \,, \quad \forall \, z \in (0,\infty)\,.
$$
Note that for a  nonlinear function $\varphi_1$ of the density, we have 
$$\partial_t  \varphi_1(\rho) +  u\partial_x \varphi_1(\rho)   + \varphi_1'(\rho)\rho  \partial_x u = 0.$$
Thus differentiating with respect to space    
$$\partial_t  \partial_x(\varphi_1(\rho)) +  
\partial_x(u\partial_x \varphi_1(\rho) )  +  
\partial_x(\varphi_1'(\rho)\rho  \partial_x u )= 0.$$                                
Let us now choose 
$\varphi_1(\rho) = \int_1^\rho  \mu(\rho)/\rho $, then we get
from the definition of $\varphi$
$$\partial_t  (\rho \partial_x\varphi(\rho)) +  
\partial_x(\rho u\partial_x \varphi(\rho) )  +  
\partial_x(\mu(\rho)  \partial_x u )= 0.$$  
Adding the relation to the momentum equation gives
\begin{equation}\label{eqq}
\partial_t  (\rho (u+ \partial_x\varphi(\rho) ))+  
\partial_x(\rho u (u+ \partial_x \varphi(\rho)) )  +  
\partial_x p(\rho) = 0.
\end{equation} 
%  We note that, due to our assumption on $\mu,$ there holds:
%$$
%|\varphi(z)| \geq \dfrac{\mu^0}{z} \quad \forall \, z << 1 \,, \qquad |\varphi(z)| \geq \dfrac{\mu_0}{z^{\frac 12}} \quad \forall \, z >> 1.
%$$
   In what follows, we denote $\varphi_x := \partial_x \varphi(\rho(x))$ to be distinguished with $z \to \varphi'(z)$ the simple
derivative of the above defined function $\varphi.$ We keep subscript $x$ to denote partial derivatives w.r.t. space variable (we have thus $\partial_x u = u_x$).  
Testing the equation \eqref{eqq} with $u+\partial_x \varphi$ yields finally:
\begin{equation} \label{eq_borneBD}
\dfrac 12 \dfrac{\textrm d}{\textrm{d}t} \left[ \int_{0}^L \left\{ \rho \left| u + \varphi_x \right|^2 +  q(\rho) \right\} \right] + \int_{0}^L p'\varphi' |\rho_x|^2  
=0\,.
\end{equation}
As $p'\varphi' \geq 0$ we conclude that  
$$
\int_{0}^L \left\{ \rho \left| u + \varphi_x \right|^2 +  q(\rho) \right\} \leq C_0\,, \quad \forall \, t \geq 0\,.
$$
Hence:
\begin{equation} \label{eq_BD2}
\int_0^L |\sqrt{\rho} \varphi'(\rho) \rho_x|^2 \leq C_0\,, \quad \forall \, t \geq 0. 
\end{equation}
As the continuity equation implies the conservation of the mean of $\rho$ on $(0,L)$ we derive that, setting
$f \in C^1((0,\infty))$ any primitive of $z \mapsto \mu(z)/z^{3/2}$, there holds:
$$
\|f(\rho(t,\cdot))\|_{ L^{\infty}_{\sharp}} \leq C_0\,, \quad \forall \, t \geq 0\,.
$$ 
In particular, our assumption \eqref{eq_assummu} on $\mu$  enforces that $f(z)$ diverges when $z \to 0$ or $z \to \infty.$
Hence, we obtain from the control above that
\begin{equation} \label{eq_BD3}
\|\rho(t,\cdot) \|_{L^{\infty}_{\sharp}} + \|\rho^{-1}(t,\cdot) \|_{L^{\infty}_{\sharp}}\leq C_0\,, \quad \forall \, t \geq 0\,,
\end{equation}
and, plugging this inequality into \eqref{eq_BD2} (and applying again that the mean of $\rho$ is constant with time so that
the $\|\partial_x \rho\|_{L^2_{\sharp}}$ controls the $H^1$-norm of $\rho$), we get:
\begin{equation} \label{eq_BD4}
\|\rho(t,\cdot) \|_{H^1_{\sharp}} \leq C_0\,, \quad \forall \, t \geq 0 \,.
\end{equation}
From the BD-entropy argument we developed up to now, we obtain global-in-time control on the $\rho$
in the $H^1$-norm and in the $L^{\infty}$-norm from above and from below.

\noindent{\bf Remarks.}

\begin{enumerate}
\item[{\em 1.}] In case $p$ merely satisfies \eqref{eq_p2}, equation \eqref{eq_borneBD} induces that for a constant $C_{p\mu} >0$ there holds:
\begin{equation} 
\dfrac 12 \dfrac{\textrm d}{\textrm{d}t} \left[ \int_{0}^L \left\{ \rho \left| u + \varphi_x \right|^2 +  q(\rho) \right\} \right]  \leq  C_{p\mu} \int_{0}^L \rho|\varphi_x|^2  \,.
\end{equation}
Hence, recalling that the total energy of the solution remains uniformy bounded with time, we obtain, by applying a Gronwall lemma, that there exists
a positive constant $C_0$ depending only on initial data, for which:
$$
\int_{0}^L \left\{ \rho \left| u + \varphi_x \right|^2 +  q(\rho) \right\} \leq C_0C_{p\mu}(1+t)\exp(2C_{p\mu}t) \quad \forall \, t \geq 0\,.
$$

\item[{\em 2.}]  Recently, B. {\sc Haspot} has extended the range of viscosity
that provides global existence of strong solution for the compressible Navier-Stokes equation with
density-dependent viscosity if initially the density is far from vacuum.
 His nice idea is to remark that the equation on $v= u+\partial_x \varphi(\rho)$ contains a damping term if we replace  the pressure term in terms of the $v$ and $u$.
More precisely, we get the equation
\begin{equation}\label{ree}
\partial_t  (\rho v)+  
\partial_x(\rho u v  )  +  \frac{p'(\rho)\rho^2}{\mu(\rho)} v = \frac{p'(\rho)\rho^2}{\mu(\rho)} u.
\end{equation} 
Thus for $p(s)=a s^\gamma$ ($\gamma>1$) if we assume $\mu(s) \le C + C p(s)$ for all $s\ge 0$ then he first proves that $v$ is $L^\infty (0,T;L^\infty)$ and then coming back to the
mass equation that $1/\rho$ belongs to $L^\infty$. This allows him to extend a local in time
result to a global one. In conclusion, our homogenized result may for instance be extended to the shallow-water  system where $\mu(\rho)= \rho$ and $p(\rho) = a\rho^2$.
\end{enumerate}

\bigskip

{\bf Step 3. Regularity.} Finally, we obtain propagation of the $H^1$ regularity for $u.$ Namely, we differentiate
once the momentum equation, yielding:
$$
\rho (\partial_t u_x + u \partial_x u_x ) = \partial_{x} \left( \mu \partial_x u_x  \right) - p_{xx} - \rho_x \left( \partial_t u + u \partial_x u\right) - \rho |u_{x}|^2 + \partial_x(\mu_x u_x).
$$
Multiplying this equality with $u_x$  yields:
$$
\dfrac 12 \dfrac{\textrm{d}}{\textrm{d}t} \left[\int_{0}^L \rho |u_x|^2 \right] + \int_0^{L} \mu |\partial_x u_{x}|^2
=  - \int_{0}^L  \left(p_{xx} + \rho_x \left( \partial_t u + u \partial_x u\right) + \rho |u_{x}|^2 \right) u_x -  \int_{0}^L \mu_x \partial_x u_x u_x .
$$
On the right-hand side, we have after integration by parts:
$$
\left| \int_0^L  p_{xx} u_x \right| \leq \dfrac{C}{\mu^0 \varepsilon} \|p \|_{H^1(0,L)}^2 + \varepsilon \int_0^L \mu |\partial_x u_x|^2. 
$$
Then, we replace 
$$
\rho_x \left( \partial_t u + u \partial_x u\right)  = \dfrac{\rho_x}{\rho} \left[ \mu u_{xx} + \mu_x u_x  - p_x \right]
$$
so that:
\begin{eqnarray*}
\left|\int_{0}^L   \rho_x \left( \partial_t u + u \partial_x u\right)  u_x \right|  & \leq  & 
   \|\mu\|_{ L^{\infty}_{\sharp}}^{\frac 12} \|\rho^{-1} \|_{L^{\infty}_{\sharp}} \|\rho\|_{ H^1_{\sharp}}
    \|u_x \|_{ L^{\infty}_{\sharp}} \left( \int_0^L \mu |u_{xx}|^2 \right)^{\frac 12} \\
&& + \|\rho^{-1} \|_{L^{\infty}_{\sharp}}
\|\rho\|_{H^1_{\sharp}} \|\mu_x \|_{L^2_{\sharp}} 
 \|u_x \|^2_{L^{\infty}_{\sharp}} \\
&&+  \|\rho^{-1} \|_{L^{\infty}_{\sharp}}
\|\rho \|_{H^1_{\sharp}}  
\|p_x\|_{L^{2}_{\sharp}} \|u_x\|_{L^{\infty}_{\sharp}}\\
& \leq  &  C_0 \left( 1 + \|u_x \|_{L^2_{\sharp}}^2 \right) + \varepsilon \int_0^L \mu |u_{xx}|^2 
\end{eqnarray*}
where we applied the previous controls on $\rho$ in the $H^1$ and $L^{\infty}$ norms, and that, for an absolute constant $C$, there holds:
$$
\|u_x \|_{L^{\infty}_{\sharp}} \leq C \|u_x \|_{L^{2}_{\sharp}}^{\frac 12}\|u_{xx}\|_{L^{2}_{\sharp}}^{\frac 12}\,.
$$
We have similarly:
$$
\left| \int_0^L  \rho |u_x|^2 u_x \right| \leq \dfrac{C_0}{2\varepsilon}(1+ \|u_x \|_{L^2_{\sharp}}^4) + \varepsilon   \int_0^L \mu |u_{xx}|^2.
$$
and 
$$
\left| \int_0^L  \mu_x u_x \partial_x u_x \right| \leq \dfrac{C_0}{\varepsilon} \|u_x \|_{L^{2}_{\sharp}}^{2} + \varepsilon   \int_0^L \mu |u_{xx}|^2
$$
Combining all these computations in our first identity, and choosing $\varepsilon$ sufficiently small, yields:
$$
\dfrac 12 \dfrac{\textrm{d}}{\textrm{d}t} \left[\int_{0}^L \rho |u_x|^2 \right] + \frac 12 \int_0^{L} \mu |\partial_x u_{x}|^2
=  C_0 \left( 1 + \|u_x \|^4_{L^2_{\sharp}} \right)
$$
Applying a standard Gronwall inequality and recalling the dissipation of energy estimate, we obtain then that:
$$
\int_{0}^L |u_x(t,x)|^2{\rm d}x \leq C_0 (1+t) \exp(C_0)\,, \quad \forall \, t \geq 0
$$
This ends the proof.

\subsection{Uniform estimates}
Prior to establishing {\bf Theorem \ref{thm_existence1}}, we show in this section that the global strong solutions of the previous section, that we construct  for initial data $(\rho^0,u^0) \in H^1_{\sharp} \times H^1_{\sharp},$ 
do satisfy the requirements ${\textbf{(HDS)}}_a,$  ${\textbf{(HDS)}}_b$ and ${\textbf{(HDS)}}_c$ of {\bf Theorem \ref{thm_existence1}} on some time interval $(0,T_0)$ with $T_0$ depending only on $\underline{\rho^0},\overline{\rho^0},$
$\|u^0\|_{H^1_{\sharp}}.$ This completes the proof of {\bf Theorem \ref{thm_existence1}} in the case $\rho^0$ satisfies
the further property $\rho^0 \in H^1_{\sharp}.$

So, let $(\rho^0,u^0) \in H^1_{\sharp} \times H^1_{\sharp}$ and $(\rho,u)$ the associated global strong solution 
given by {\bf Theorem \ref{thm_strsol}}. Clearly, ${\textbf{(CS)}}_a$ (resp. ${\textbf{(CS)}}_b$) induces that ${\textbf{(HDS)}}_a$  (resp. ${\textbf{(HDS)}}_b$) holds on arbitrary time-interval $(0,T_0)$. 
We remind also that this solution satisfies the dissipation energy estimate
\eqref{eq_est0}. Hence, denoting by
$$
\mathcal E^c_0 := \int_{0}^L  \left[ \dfrac{\overline{\rho^0} |u^0(x)|^2   }{2}  {\rm d}x+  \max_{[\underline{\rho^0},\overline{\rho^0}]} q(z) \right]{\rm d}x
$$
we have that, for arbitrary $T_0>0$ :
\begin{equation} \label{eq_ec0}
\sup_{t \in (0,T_0)} \left[  \dfrac{1}{2}\int_0^L \rho(t,x) |u(t,x)|^2 {\rm d}x + \int_0^{t} \int_0^L \mu(s,x)|\partial_x u(s,x)|{\rm d}s {\rm d}x \right] 
\leq  \mathcal E_c^0. 
\end{equation}
The only point is thus to obtain the bounds  \eqref{eq_regathm} and \eqref{eq_regcthm}. 
Note also that thanks to the regularity ${\textbf{(CS)}}_a$, these conditions are indeed satisfied but for a sufficiently small $\tilde{T}_0$ only.  The actual difficulty is thus to prove that we may choose $\tilde{T}_0 = T_0$ depending only on $\underline{\rho^0},\overline{\rho^0},\|u^0\|_{H^1_{\sharp}}$.   For this purpose, in what follows, we pick a positive time $\tilde{T}_0$ for which \eqref{eq_regathm}
and \eqref{eq_regcthm} are satisfied by $(\rho,u)$ on  $[0,\tilde{T}_0]$ for a well chosen $K^0.$ We show then, that, if we assume $\tilde{T}_0 < T_0,$ for some $T_0$ to be constructed with the expected dependencies, we obtain a better bound for $(\rho,u).$ By a standard connectedness argument\footnote{
Given the regularity statements $\textbf{(CS)}_{a}$  the following quantities are continuous functions of time-variable $t \in [0,\infty)$:
$$
\min_{[0,L]} \rho(t,x)\,, \quad \max_{[0,L]} \rho(t,x)\,,  \quad \int_0^t \|\partial_x z \|_{L^2_{\sharp}}^{2} \,, \quad \sup_{(0,t)} \|u\|_{H_{\sharp}^1}
$$    }, we obtain then that  we may choose $\tilde{T}_0 = T_0.$  

For the computations below, we introduce the following notations:
\begin{itemize}
\item we introduce the function $\kappa =p/\mu$\\
\item  given $\beta \in C([0,\infty)),$ we denote
$$
K^0_{\beta} = \max \{ \beta(z), z \in [\underline{\rho^0}/2,2\overline{\rho^0}] \}
$$
 \item $K^0_u =  36 \left( \dfrac{1}{\mu^0} +  \overline{\rho^0} \right)   \left[ \|\sqrt{\mu(\rho^0)} \partial_x u _0- \kappa(\rho^0) \|_{L^2_{\sharp}}^2 + 1 + L|K^0_{\kappa}|^2\right].$
\end{itemize}
We remark that $K^0_u$ do depend only on 
$\overline{\rho^0},\underline{\rho^0},\|u^0\|_{H^1_{\sharp}}.$ It will play the role of $K^0$ in our proof.

According to the method of proof we described above, we assume from now on that $\tilde{T}_0>0$ is chosen and fixed such that
we have the a priori bounds: 
\begin{equation} \label{eq_Rega}
\dfrac{\underline{\rho^0}}{2} \leq \rho(t,x) \leq  2\overline{\rho^0} \qquad \text{ on $(0,\tilde{T}_0) \times \mathbb R$}
\end{equation}
\begin{equation}\label{eq_Regb}
 \sup_{(0,\tilde{T}_0)} \|u\|_{H^1_{\sharp}}^2  + \int_0^{\tilde{T}_0} \| \partial_x z\|_{L^2_{\sharp} }^2 {\rm d}s \leq K^0_u.
\end{equation}

We state first the following  lemma:
\begin{lemma} \label{lem_divu}
Let denote: 
$$
K^0_{d} =  \dfrac 1{\mu^0}\left( \sqrt{L} + \dfrac{1}{\sqrt{L}} \right)  \left(2  K_{\mu}^0 \mathcal E_{0}^c + 2 L |K^0_p|^2 + K_u^0 \right)^{\frac 12} + \dfrac{K_{p}^0}{\mu^0},
$$
(see \eqref{eq_assummu} for the definition of $\mu^0$). Then, $K^0_d$ depends only on $\underline{\rho^0},\overline{\rho^0},\|u^0\|_{H^1_{\sharp}}$ and, if $\tilde{T}_0 < 1,$ there holds
\begin{equation} \label{eq_Regd}
\int_0^{\tilde{T}_0}\|\partial_x u \|_{L^{\infty}_{\sharp}} \leq |{\tilde{T}_0}|^{\frac 12} K^0_d.
\end{equation}
\end{lemma}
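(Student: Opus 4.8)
The plan is to make the effective-flux variable $z = \mu(\rho)\partial_x u - p(\rho)$ the central object: its spatial $H^1$-regularity is exactly what upgrades the merely $L^2$-in-space control of $\partial_x u$ (coming from the energy dissipation) into an $L^\infty$-in-space bound. Since \eqref{eq_assummu} guarantees $\mu(\rho)\ge \mu^0>0$, I would first invert the definition of $z$ to write $\partial_x u = (z+p(\rho))/\mu(\rho)$, whence, using the density bounds \eqref{eq_Rega} and the definition of $K^0_p$,
\begin{equation*}
\|\partial_x u(t,\cdot)\|_{L^\infty_\sharp} \le \frac{1}{\mu^0}\left(\|z(t,\cdot)\|_{L^\infty_\sharp} + K^0_p\right).
\end{equation*}
Integrating in time and applying Cauchy--Schwarz reduces the target to a bound on $\left(\int_0^{\tilde T_0}\|z\|_{L^\infty_\sharp}^2\right)^{1/2}$, together with the elementary estimate $\int_0^{\tilde T_0} K^0_p \le K^0_p\,|\tilde T_0|^{1/2}$ (valid because $\tilde T_0<1$, so $\tilde T_0\le |\tilde T_0|^{1/2}$); this last term produces precisely the summand $K^0_p/\mu^0$ of $K^0_d$.

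Next I would pass from $L^\infty$ to $H^1$ in space by the one-dimensional periodic Sobolev inequality. Splitting a periodic $f$ into its mean and a zero-mean remainder and estimating each over a period gives $\|f\|_{L^\infty_\sharp} \le \frac{1}{\sqrt L}\|f\|_{L^2_\sharp} + \sqrt L\,\|\partial_x f\|_{L^2_\sharp} \le (\sqrt L + 1/\sqrt L)\,\|f\|_{H^1_\sharp}$, the first bound coming from Cauchy--Schwarz on the mean and from the fundamental theorem of calculus started at a zero of the remainder. Applied to $z$, this supplies the prefactor $\frac{1}{\mu^0}(\sqrt L + 1/\sqrt L)$ appearing in $K^0_d$ and leaves me to control $\int_0^{\tilde T_0}\|z\|_{H^1_\sharp}^2$.

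For that quantity I would use the two a priori bounds already available. The gradient part $\int_0^{\tilde T_0}\|\partial_x z\|_{L^2_\sharp}^2 \le K^0_u$ is exactly \eqref{eq_Regb}. For the $L^2$ part I would write $|z|^2 \le 2\mu^2|\partial_x u|^2 + 2|p(\rho)|^2$ and observe that $\mu^2|\partial_x u|^2 = \mu\,(\mu|\partial_x u|^2) \le K^0_\mu\,\mu|\partial_x u|^2$, so that the energy estimate \eqref{eq_ec0} bounds $\int_0^{\tilde T_0}\!\int_0^L \mu|\partial_x u|^2$ by $\mathcal E^c_0$, while $\int_0^{\tilde T_0}\!\int_0^L |p(\rho)|^2 \le L|K^0_p|^2\,\tilde T_0 \le L|K^0_p|^2$ again by $\tilde T_0<1$. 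Summing the two pieces yields $\int_0^{\tilde T_0}\|z\|_{H^1_\sharp}^2 \le 2K^0_\mu\,\mathcal E^c_0 + 2L|K^0_p|^2 + K^0_u$, which is precisely the expression under the square root in $K^0_d$. Chaining the three displays back then gives the claimed bound $|\tilde T_0|^{1/2}K^0_d$, and each constant entering $K^0_d$ depends only on $\underline{\rho^0},\overline{\rho^0},\|u^0\|_{H^1_\sharp}$ through $K^0_\mu$, $K^0_p$, $\mathcal E^c_0$ and $K^0_u$.

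As for difficulty, there is no genuine obstacle: once $z$ is taken as the working variable, the proof is essentially a careful bookkeeping of constants. The only conceptual point --- and the reason this lemma is stated at this stage --- is that $\partial_x u$, the divergence of the velocity, is not itself controlled in $L^\infty$, whereas its combination $z$ with the pressure is regular in space; this is exactly what makes the division-free representation $\partial_x u = (z+p(\rho))/\mu(\rho)$ and the resulting $L^1_t L^\infty_x$ control of $\partial_x u$ available, as needed later to follow the interface dynamics.
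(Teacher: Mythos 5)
Your proposal is correct and follows essentially the same route as the paper: both arguments hinge on the inversion $\partial_x u = (z+p(\rho))/\mu(\rho)$ with $\mu \geq \mu^0$, the periodic embedding $\|f\|_{L^{\infty}_{\sharp}} \leq (\sqrt{L}+1/\sqrt{L})\|f\|_{H^1_{\sharp}}$, the splitting $|z|^2 \leq 2\mu^2|\partial_x u|^2 + 2|p|^2$ combined with the energy dissipation \eqref{eq_ec0} and the a priori bound \eqref{eq_Regb} for $\partial_x z$, followed by Cauchy--Schwarz in time and the restriction $\tilde{T}_0<1$ to absorb the pressure term. The only difference is the order of operations (you invert to $\partial_x u$ first and bound $z$ afterwards, while the paper bounds $z$ first), which is purely cosmetic.
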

\begin{proof}
We recall first the classical embedding 
$H^1_{\sharp} \subset L^{\infty}_{\sharp}$ with the embedding inequality:
$$
\|v \|_{L^{\infty}_{\sharp}} \leq \left( \sqrt{L} + \dfrac{1}{\sqrt{L}} \right) \|v\|_{H^1_{\sharp}},
$$
Let now $T \leq \tilde{T}_0.$ Due to \eqref{eq_Regb}, we have
$$ 
\int_0^T \int_0^L |\partial_x z|^2 \leq K^0_u.
$$
Then, by construction, there holds:
$$
|z|^2 \leq 2 \left(  |\mu|^2 |\partial_x u|^2 + |p|^2\right)
$$
Consequently, recalling \eqref{eq_ec0}, we obain:
\begin{eqnarray*}
\int_0^T \int_0^L |z|^2 &\leq& 2 \int_0^T \int_0^L  |\mu|^2 |\partial_x u|^2 + 2 \int_0^T \int_0^L |p|^2 \\
							&\leq & 2 K_{\mu}^0  \int_0^T \int_0^L  \mu |\partial_x u|^2 + 2 TL |K^0_p|^2 \\
							& \leq & 2  K_{\mu}^0 \mathcal E_{0}^c + 2 TL |K^0_p|^2.
\end{eqnarray*}
Finally, we have:
$$
\int_0^T \|z \|_{ L^{\infty}_{\sharp}}^2 \leq \left( \sqrt{L} + \dfrac{1}{\sqrt{L}} \right)^2  \left(2  K_{\mu}^0 \mathcal E_{0}^c + 2 TL |K^0_p|^2 + K^0_u \right).
$$
and thus
$$
\int_0^T \|z \|_{L^{\infty}_{\sharp}} \leq \sqrt{T}\left( \sqrt{L} + \dfrac{1}{\sqrt{L}} \right)  \left(2  K_{\mu}^0 \mathcal E_{0}^c + 2 TL |K^0_p|^2 + K^0_u \right)^{\frac 12}.
$$
Then, we remark that 
$$
\partial_{x} u = \dfrac{z+ p }{\mu},  \quad \text{ so that (with the bound \eqref{eq_assummu}),}  \quad |\partial_x  u| \leq \dfrac{1}{\mu^0} |z| + \dfrac{p}{\mu^0}
$$
and :
$$
\int_0^T \|\partial_x u\|_{L^{\infty}_{\sharp}}  \leq \dfrac{\sqrt{T}}{\mu^0}\left( \sqrt{L} + \dfrac{1}{\sqrt{L}} \right)  \left(2  K_{\mu}^0 \mathcal E_{0}^c + 2 TL |K^0_p|^2 + K^0_u \right)^{\frac 12} + \dfrac{T}{\mu^0} K_{p}^0.
$$
Hence, under the further restriction $T < 1,$ we obtain:
$$
\int_0^T \|\partial_x u \|_{ L^{\infty}_{\sharp}}  \leq \dfrac{\sqrt{T}}{\mu^0} \left[ \left( \sqrt{L} + \dfrac{1}{\sqrt{L}} \right) \left(2  K_{\mu}^0 \mathcal E_{0}^c + 2 L |K^0_p|^2 + K^0_u \right)^{\frac 12} +  K_{p}^0\right]
$$
which yields the expected result setting $T=\tilde{T}_0.$
\end{proof}

We now consider the continuity equation and derive bounds for $\rho$:
\begin{lemma} \label{lem_rho}
There exists $T^{\rho}_0$ depending only on $\underline{\rho^0},\overline{\rho^0},\|u^0\|_{H^1_{\sharp}}$ for which, if we assume that  $\tilde{T}_0 < T^{\rho}_0$ then, there holds:
$$
\dfrac{2}{3} \underline{\rho^0} < \rho(t,x) < \dfrac{3}{2} \overline{\rho^0} \quad \forall \, (t,x) \in (0,\tilde{T}_0) \times \mathbb R.
$$ 
\end{lemma}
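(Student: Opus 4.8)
The plan is to integrate the continuity equation along the characteristics of the velocity field and then insert the divergence control furnished by Lemma~\ref{lem_divu}. Since we work here with the strong solution of Theorem~\ref{thm_strsol}, the field $u$ lies in $C([0,\infty);H^1_\sharp)$ and, by Lemma~\ref{lem_divu}, $\partial_x u \in L^1((0,\tilde T_0);L^\infty_\sharp)$; this integrable-in-time Lipschitz bound is exactly what is needed for the flow map $X(\cdot;x)$ solving $\dot X = u(t,X)$, $X(0)=x$, to be well defined and to depend continuously on $x$ (an Osgood/Cauchy--Lipschitz argument with time-integrable Lipschitz constant is enough here, and is in any case harmless given the $L^2_{loc}(H^2_\sharp)$ regularity in space of the strong solution). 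First I would rewrite the mass equation in nonconservative form $\partial_t\rho + u\,\partial_x\rho = -\rho\,\partial_x u$, so that along a characteristic one has $\tfrac{\mathrm d}{\mathrm dt}\rho(t,X(t)) = -\rho(t,X(t))\,\partial_x u(t,X(t))$.

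Integrating this linear ODE yields the representation
\begin{equation*}
\rho(t,X(t)) = \rho^0(X(0))\,\exp\!\left(-\int_0^t \partial_x u(s,X(s))\,\mathrm ds\right),
\end{equation*}
whence, bounding the exponent from above and below by $\pm\int_0^t\|\partial_x u(s,\cdot)\|_{L^\infty_\sharp}\,\mathrm ds$, for every $(t,x)\in(0,\tilde T_0)\times\mathbb R$,
\begin{equation*}
\underline{\rho^0}\,\exp\!\left(-\int_0^{\tilde T_0}\|\partial_x u\|_{L^\infty_\sharp}\right) \le \rho(t,x) \le \overline{\rho^0}\,\exp\!\left(\int_0^{\tilde T_0}\|\partial_x u\|_{L^\infty_\sharp}\right).
\end{equation*}
Now I would invoke Lemma~\ref{lem_divu} (applicable since $\tilde T_0<1$), which controls $\int_0^{\tilde T_0}\|\partial_x u\|_{L^\infty_\sharp}$ by $|\tilde T_0|^{1/2}K^0_d$, giving $\rho(t,x)\le\overline{\rho^0}\exp(|\tilde T_0|^{1/2}K^0_d)$ and $\rho(t,x)\ge\underline{\rho^0}\exp(-|\tilde T_0|^{1/2}K^0_d)$.

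It then suffices to choose $\tilde T_0$ small enough that both exponentials lie between $2/3$ and $3/2$. Since $\exp(x)<3/2$ and $\exp(-x)>2/3$ are both equivalent to $x<\ln(3/2)$, I would set
\begin{equation*}
T^\rho_0 := \min\left\{1,\ \left(\frac{\ln(3/2)}{K^0_d}\right)^2\right\},
\end{equation*}
which, because $K^0_d$ depends only on $\underline{\rho^0},\overline{\rho^0},\|u^0\|_{H^1_\sharp}$ by Lemma~\ref{lem_divu}, has precisely the required dependencies; for $\tilde T_0<T^\rho_0$ the claimed strict inequalities follow. The point to watch — rather than a genuine obstacle — is that no circularity creeps in: Lemma~\ref{lem_divu} uses only the \emph{weaker} a priori bounds \eqref{eq_Rega}--\eqref{eq_Regb}, so the strictly sharper bounds produced here are exactly the improvement that drives the connectedness (bootstrap) argument, and the only genuinely technical ingredient is the justification of the characteristic representation under the integrable-in-time Lipschitz control of $u$.
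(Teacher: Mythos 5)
Your proof is correct, but it follows a genuinely different route from the paper's. You work in Lagrangian coordinates: you integrate $\tfrac{\mathrm d}{\mathrm dt}\rho(t,X(t))=-\rho\,\partial_x u(t,X(t))$ along characteristics and bound the exponential factor via Lemma~\ref{lem_divu}. The paper stays entirely Eulerian: it differentiates $\int_0^L|\rho|^p$ for $p\in[1,\infty[\,\cup\,]-\infty,-1[$, obtains $\bigl(\int_0^L|\rho|^p\bigr)^{1/p}\leq\bigl(\int_0^L|\rho^0|^p\bigr)^{1/p}\exp\bigl(2\int_0^T\|\partial_x u\|_{L^\infty_\sharp}\bigr)$ by Gronwall, and lets $|p|\to\infty$ to recover the two-sided $L^\infty$ bounds on $\rho$ and $\rho^{-1}$; the endgame (insert Lemma~\ref{lem_divu}, choose $T^\rho_0$ so that the exponential is trapped between $2/3$ and $3/2$) is identical in both arguments, up to your constant being $(\ln(3/2)/K^0_d)^2$ versus the paper's $|\ln(3/2)/(2K^0_d)|^2$ because the $L^p$ route pays a harmless factor $2$ in the exponent from the estimate $|p-1|\le 2|p|$. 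What each approach buys: yours gives the slightly sharper exponent and a transparent pointwise picture, but it obliges you to justify two things the paper never needs — that the flow map of $u$ is well defined \emph{and onto} (so that every point $(t,x)$ lies on some characteristic; invertibility of the flow under the time-integrable Lipschitz bound gives this, but you should say it), and that $t\mapsto\rho(t,X(t))$ is absolutely continuous so the chain rule applies, which for the strong solution of Theorem~\ref{thm_strsol} (where $\partial_t\rho$, $\partial_x\rho$ are only $L^2$-valued) requires a short mollification or renormalization argument that you flag but do not carry out. The paper's $L^p$ method needs only integration by parts on the periodic cell and Gronwall, so it avoids characteristics altogether at the cost of the cruder constant. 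Your remark on non-circularity is accurate and matches the paper's bootstrap structure: Lemma~\ref{lem_divu} rests only on the a priori bounds \eqref{eq_Rega}--\eqref{eq_Regb}, and the strict improvement you produce is what feeds the connectedness argument.
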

\begin{proof}
By standard arguments, we have that, for arbitrary $p \in [1,\infty[ \cup ]-\infty,-1[$ there holds:
$$
\dfrac{1}{p}\dfrac{\textrm{d}}{{\textrm d}t} \left[ \int_{0}^L |\rho|^p \right]  + \dfrac{1}{p} \int_0^L u \partial_x |\rho|^p = - \int_0^L |\rho|^p \partial_x u
$$
so that:
$$
\dfrac{\textrm{d}}{{\textrm d}t} \left[ \int_{0}^L |\rho|^p \right]  \leq   \left| p-1  \right|\int_0^L |\rho|^p |\partial_x u|
\leq 2|p| \|\partial_x u \|_{L^{\infty}_{\sharp}}  \int_0^L |\rho|^p.
$$
Consequently, there holds:
$$
\left( \int_0^L |\rho|^p \right)^{\frac 1p}\leq  \left( \int_0^L |\rho^0|^p \right)^{\frac 1p} \exp\left( 2 \int_0^T \|\partial_x u \| _{L^{\infty}_{\sharp}}\right).
$$
In the limit $|p| \to \infty$ we thus have
\begin{align*}
&\|\rho(t,\cdot)\|_{L^{\infty}_{\sharp}} \leq \overline{\rho^0} \exp\left(2 \int_0^T \|\partial_x u\|_{L^{\infty}_{\sharp}}\right)\,, \\
&\||\rho(t,\cdot)|^{-1} \|_{L^{\infty}_{\sharp}} \leq \dfrac{1}{\underline{\rho^0}}\exp\left(2 \int_0^T
 \|\partial_x u \|_{L^{\infty}_{\sharp}}\right).
\end{align*}
Setting 
$$
T^{\rho}_0 :=  \min\left(\frac 12 , \left| \dfrac{1}{2K^0_d} \ln\left(\frac 32\right) \right|^2\right)
$$
(which has then the expected dependencies, see the definition of $K^0_d$), 
and assuming $\tilde{T}_0 < T^{\rho}_0 < 1,$ we apply Lemma \ref{lem_divu} on $(0,\tilde{T}_0)$ and obtain:
$$
\underline{\rho^0} \exp\left(-2|\tilde{T}_0|^{\frac 12} K_d^0 \right) \leq \rho(t,x) \leq  \overline{\rho^0}  \exp\left(2 |\tilde{T}_0|^{\frac 12} K_d^0 \right)\,,
$$
where
$$
\exp(2 |\tilde{T}_0|^{\frac 12}K_d^0) \leq  \exp(2 \sqrt{T^{\rho}_0} K_d) \leq \dfrac{3}{2}\,.
$$
\end{proof}

We conclude with deriving estimates for $u$ and $z:$
\begin{lemma} \label{lem_uniformbound}
There exists $T^{u}_0>0$ depending only on $\underline{\rho^0},\overline{\rho^0},\|u^0\|_{H^1_{\sharp}}$ for which, if we assume that  $\tilde{T}_0 < T^{u}_0$, there holds:
\begin{multline*}
\sup_{t \in [0,\tilde{T}_0]} \| \partial_x u \|^2_{L^2_{\sharp}}  + \int_0^{\tilde{T}_0} |\partial_x z |^2{\rm d}s   \\
\leq  \left( \dfrac{8}{\mu^0} + 36 \overline{\rho^0} \right)   \left[ \|\sqrt{\mu(\rho^0)} \partial_x u _0- \kappa(\rho^0) \|_{L^2_{\sharp}}^2 + 1 + L|K^0_{\kappa}|^2\right].
\end{multline*}
\end{lemma}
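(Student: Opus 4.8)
The plan is to run a Hoff--Desjardins type higher-order estimate on the effective flux $z=\mu(\rho)\partial_x u-p(\rho)$, working throughout on the interval $(0,\tilde T_0)$ where the a priori bounds \eqref{eq_Rega}--\eqref{eq_Regb} are assumed, and to gain the required factor by restricting to a short time. Using the continuity equation to rewrite the momentum equation as $\rho\,\dot u=\partial_x z$ with $\dot u:=\partial_t u+u\partial_x u$, I would build the estimate around two ingredients. The first is the material law $\mu\,\partial_t\partial_x u=\partial_t z+\partial_t p-\mu'(\rho)\partial_t\rho\,\partial_x u$, obtained by differentiating $z$ in time and using $\partial_x u=(z+p)/\mu$. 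The second is the dissipation identity $\int_0^L\rho|\dot u|^2=\int_0^L|\partial_x z|^2/\rho$, which, together with $\rho\le 2\overline{\rho^0}$, already accounts for the term $\int_0^{\tilde T_0}\|\partial_x z\|_{L^2_\sharp}^2$ on the left-hand side since $\int_0^L\rho|\dot u|^2\ge\tfrac1{2\overline{\rho^0}}\int_0^L|\partial_x z|^2$.

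The core computation is to test $\rho\,\dot u=\partial_x z$ with $\dot u$ and integrate by parts in $x$ (periodicity removes the boundary terms):
\[
\int_0^L\rho|\dot u|^2=\int_0^L\partial_x z\,(\partial_t u+u\partial_x u)=-\int_0^L z\,\partial_t\partial_x u-\int_0^L z\,\partial_x(u\partial_x u).
\]
Substituting the material law into the first integral isolates the leading term $-\tfrac{d}{dt}\int_0^L z^2/(2\mu)$, whose value at $t=0$ reproduces, up to the harmless placement of $\sqrt\mu$, the quantity $\|\sqrt{\mu(\rho^0)}\partial_x u_0-\kappa(\rho^0)\|_{L^2_\sharp}^2$ on the right, because $z^2/\mu=\mu|\partial_x u-\kappa|^2$. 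This yields a balance of the schematic form
\[
\frac{d}{dt}\int_0^L\frac{z^2}{2\mu}+\int_0^L\rho|\dot u|^2=R(t),
\]
where $R(t)$ collects all remaining terms: each is a product of $z$ or $\partial_x z$ with factors built from $\partial_x u$, $\partial_x\rho$, $u$, $\partial_t\rho=-\partial_x(\rho u)$ and the bounded functions $\mu,\mu',p,p'$ evaluated at $\rho$.

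I would then bound $R(t)$ with the information available on $(0,\tilde T_0)$: the two-sided bound \eqref{eq_Rega}, the BD bound $\|\rho\|_{H^1_\sharp}\le C_0$ underlying \eqref{eq_BD4}, the energy bound \eqref{eq_ec0}, the control $\|u\|_{H^1_\sharp}^2\le K^0_u$ from \eqref{eq_Regb}, the interpolation $\|z\|_{L^\infty_\sharp}\le C\|z\|_{H^1_\sharp}$, and, crucially, the divergence control $\int_0^{\tilde T_0}\|\partial_x u\|_{L^\infty_\sharp}\le|\tilde T_0|^{1/2}K^0_d$ of Lemma~\ref{lem_divu}. Terms of the form $\|\partial_x u\|_{L^\infty_\sharp}\int_0^L z^2$ are disposed of by this last bound. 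The genuinely awkward terms are those pairing $\partial_x z$ with $\partial_x\rho$, for instance $\int_0^L(\mu'\partial_x\rho/\rho)\,z\,\partial_x z$; these are specific to the density-dependent viscosity and have no analogue in the constant-viscosity Hoff argument. I would split them by Young's inequality, sending a fraction $\varepsilon\int_0^L|\partial_x z|^2$ into the dissipation on the left and leaving a term $C_\varepsilon(K^0_u,C_0)\int_0^L z^2$; since all constants depend only on $\underline{\rho^0},\overline{\rho^0},\|u^0\|_{H^1_\sharp}$ through $K^0_u$ and $C_0$, integration in time puts a factor $\tilde T_0$ in front of $\sup_t\int_0^L z^2$.

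Collecting everything and integrating on $(0,t)$ for $t\le\tilde T_0$ leads to
\[
\int_0^L\frac{z^2(t)}{2\mu}+\frac12\int_0^t\int_0^L\rho|\dot u|^2\le\int_0^L\frac{|z^0|^2}{2\mu(\rho^0)}+C(K^0_u,C_0)\,\tilde T_0\Big(1+\sup_{s\le t}\int_0^L z^2\Big)+C\,\tilde T_0\,L|K^0_\kappa|^2 .
\]
The proof is then closed by a smallness argument: one chooses $T^u_0$, depending only on the admissible data through $K^0_u,C_0,K^0_d$, small enough that $C(K^0_u,C_0)\tilde T_0$ absorbs the $\sup_s\int z^2$ contribution into the left-hand side and bounds the remaining source by $1+L|K^0_\kappa|^2$. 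Converting $\int z^2/\mu$ back to $\|\partial_x u\|_{L^2_\sharp}^2$ via $\|\partial_x u\|_{L^2_\sharp}^2\le\tfrac1{\mu^0}\int_0^L\mu|\partial_x u|^2$ together with $\int_0^L\mu|\partial_x u|^2=\int_0^L z^2/\mu+2\int_0^L p\,\partial_x u-\int_0^L p^2/\mu$, and using $\int_0^L\rho|\dot u|^2\ge\tfrac1{2\overline{\rho^0}}\int_0^L|\partial_x z|^2$, is exactly what produces the two structural coefficients $1/\mu^0$ and $\overline{\rho^0}$; the precise constants $8$ and $36$ then follow from careful bookkeeping. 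The main obstacle, as flagged, is the viscosity-gradient remainders $\mu'\partial_x\rho\,\partial_x z$: they cannot be made small by time alone and must be absorbed into the parabolic dissipation, which is why the BD $H^1$-bound on $\rho$ and the short-time gain must be used simultaneously.
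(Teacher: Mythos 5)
Your skeleton is the right one, and it is essentially the paper's: an effective-flux (Hoff--Desjardins) estimate obtained by testing the momentum equation against a material-derivative-type multiplier, extracting $\frac{\rm d}{{\rm d}t}\int_0^L\mu|\partial_x u-\kappa|^2$ together with the dissipation $\int_0^L\rho|\dot u|^2\geq \frac{1}{2\overline{\rho^0}}\int_0^L|\partial_x z|^2$, and closing by Gronwall on a short time interval (the paper uses the corrected multiplier $v=\partial_t u-\partial_x^{-1}\left[\partial_t\kappa-\mathbb E[\partial_t\kappa]\right]$ rather than $\dot u$, which is a presentational rather than conceptual difference). The gap is exactly at the point you flag as ``genuinely awkward''. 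You propose to handle the remainder terms in which a spatial derivative falls on the density -- your example is $\int_0^L(\mu'\partial_x\rho/\rho)\,z\,\partial_x z$ -- by Young's inequality combined with the BD bound \eqref{eq_BD4}, $\|\rho\|_{H^1_{\sharp}}\leq C_0$. But the constant in \eqref{eq_BD4} is inherited from the initial BD entropy $\int_0^L\rho^0|u^0+\partial_x\varphi(\rho^0)|^2+q(\rho^0)$, and $\partial_x\varphi(\rho^0)=\mu(\rho^0)\partial_x\rho^0/(\rho^0)^2$, so it depends on $\|\partial_x\rho^0\|_{L^2_{\sharp}}$. This dependency is precisely what the lemma forbids: $T^u_0$ and the right-hand side must depend only on $\underline{\rho^0},\overline{\rho^0},\|u^0\|_{H^1_{\sharp}}$ (through $\mathcal E^0_c$, $K^0_u$, $K^0_d$ and the $L^\infty$-type constants $K^0_\beta$).

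This is not a technicality that bookkeeping can fix. In the compactness argument for {\bf Theorem \ref{thm_multi}}, the initial densities $\rho^0_n$ oscillate -- they converge only in the sense of Young measures to a combination of several Dirac masses -- so $\|\rho^0_n\|_{H^1_{\sharp}}$ is unbounded along the sequence, and any estimate whose constants involve that norm cannot provide the uniform $T_0$ and uniform bounds on which the homogenization limit rests. The paper's proof is organized specifically to prevent any spatial derivative from landing on $\rho$: every occurrence of $\partial_t\kappa$ or $\partial_t(1/\mu)$ is rewritten via the transport identity \eqref{eq_young} in divergence form, $\partial_t\beta(\rho)=-\partial_x(\beta(\rho)u)-(\beta'(\rho)\rho-\beta(\rho))\partial_x u$, and the divergence term is then either integrated by parts onto $z$ (producing $u\,z\,\partial_x z/\mu$ and $\partial_x u\,|z|^2$ terms, estimated by $\|u\|_{H^1_{\sharp}}$, $\|\partial_x u\|_{L^\infty_{\sharp}}$ from {\bf Lemma \ref{lem_divu}}, and $\|z\|_{L^2_{\sharp}}$) or absorbed through the mean-free primitive $\partial_x^{-1}$ and Poincar\'e--Wirtinger (the term $I_4$). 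Your argument can be repaired by the same device -- e.g. write $\mu'\partial_x\rho/\mu^2=-\partial_x(1/\mu)$ and integrate by parts so the derivative moves onto $z^2u$ -- but as written, the appeal to \eqref{eq_BD4} breaks the stated uniformity and would propagate to invalidate the main theorem.
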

\begin{proof}
The proof of this result is based on the use of a suitable multiplier for the momentum equation:
$$
\rho (\partial_t u + u \partial_x u ) = \partial_x \left[ \mu \partial_x u - p \right]  \quad \text{a.e. on $(0,\infty) \times (0,L)$} 
$$  
Precisely,  we introduce the following conventions: 
\begin{enumerate}[$\bullet$]
\item the operator $\mathbb E$ corresponds to the mean of an $L$-periodic $L^1$-function;
\item the operator $\partial_{x}^{-1}$ corresponds to the periodic mean-free primitive
of an $L$-periodic function of mean $0.$ It maps $H^{m}_{\sharp}$ into $H^{m+1}_{\sharp}$ for arbitrary
$m \in \mathbb N$ and admits a straightforward density extension for $m\in \mathbb Z \setminus \mathbb N$ (when $m\in \mathbb Z\setminus \mathbb N$, $H^m_{\sharp}$ stands for the dual of the subspace of $H^{|m|}_{\sharp}$ containing all functions with mean zero);
\item throughout the proof, $C_0$ is a constant which depends only on $\underline{\rho^0},\overline{\rho^0}$ and $\|u^0\|_{H^1_{\sharp}}$. It may vary between lines.
\end{enumerate}
Then, we let $T \in (0,\tilde{T}_0)$ and we set:
$$
v =  \partial_t u - \partial_x^{-1} \left[ \partial_t \kappa  -  \mathbb E[\partial_t \kappa]  \right] \quad \text{ on $(0,T).$}
$$
We recall that thanks to the continuity equation, there holds, for arbitrary $\beta \in C^{1}([0,\infty))$
\begin{equation}
\partial_t \beta(\rho)  + \partial_x (\beta(\rho) u) + (\beta'(\rho) \rho - \beta(\rho))\partial_x u = 0. \label{eq_young}
\end{equation}
Hence we have that  $\kappa,\mu,p$ belong to the space $H^1(0,T;L^2_{\sharp}) \cap C([0,T] ; C_{\sharp}) \cap L^2(0,T;H^1_{\sharp}).$  Consequently, $v \in L^2((0,T) ; L^2_{\sharp})$ and we have then:
\begin{equation} \label{eq_vf1}
\int_0^T \int_0^L \rho (\partial_t u + u \partial_x u ) v = \int_0^T \int_0^L \partial_x \left[ \mu \partial_x u - p \right] v
\end{equation}
On the right-hand side, we note that we can approximate $u$ by projecting on Fourier series with a finite number of terms. 
This yields a sequence $u^N$ converging to $u$ in  $H^1(0,T;L^2_{\sharp}) \cap C([0,T] ; H^1_{\sharp}) \cap L^2(0,T ; H^2_{\sharp}).$ Furthermore, the extension of $\partial_{x}^{-1}$ to negative sobolev spaces yields that:
$$
v  = \partial_x^{-1} \left[ \partial_t \partial_x u -  (  \partial_t \kappa  -  \mathbb E[\partial_t \kappa]  ) \right]\,.
$$
Hence  the following formal integration by parts that are valid for $L$-periodic trigonometric polynoms (with $v^N = \partial_t u^N - \partial_x^{-1} \left[ \partial_t \kappa  -  \mathbb E[\partial_t \kappa]  \right] $):
\begin{eqnarray*}
  \int_{0}^L   \partial_x \left[ \mu \partial_x u^N - p  \right]  v^N &=& - \int_{0}^L    \left[ \mu \partial_x u^N - p  \right]  \partial_x v^N \\
		&=& - \int_{0}^L \mu \left[ \partial_x u^N - \kappa \right] \partial_t \left[ \partial_x u^N - \kappa \right] - \int_0^L\left[ \mu \partial_x u^N - p  \right]  \mathbb E[\partial_t \kappa]  \\
		&=& - \dfrac{1}{2} \dfrac{\textrm{d}}{\textrm{d}t}\int_0^L \mu \left| \partial_x u^N - \kappa \right|^2 +\dfrac{1}{2} \int_0^L \partial_t \mu \left| \partial_x u^N - \kappa \right|^2 \\
		&& \quad -\mathbb E[{\mu \partial_x u^N - p}] \int_0^L  \partial_t  \kappa \,,
\end{eqnarray*}
extend into:
\begin{multline*}
\int_0^T  \int_{0}^L   \partial_x \left[ \mu \partial_x u - p  \right]  v 
= - \dfrac{1}{2} \left[ \int_0^L \mu \left| \partial_x u - \kappa \right|^2 \right]_0^T + \dfrac{1}{2} \int_0^T \int_0^L \partial_t \mu \left| \partial_x u - \kappa \right|^2 \\- \int_0^T \mathbb E[{\mu \partial_x u - p}]  \int_0^L  \partial_t  \kappa .
\end{multline*}
This simplifies the RHS of \eqref{eq_vf1}, whereas, on the left-hand side, we have:
\begin{eqnarray*}
LHS &=& \int_0^T\int_0^L \rho \left( \partial_t u + u \partial_x u\right) \left( \partial_t u - \partial_x^{-1} [\partial_t \kappa - \mathbb E [{\partial_t \kappa}]]\right) \\
        &\geq & \dfrac{1}{2}\int_0^T \int_0^L \rho \left| \partial_t u + u \partial_x u \right|^2 -   \int_0^T\int_0^L \rho |u \partial_x u|^2 - \int_0^T\int_0^L \rho \left| \partial_x^{-1} [\partial_t \kappa - \mathbb E{[\partial_t \kappa]}]\right|^2.
\end{eqnarray*}
Finally, \eqref{eq_vf1} reduces to\footnote{Note that $\rho (\partial_t u + u \partial_x u ) = \partial_x[\mu \partial_x u - p]$ and $\rho \leq 2 \overline{\rho^0}$}:
\begin{multline} \label{eq_vf2}
\dfrac{1}{2} \left[ \int_0^L \mu \left| \partial_x u - \kappa \right|^2 \right]_{t=T}   + \dfrac{1}{4\overline{\rho^0}} \int_0^T \int_0^L  \left| \partial_x \left[ \mu \partial_x u - p \right] \right|^2
\\
\begin{array}{l}
\displaystyle \leq \dfrac{1}{2} \left[ \int_0^L \mu(\rho^0) \left| \partial_x u^0 - \kappa(\rho^0) \right|^2 \right] +   \dfrac 12 \int_0^T\int_0^L \partial_t \mu \left| \partial_x u - \kappa \right|^2  - \int_0^T \mathbb E[z] \int_0^L  \partial_t  \kappa  \\[14pt]
\displaystyle \qquad +  \int_0^T\int_0^L \rho |u \partial_x u|^2+  \int_0^T \int_0^L \rho \left| \partial_x^{-1} [\partial_t \kappa - \mathbb E[{\partial_t \kappa}]]\right|^2\,, \\[14pt]
\displaystyle  \leq \dfrac{1}{2} \left[ \int_0^L \mu(\rho^0) \left| \partial_x u^0 - \kappa(\rho^0) \right|^2 \right] + \dfrac 12 I_1 - I_2 + I_3 + I_4.
\end{array}
\end{multline}
We bound now $I_1,I_2,I_3,I_4$. 

Applying \eqref{eq_young} with $\beta = 1/\mu,$ we have first:
\begin{eqnarray*}
I_1 &=&  - \int_0^T\int_0^L \partial_t \left[ \frac 1\mu\right] \left| \mu \partial_x u - p \right|^2 \\
	&=&   \int_0^T\int_0^L \partial_x\left[\dfrac u\mu \right]  \left| \mu \partial_x u - p \right|^2   - \int_0^T \int_0^L  \dfrac{(\mu' \rho + \mu)}{\mu^2}\partial_x u  \left| \mu \partial_x u - p \right|^2 \\
	&=&  - 2\int_0^T \int_0^L \dfrac{u}{\mu}[\mu \partial_x u - p] \partial_x[ \mu \partial_x u - p ] -   \int_0^T\int_0^L  \dfrac{(\mu' \rho + \mu)}{\mu^2}\partial_x u  \left| \mu \partial_x u - p \right|^2.
\end{eqnarray*}
Recalling  that  thanks to \eqref{eq_ec0}:
$$
\|u \|_{L^2_{\sharp}}^2  = \int_0^L |u|^2 \leq \dfrac{4}{\underline{\rho^0}} \int_0^L \dfrac{\rho |u|^2}{2} \leq  \dfrac{4\mathcal E_c^0}{\underline{\rho^0}},
$$
we obtain that, for arbitrary small $\varepsilon$:
\begin{eqnarray*}
|I_1| &\leq& {C_0} \int_0^T \left[ \|u \|_{L^{\infty}_{\sharp}} \|z\|_{L^2_{\sharp}} 
                       \|\partial_x z\|_{L^2_{\sharp}}  +  \|\partial_xu\|_{L^{\infty}_{\sharp}} \| z \|_{ L^2_{\sharp}}^2  \right] \\[8pt]
	 &\leq & \dfrac{1}{8\underline{\rho^0}}\int_0^T  \|\partial_x z \|_{L^2_{\sharp}}^2 + C_0 \int_0^T \|z \|_{L^2_{\sharp}}^2\left[  \|u\|_{L^2_{\sharp}}^2 +  \|\partial_xu\|_{ L^{2}_{\sharp}}^2 +  \|\partial_xu\|_{ L^{\infty}_{\sharp}}  \right]    
\end{eqnarray*}
Rewriting $z$ in  terms of $\partial_x u$ and $p(\rho),\mu(\rho),$ we obtain finally that
\begin{equation} \label{eq_I1}
|I_1| \leq {C_0} \int_0^T  \left( \|\partial_x u \|_{ L^2_{\sharp}}^2 +  \|\partial_xu \|_{L^{\infty}_{\sharp}} + 1\right) \int_0^L  \mu |\partial_x u - \kappa|^2  
   + \dfrac{1}{8\underline{\rho^0}} \int_0^T \|\partial_x z \|_{L^2_{\sharp}}^2.
\end{equation}

\medskip

Concerning $I_2 = \int_0^T \int_0^{L} \mathbb E[z] \partial_t \kappa$, we have, applying \eqref{eq_young}:
$$
\partial_t \kappa + \partial_x (\kappa u ) +( \kappa' \rho - \kappa  ) \partial_x u = 0, 
$$
so that
$$
\int_{0}^L \partial_t \kappa = - \int_0^L ( \kappa' \rho - \kappa  ) \partial_x u  
$$
and consequently, with the same arguments as above:
\begin{equation} \label{eq_I2}
|I_2| \leq  \int_0^TC_0 \|z\|_{ L^2_{\sharp}}  \|\partial_x u\|_{L^2_{\sharp}} \leq C_0 \int_0^T  \|\partial_x u \|^2_{ L^2_{\sharp}}  +  C_0 \int_0^T\int_0^L  \mu |\partial_x u - \kappa|^2    .
\end{equation}
Concerning $I_3 = \int_0^T\int_0^L \rho |u\partial_x u|^2,$ we proceed as previously:
\begin{eqnarray*}
|I_3| &\leq& C_0  \int_0^T \|u\|_{L^{\infty}_{\sharp}}^2  \int_{0}^{L}  |\partial_xu|^2\, \\
&\leq &  C_0 \int_0^T \left( 1  +    \|\partial_x u \|_{L^2_{\sharp}}^2\right)  \int_{0}^{L} |\partial_xu|^2\,\\
\end{eqnarray*}
Finally, expressing $\partial_x u$ in terms of $z$ and functions of $\rho,$  there still exists a constant $C_0$ for which:
\begin{equation} \label{eq_I3}
|I_3| \leq C_0   \int_0^T  (1+ \|\partial_x u\|_{L^2_{\sharp}}^2) \int_0^L \mu |\partial_x u - \kappa |^2 +  C_0\int_0^T  (1+ \|\partial_x u\|_{L^2_{\sharp}}^2).
\end{equation}
Then, for $I_4,$ we note as previously that:
$$
\partial_t \kappa - \mathbb E[\partial_t \kappa] = - \partial_x(\kappa u) - [ (\kappa' \rho - \kappa ) \partial_x u - \mathbb E[(\kappa' \rho - \kappa ) \partial_x u ]]
$$
Consequently, there holds:
$$
\partial_x^{-1}   \left[\partial_t \kappa -\mathbb E[\partial_t \kappa]  \right] = - \left[ \kappa u - \mathbb E[\kappa u]\right] - w
$$
where
$$
w = \partial_x^{-1} \left[(\kappa' \rho - \kappa ) \partial_x u - \mathbb E[ (\kappa' \rho - \kappa ) \partial_x u ] \right].
$$
A classical Poincar\'e--Wirtinger inequality  yields that:
$$
\| \partial_x^{-1}   \left[\partial_t \kappa - \mathbb E[{\partial_t \kappa}] \right] \|^2_{L^2_{\sharp}}
\leq  C_0 [ \|u\|^2_{ L^2_{\sharp}} +  \| \partial_x u  
\|^2_{L^2_{\sharp}} ]
$$
Hence $I_4 =\int_0^T\rho |\partial_x^{-1}   \left[\partial_t \kappa -  \mathbb E[{\partial_t \kappa}] \right] |^2$ satisfies:
\begin{equation} \label{eq_I4}
|I_4| \leq \int_0^T C_0 \left(1+  \| \partial_x u\|^2_{ L^2_{\sharp}}\right).
\end{equation}

Combining the computations \eqref{eq_I1}--\eqref{eq_I4} of $I_1,I_2,I_3,I_4,$
 we obtain finally that \eqref{eq_vf2} reads:
\begin{multline} \label{eq_vf3}
 \left[ \int_0^L \mu|\partial_x u - \kappa |^2 \right]_{t=T}   + \dfrac{1}{8 \underline{\rho^0}} \int_0^T \int_0^L  \left| \partial_x z \right|^2
 \\
 \leq    \left[ \int_0^L \mu(\rho^0) \left| \partial_x u^0 - \kappa(\rho^0) \right|^2 \right] +   \int_0^T  f(t)   \int_0^L \mu|\partial_x u - \kappa |^2  + \int_0^T g(t)
\end{multline} 
where
$$
 f(t) = C_0 \left(1+ \|\partial_x u \|_{L^2_{\sharp}}^2 
    +   \|\partial_xu \|_{L^{\infty}(0,L)}  \right).
$$
and 
\begin{eqnarray*}
g(t) &=&C_0 \left( 1+ \| \partial_x u \|_{L^2_{\sharp}}^2 \right) 
\end{eqnarray*}
On the one hand, we have that (we may assume $T < 1$ without restriction so that \eqref{eq_Regd} holds true):
\begin{eqnarray*}
\int_0^T f(t) {\rm d}t &\leq& C_0  \left( T +  \int_0^T   \|\partial_xu \|_{L^{\infty}(0,L)}
+  \|\partial_xu \|^2_{L^2_{\sharp}}\right) \\
						&\leq & C_0 \left( T (1+ K_u^0) + \sqrt{T} K_d^0\right).  
\end{eqnarray*}
Consequently, there exists $T_0^u < 1$ depending only on   $\underline{\rho^0},\overline{\rho^0},\|u^0\|_{H^1_{\sharp}}$  such that:
$$
\exp\left(\int_0^{T_0^u} f(t) {\rm d}t\right) \leq 2.
$$ 
Similarly we have:
\begin{eqnarray*}
\int_0^Tg(t){\rm dt} &\leq & C_0 T \left( 1 + K_u^0  \right)
\end{eqnarray*}
Hence, restricting the size of $T_0^u$ if necessary, but keeping the same dependencies, we have that, for $T < T_0^u:$ 
$$
\int_0^Tg(t){\rm dt}  \leq   \dfrac{1}{2} \left[ \int_0^L \mu(\rho^0) \left| \partial_x u^0 - \kappa(\rho^0) \right|^2  +1\right]. 
$$
Finally, by a standard application of the Gronwall lemma, we obtain then that, for arbitrary $T < T_0^u,$ there holds:
\begin{eqnarray*}
\sup_{t \in [0,T]} \|\sqrt{\mu}(\partial_x u - \kappa)\|^2_{L^2_{\sharp}} 
&\leq&  \displaystyle \exp\left( \int_0^T f(t){\rm dt}\right) \left(  \|\sqrt{\mu(\rho^0)} (\partial_x u _0- \kappa(\rho^0))\|^2_{L^2_{\sharp}}
+ 2 \int_0^T g(s){\rm d}s\right)  \\
&\leq & 4 \displaystyle  \left(\|\sqrt{\mu(\rho^0)} \partial_x u _0- \kappa(\rho^0)\|^2_{ L^2_{\sharp}} + 1 \right)
\end{eqnarray*}
Consequently:
$$
\sup_{t \in [0,T]} \| \partial_x u  \|^2_{L^2_{\sharp}}  \leq \dfrac{8}{\mu^0} \left[ \|\sqrt{\mu(\rho^0)} \partial_x u _0- \kappa(\rho^0) \|^2_{L^2_{\sharp}} + 1 + L|K^0_{p}|^2\right],
$$
and we also have: 
\begin{eqnarray*}
\dfrac{1}{4 \overline{\rho^0}}\int_0^T |\partial_x z |^2{\rm d}s &\leq 
&  \dfrac{1}{2} \left[ \int_0^L \mu(\rho^0) \left| \partial_x u^0 - \kappa(\rho^0) \right|^2 \right]  \\
&& \quad +  4 \int_0^T f(t){\rm dt}   \left( \|\sqrt{\mu(\rho^0)} ( \partial_x u _0- \kappa(\rho^0) )\|^2_{L^2_{\sharp}}  + 1\right)   +  \int_0^T g(s){\rm d}s \\
&\leq & 9  \left(\|\sqrt{\mu(\rho^0)} (\partial_x u _0- \kappa(\rho^0) )\|^2_{L^2_{\sharp}} +1 \right)
\end{eqnarray*}
Finally, we have indeed, that, for arbitrary $T  \in [0,T_0^u)$ there holds:
\begin{multline*}
\sup_{t \in [0,T]} \| \partial_x u\|^2_{ L^2_{\sharp}}  + \int_0^T |\partial_x z |^2{\rm d}s   \\
\leq  \left( \dfrac{8}{\mu^0} + 36 \overline{\rho^0} \right)   \left[ \|\sqrt{\mu(\rho^0)} \partial_x u _0- \kappa(\rho^0) \|^2_{L^2_{\sharp}} + 1 + L|K^0_{\kappa}|^2\right].
\end{multline*}
\end{proof}
Combining  {\bf Lemma \ref{lem_rho}} and {\bf Lemma \ref{lem_uniformbound}}, we obtain finally, that, for $T_0 = \min (1,T_0^{\rho},T_0^{u})/2$
we have $(\textbf{HDS})_c$ with 
$$
K_0  = 36 \left( \dfrac{1}{\mu^0} +  \overline{\rho^0} \right)   \left[ \|\sqrt{\mu(\rho^0)} \partial_x u _0- \kappa(\rho^0) \|_{L^2_{\sharp}}^2 + 1 + L|K^0_{\kappa}|^2\right].
$$

\subsection{Compactness argument : proof of main theorems}

In this last section, we complete the proof of our mains results: {\bf Theorem \ref{thm_existence1}} and {\bf Theorem \ref{thm_multi}}. 
We first remark that the proof of both results reduces to a study of compactness of HD solutions to \eqref{eq_NSC}-\eqref{eq_ee}.

To complete the proof of {\bf Theorem \ref{thm_existence1}}, we remark that, given an initial data $(\rho^0,u^0) \in L^{\infty}_{\sharp} \times H^1_{\sharp}$ we may approximate 
this initial data by a sequence $(\rho^0_n,u^0_n) \in L^{\infty}_{\sharp} \cap H^1_{\sharp}$ satisfying 
\begin{equation} \label{eq_bound1}
\underline{\rho^0} \leq \rho^0_n \leq \overline{\rho^0} \qquad
    \|u^0_n \|_{H^1_{\sharp}}\leq \|u^0 \|_{H^1_{\sharp}}\,, \quad \forall \, n \in \mathbb N\,,
\end{equation}
and 
\begin{equation} \label{eq_conv1}
\rho^0_n \to \rho^0 \text{ in $L^1_{\sharp}$} \qquad u^0_n\to u^0 \text{ in $H^1_{\sharp}$}.  
\end{equation}
This can be done by a standard mollifying/projection argument. Then, the result in the previous section shows that there exists $T_0 >0$
independant of $n \in \mathbb N$ for which there exists a HD solution $(\rho_n,u_n)$   to \eqref{eq_NSC}-\eqref{eq_ee} on $(0,T_0)$ associated
with initial data  $(\rho^0_n,u^0_n).$ Our objective is to prove that we can extract a subsequence of these HD solutions that converges
to an HD solution to  \eqref{eq_NSC}-\eqref{eq_ee} on $(0,T_0)$ associated
with initial data  $(\rho^0,u^0).$

On the other hand, to complete the proof of {\bf Theorem \ref{thm_multi}}, we consider a sequence of initial data $(\rho^0_n,u^0_n) \in L^{\infty}_{\sharp} \cap H^1_{\sharp}$.
Under the assumptions of {\bf Theorem \ref{thm_multi}}, there exists a constant $C_0 \in (0,\infty)$ for which:
\begin{equation} \label{eq_bound2}
\dfrac{1}{C_0} \leq \rho_n^0 \leq C_0 \qquad \|u^0_n\|_{H^1_{\sharp}} \leq C_0 \,, \quad \forall \, n \in \mathbb N.
\end{equation}
Assuming that {\bf Theorem \ref{thm_existence1}} holds (that would result from a first application of the proof below), there exists  $T_0 >0$
independant of $n \in \mathbb N$ for which there exists a HD solution $(\rho_n,u_n)$   to \eqref{eq_NSC}-\eqref{eq_ee} on $(0,T_0)$ associated
with initial data  $(\rho^0_n,u^0_n).$ We aim then at studying if this sequence admits cluster point and to compute a system satisfied by these cluster points.

We first make precise the convergence of the initial data that we apply in  {\bf Theorem \ref{thm_multi}}. We have the definition below:
\begin{definition}
Given $(L,C_0)>0$ we call $L$-periodic Young-measure on $\mathbb R \times [0,2C_0]$ any positive bounded measure $\mu$  on $R \times [0,2C_0],$ $L$-periodic in the first variable, and satisfying:
\begin{equation} \label{eq_youngcar}
\langle \mu , (x,\xi) \mapsto \phi(x) \rangle = \int_0^L \phi(x) {\rm d}x.
\end{equation}
 We denote $\mathcal Y_{\sharp}([0,L] \times [0,2C_0])$ the set of $L$-periodic Young measures. 
\end{definition}
This definition is an adaptation to the periodic framework of the definition of {\sc L. Tartar} \cite{Tartar}. It goes with several remarks:
\begin{enumerate}
\item As $L,C_0$ will be fixed in what follows, we drop it in the notations for Young measures. From now on, we denote simply $\mathcal Y_{\sharp}.$
\item As in the non-periodic case our Young measures form a closed subspace of the set of positive measures on $\mathbb R \times [0,2C_0].$ As we work in an unbounded domain (in $x$), "weak$-*$ convergence" is understood locally (see \cite[Section 2]{Tartar} for more details). 
\item there holds $\mathcal Y_{\sharp} \subset [L^{1}((0,2L);C([0,2C_0]))]^{*}.$ Indeed, if $\phi \in C_c((0,2L) \times [0,2C_0]) ,$ we have then that $|\phi(x,\xi)| \leq \|\phi(x,\cdot) \|_{C([0,2C_0])}$ and, by the positivity of $\nu$ and \eqref{eq_youngcar}:
$$
|\langle \nu , \phi \rangle | \leq \int_{0}^{2L} \|\phi(x,\cdot) \|_{C([0,2C_0])}{\rm d}x.
$$
The embedding property yields then from the fact that $C_c((0,2L) \times [0,2C_0])$ is dense in $L^{1}((0,2L);C([0,2C_0])),$
\item given $\rho \in L^{\infty}_{\sharp}$ such that $\|\rho\|_{L^{\infty}_{\sharp}} \leq 2C_0,$ we define $\nu_{\rho} \in \mathcal Y_{\sharp}$ by
$$
\langle \nu_{\rho}, \beta \rangle = \int_{0}^L \beta(x,\rho(x)) {\rm d}x\,, \quad \forall \, \beta \in C_c(\mathbb R \times [0,2C_0]).
$$ 
\end{enumerate}

\medskip

In the frame of {\bf Theorem \ref{thm_multi}} we assume that there exists $k \in \mathbb N$ and $(\alpha^0_i,\rho^0_i) \in [L^{\infty}_{\sharp}]^{2k}$
for which there holds:
\begin{itemize}
\item $\alpha^0_i \geq 0$ a.e. for $1 \leq i \leq k$ with $\sum_{i=1}^k \alpha_i^0 = 1,$ a.e.,\\
\item $1/C_0 \leq \rho_i^0 \leq C^0$ a.e. for $1 \leq i \leq k$, \\
\item $\nu_{\rho^0_n} \rightharpoonup \nu^0 = \sum_{i=1}^k \alpha_i \delta_{\xi=\rho_i}.$
\end{itemize}
Given the topology on measures, the last item is equivalent to:
$$
\beta(\rho_n) \rightharpoonup \sum_{i=1}^{k} \alpha_i \beta(\rho^i) \text{ in $L^{\infty}_{\sharp}-{w*},$} \quad  \forall \, \beta \in C([0,2C_0]).
$$
We recall in particular, that, if $\|\rho_n^0\|_{L^{\infty}_{\sharp}} \leq C_0$ for arbitrary $n \in \mathbb N$ and $\rho^0_n \to \rho^0$ in $L^1_{\sharp},$ there holds:
$$
\nu_{\rho_n} \rightharpoonup \nu_{\rho} \quad \text{in}  \quad \mathcal Y_{\sharp}-w*.
$$ 
Hence, the compactness study leading to {\bf Theorem \ref{thm_existence1}} is a particular case of the proof of {\bf Theorem \ref{thm_multi}} (in the case $k=1$).
We complete thus the study by the proof of {\bf Theorem \ref{thm_multi}} only.

So, we have now a uniform time $T_0 > 0,$  and a sequence $(\rho_n,u_n)$ of HD solutions associated with data $(\rho_n^0,u_n^0)$ satisfying
\eqref{eq_bound2}.  Thanks to these uniform bounds ${\textbf{(HDS)}_{c}}$ yields:
\begin{enumerate}[$\bullet$]
\item $\rho_n$ is bounded in $L^{\infty}(0,T_0 ; L^{\infty}_{\sharp})$ (from above and by below), and so do $\mu_n:= \mu(\rho_n),\, p_n:= p(\rho_n)$ and $\kappa_n := \kappa(\rho_n),$
\item $u_n$ is bounded in $L^{\infty}(0,T_0 ; H^1_{\sharp}),$
\item $z_n := \mu_n \partial_x u_n - p_n$ is bounded in $L^2(0,T_0 ; H^1_{\sharp}).$
\end{enumerate}
{\bf Lemma \ref{lem_divu}} together with standard computations imply then that
\begin{enumerate}[$\bullet$]
\item $\partial_x u_n$ is bounded in $L^{1}(0,T_0 ; L^{\infty}_{\sharp}) \cap L^{\infty}(0,T_0 ;  L^2_{\sharp}).$
\end{enumerate}
We have thus, up to the extraction of  a subsequence  (that we do not relabel for conciseness):
\begin{enumerate}[$\bullet$]
\item $\rho_n \rightharpoonup \rho, \, p_n \rightharpoonup p^{\infty}, \, \mu_n \rightharpoonup \mu^{\infty}$ and $\kappa_n  \rightharpoonup \kappa^{\infty}$ in $L^{\infty}(0,T_0 ; L^{\infty}_{\sharp})-w*,$
\item $u_n \rightharpoonup u$ in $L^{\infty}(0,T_0 ; H^1_{\sharp})-w*$ with $\partial_x u \in L^1(0,T_0; L^{\infty}_{\sharp})$
\item $z_n \rightharpoonup z^{\infty}$ in $L^2(0,T_0 ; H^1_{\sharp})-w.$
\end{enumerate}
Furthermore, introducing:
$$
\underline{\rho}_{\infty} := \liminf  \left(\inf_{(0,T_0) \times \mathbb R} \rho_{n}\right)_{n\in \mathbb N},\qquad \overline{\rho}_{\infty} := \limsup \left(\sup_{(0,T_0) \times \mathbb R} \rho_n\right)_{n\in\mathbb N} \,, 
$$ 
classical weak convergence arguments also yield that,
\begin{enumerate}[$\bullet$]
\item for a.e. $(t,x) \in (0,T_0) \times (\mathbb R/L\mathbb Z) $ there holds:
\begin{equation} \label{eq_controlrho}
\underline{\rho}_{\infty} \leq \rho \leq  \overline{\rho}_{\infty}
\end{equation}
\item there exists a constant $K_0$ depending only on $C_0$ and $\sup_{n\in \mathbb N}\|u^0_n\|_{H^1_{\sharp}}$ for which
\begin{equation} \label{eq_controlH1}
\sup_{t \in (0,T_0)} \|u(t,\cdot)\|_{H^1_{\sharp}} 
+ \int_0^{T_{0}} \|z^{\infty}(t,\cdot) \|_{H^1_{\sharp}} \leq K_0.
\end{equation}
\end{enumerate}

\subsubsection{Convergence of momentum equation}
We want now to pass to the limit in the momentum equation satisfied by $\rho_n$ and $u_n.$
To this end, we first obtain strong-compactness for two quantities. We have:
\begin{lemma}
Up to the extraction of a subsequence, we have that 
$$
u_n \to u \text{ in $C([0,T_0] ; L^2_{\sharp})$}.
$$
\end{lemma}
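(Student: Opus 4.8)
The plan is to run an Aubin--Lions--Simon type compactness argument. The uniform bound $u_n \in L^\infty(0,T_0;H^1_\sharp)$ already provides the needed spatial compactness through the compact Sobolev embedding on the torus; the only genuinely missing ingredient is a uniform control of the time derivative $\partial_t u_n$ in a suitable space, which I would obtain by reformulating the momentum equation.

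First I would rewrite the momentum equation in non-conservative form. Combining $\partial_t(\rho_n u_n) + \partial_x(\rho_n u_n^2) = \partial_x z_n$ with the continuity equation $\partial_t \rho_n + \partial_x(\rho_n u_n) = 0$ yields $\rho_n(\partial_t u_n + u_n \partial_x u_n) = \partial_x z_n$, hence
$$
\partial_t u_n = \frac{1}{\rho_n}\,\partial_x z_n - u_n\,\partial_x u_n.
$$
These manipulations are licit because, being associated with $H^1_\sharp$ initial data, the $(\rho_n,u_n)$ are in fact the strong solutions of \textbf{Theorem \ref{thm_strsol}}, so in particular $u_n \in C([0,T_0];H^1_\sharp)$.

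Next I would bound each term on the right-hand side in $L^2(0,T_0;L^2_\sharp)$. Since the HD bounds keep $\rho_n$ uniformly away from zero (indeed $\rho_n \ge 1/(2C_0)$), the factor $1/\rho_n$ is bounded in $L^\infty$, while $\partial_x z_n$ is bounded in $L^2(0,T_0;L^2_\sharp)$ because $z_n$ is bounded in $L^2(0,T_0;H^1_\sharp)$; thus $\rho_n^{-1}\partial_x z_n$ is bounded in $L^2(0,T_0;L^2_\sharp)$. For the convective term, the embedding $H^1_\sharp \hookrightarrow L^\infty_\sharp$ gives a uniform bound on $u_n$ in $L^\infty((0,T_0)\times\mathbb R)$, and $\partial_x u_n$ is bounded in $L^\infty(0,T_0;L^2_\sharp)$, so $u_n\partial_x u_n$ is bounded in $L^\infty(0,T_0;L^2_\sharp)$, a fortiori in $L^2(0,T_0;L^2_\sharp)$ on a finite interval. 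Altogether $\partial_t u_n$ is bounded in $L^2(0,T_0;L^2_\sharp)$, uniformly in $n$.

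Finally I would conclude by an Arzel\`a--Ascoli argument for Banach-valued functions. The estimate $\|u_n(t)-u_n(s)\|_{L^2_\sharp}\le |t-s|^{1/2}\|\partial_t u_n\|_{L^2(0,T_0;L^2_\sharp)}$ shows that the family $(u_n)$ is equi-H\"older (hence equicontinuous) from $[0,T_0]$ into $L^2_\sharp$, the pointwise values $u_n(t)$ being well defined since $u_n \in C([0,T_0];H^1_\sharp)$. For each fixed $t$ the set $\{u_n(t)\}_n$ is bounded in $H^1_\sharp$ and therefore relatively compact in $L^2_\sharp$. Arzel\`a--Ascoli then produces a subsequence converging in $C([0,T_0];L^2_\sharp)$, whose limit must coincide with the already-identified weak limit $u$. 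The main (and only nontrivial) step is the reformulation that produces the $L^2(0,T_0;L^2_\sharp)$ bound on $\partial_t u_n$; once that is secured, the compactness is routine.
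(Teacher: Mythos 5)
Your proof is correct and follows essentially the same route as the paper: both rewrite the momentum equation in non-conservative form to get $\partial_t u_n = -u_n\partial_x u_n + \rho_n^{-1}\partial_x z_n$, bound this in $L^2(0,T_0;L^2_{\sharp})$ using the uniform HD bounds and the embedding $H^1_{\sharp}\subset L^{\infty}_{\sharp}$, and conclude by equicontinuity plus the compact embedding $H^1_{\sharp}\subset L^2_{\sharp}$ via an Arzel\`a--Ascoli/Aubin--Lions argument. Your added justification that the manipulation is licit because the $(\rho_n,u_n)$ are strong solutions is a welcome precision the paper leaves implicit.
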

\begin{proof}
We already have that $u_n$ is bounded in $C([0,T_0] ; L^2_{\sharp}) \cap L^{\infty}(0,T_0;H^1_{\sharp})$
where $H^1_{\sharp} \subset L^2_{\sharp}$ is compact.  
Furthermore, we have from the continuity equation that:
$$
\partial_t u_n =  - u_n \partial_x u_n   + \dfrac{1}{\rho_n} \partial_x z_n.
$$
Consequently: 
\begin{multline*}
\|\partial_t u_n \|_{L^2(0,T_0;L^2_{\sharp})} \leq
 \|u_n \|_{L^{\infty}(0,T_0 ; L^{\infty}_{\sharp})}
  \|u_n \|_{L^{2}(0,T_0 ; H^{1}_{\sharp})}
+ \||\rho_n|^{-1}  \|_{L^{\infty}(0,T_0 ; L^{\infty}_{\sharp})} 
    \|\partial_x z_n \|_{L^{2}(0,T_0 ; L^2_{\sharp})}.
\end{multline*}
But, the bounds claimed above and the embedding $H^1_{\sharp} \subset L^{\infty}_{\sharp}$ yield that the right-hand side of this inequality is bounded uniformly in $n \in \mathbb N.$ Consequently, we have that $u_n$
is uniformly equicontinuous in $C([0,T_0] ; L^2_{\sharp})$ and we may extract a strongly converging subsequence.
\end{proof}

{\bf Remark.}  We can then prove that $\rho^n |u^n|^2  \to \rho |u|^2$ (in $L^2((0,T) \times (\mathbb R/L\mathbb Z)) -w$ 
for instance)  and,  if $\rho_n^0$ converges strongly to $\rho^0$, classical arguments on the dissipation estimate satisfied by $(\rho^n,u^n)$ imply that for a.e. $t \in (0,T_0)$ there holds:
\begin{equation} \label{eq_controlL2}
\int_{0}^L \left[ \dfrac {\rho(t,\cdot) |u(t,\cdot)|^2}{2} +  q^{\infty} \right]    + \int_0^t \int_0^L \mu |\partial_x u|^2   \leq  \int_{0}^L \left[ \dfrac{\rho^0|u^0|^2}{2} +  q^0 \right]  
\end{equation}
where $q^0 = q(\rho^0).$

Second, we  state the equivalent result to the viscous-flux lemma that was crucial to the proof by P.-L. {\sc Lions} \cite{Li2} and by E. {\sc Feireisl}, A. {\sc Novotn\'y} and H. {\sc Petzeltov\'a}  \cite{FeNoPe} to obtain
existence of global weak solutions to compressible Navier--Stokes systems:

\begin{lemma} \label{lem_compcomp}
Let $\beta \in C^{1}((0,\infty))$ then, up to the extraction of a subsequence, we have that
\begin{eqnarray*}
\beta(\rho_n) \rightharpoonup \beta^{\infty} & & \text{ in } L^{\infty}(0,T_0 ; L^{\infty}_{\sharp})-w*\,, \\
\beta(\rho_n) z_n \rightharpoonup \beta^{\infty} z^\infty && \text{ in $L^2((0,T_0) \times \mathbb R/L\mathbb Z)-w$}.
\end{eqnarray*}
\end{lemma}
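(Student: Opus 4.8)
The plan is to establish the second (and only non-trivial) convergence by a compensated-compactness (div--curl) argument in the space--time plane $(t,x)$, the first convergence being immediate. Indeed, since $\rho_n$ is bounded from above and away from $0$ uniformly (by \eqref{eq_bound2} and Lemma \ref{lem_rho}, so that $\rho_n$ stays in a fixed compact subset of $(0,\infty)$) and $\beta \in C^1((0,\infty))$, the sequence $\beta(\rho_n)$ is bounded in $L^\infty((0,T_0);L^\infty_\sharp)$; weak-$*$ compactness then yields a subsequence with $\beta(\rho_n) \rightharpoonup \beta^\infty$, which is the first claim.

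For the product, I would introduce the two space--time vector fields
$$
U_n = (\beta(\rho_n),\, \beta(\rho_n) u_n), \qquad V_n = (z_n,\, 0),
$$
whose Euclidean inner product is exactly $U_n \cdot V_n = \beta(\rho_n) z_n$. Two structural facts make the div--curl lemma applicable. First, the continuity equation in the form \eqref{eq_young} (with the present $\beta$) gives
$$
\mathrm{div}_{t,x}\, U_n = \partial_t \beta(\rho_n) + \partial_x(\beta(\rho_n) u_n) = -(\beta'(\rho_n)\rho_n - \beta(\rho_n))\,\partial_x u_n;
$$
since the coefficient is bounded in $L^\infty$ and $\partial_x u_n$ is bounded in $L^\infty(0,T_0;L^2_\sharp)$, the right-hand side is bounded in $L^2((0,T_0)\times \mathbb R/L\mathbb Z)$, hence precompact in $H^{-1}_{loc}$ by Rellich. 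Second, the regularity $z_n \in L^2(0,T_0;H^1_\sharp)$ from $(\mathbf{HDS})_c$ gives $\mathrm{curl}_{t,x}\, V_n = -\partial_x z_n$ bounded in $L^2$, hence likewise precompact in $H^{-1}_{loc}$.

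Both $U_n$ and $V_n$ being bounded in $L^2_{loc}$, I would pass to a further subsequence so that $U_n \rightharpoonup (\beta^\infty, w)$ and $V_n \rightharpoonup (z^\infty,0)$ weakly in $L^2_{loc}$, and then apply the div--curl lemma of Murat and Tartar (locally in space--time, which is legitimate since all quantities are $L$-periodic and the estimates are uniform) to conclude $U_n \cdot V_n \rightharpoonup \beta^\infty z^\infty + w\cdot 0 = \beta^\infty z^\infty$ in $\mathcal D'$. Note that the vanishing second component of $V_n$ makes the identification of $w$, the weak limit of $\beta(\rho_n) u_n$, unnecessary; if one prefers the more standard effective-flux variant, one may instead take the curl-free field $V_n = (z_n - \rho_n u_n^2, \rho_n u_n)$ coming from the momentum equation, in which case $w = \beta^\infty u$ is identified through the strong convergence $u_n \to u$ in $C([0,T_0];L^2_\sharp)$ and the same conclusion follows. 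Finally, since $\beta(\rho_n) z_n$ is bounded in $L^2((0,T_0)\times \mathbb R/L\mathbb Z)$ (product of an $L^\infty$ and an $L^2$ bounded sequence), the distributional limit upgrades to weak convergence in $L^2$.

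The main obstacle is conceptual rather than computational: neither factor converges strongly — $\beta(\rho_n)$ converges only weak-$*$, and $z_n$, lacking any uniform time-regularity (only $\partial_x z_n$ is controlled, not $\partial_t z_n$), converges only weakly in $L^2$ — so a naive weak-times-strong argument fails and one genuinely needs the cancellation provided by compensated compactness. The crucial quantitative input that makes the argument work is precisely the spatial $H^1$-bound on the effective flux $z_n$ furnished by Lemma \ref{lem_uniformbound}, which supplies the curl-compactness, while the continuity-equation transport identity \eqref{eq_young} supplies the matching div-compactness.
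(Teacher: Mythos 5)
Your proof is correct, and every bound you invoke is indeed available in the paper: $\rho_n$ stays in a fixed compact subset of $(0,\infty)$ (so $\beta,\beta'$ are uniformly bounded along the sequence), $\partial_x u_n$ is bounded in $L^{\infty}(0,T_0;L^2_{\sharp})$ and $z_n$ in $L^2(0,T_0;H^1_{\sharp})$ by \eqref{eq_regcthm}, so $\mathrm{div}_{t,x}U_n$ and $\mathrm{curl}_{t,x}V_n$ are bounded in $L^2$ and hence precompact in $H^{-1}_{loc}$; and the choice $V_n=(z_n,0)$ does make $U_n\cdot V_n=\beta(\rho_n)z_n$ while dispensing with any identification of the weak limit of $\beta(\rho_n)u_n$.

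The route is genuinely different from the paper's, though the two exploit the same structure. The paper does not cite Murat--Tartar; it proves the cancellation by hand. It introduces the mean-free spatial primitive $w_n=\partial_x^{-1}\left[\beta(\rho_n)-\mathbb E[\beta(\rho_n)]\right]$, uses the renormalized continuity equation \eqref{eq_young} to bound $\partial_t w_n$ in $L^{\infty}(0,T_0;L^2_{\sharp})$, deduces by an Arzel\`a--Ascoli/Aubin--Lions argument that $w_n\to w^{\infty}$ strongly in $C([0,T_0];L^2_{\sharp})$ and $\mathbb E[\beta(\rho_n)]\to\mathbb E[\beta^{\infty}]$ in $C([0,T_0])$, and then passes to the limit in $\int\!\!\int \beta(\rho_n) z_n\varphi=\int\!\!\int \partial_x w_n\, z_n\varphi+\int\!\!\int \mathbb E[\beta(\rho_n)]\,z_n\varphi$ after one integration by parts, pairing the strongly convergent $w_n$ against the weakly convergent $\partial_x z_n$. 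This is exactly your div (time-compactness of the primitive, coming from the transport equation) versus curl (the $H^1_x$ bound on $z_n$) mechanism, written out explicitly in the $1$D periodic setting. What your version buys: brevity and modularity --- the two hypotheses of the div--curl lemma isolate precisely the two structural inputs, and the observation that the second component of $V_n$ can be taken to vanish removes the need for the strong convergence of $u_n$ altogether. What the paper's version buys: it is self-contained (no need to justify the periodic/local adaptation of the div--curl lemma), and its intermediate objects and estimates --- the primitive $w_n$, the bound on $\partial_t w_n$ obtained by ``repeating the computations for $I_4$'' --- are reused elsewhere in the compactness section, notably in the Young-measure argument.
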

\begin{proof}
Under the assumptions of this lemma (and keeping the conventions of the previous section for the operator $\partial_x^{-1}$), we set:
$$
\beta_n = \beta(\rho_n)\,, \quad w_n = \partial_x^{-1} \left[ \beta_n - \mathbb E[{\beta_n}]\right].
$$ 
Then, $\beta_n$ and $w_n$ are bounded respectively in $C([0,T_0];L^2_{\sharp}) \cap L^{\infty}((0,T_0) \times \mathbb R/L\mathbb Z )$  and $C([0,T_0] ; H^1_{\sharp}).$ In particular, we may extract a subsequence s.t. $\beta(\rho^n)$ and  $\beta(\rho_n) z_n$ converge respectively in  $ L^{\infty}((0,T_0) \times \mathbb R/L\mathbb Z )-w*$ and  $L^2((0,T_0) \times \mathbb R/L\mathbb Z)-w.$
We denote $\beta^{\infty}$ the weak$-*$ limit of $\beta(\rho_n).$ Furthermore, there holds: 
$$
\partial_t w_n = \partial_x^{-1} \left[ \partial_t \beta_n - \mathbb E[{\partial_t \beta_n}]\right] 
$$
where, as previously:
\begin{eqnarray*}
\partial_t \beta_n &=& - \partial_x (\beta_n u_n) - (\beta'(\rho_n) \rho_n - \beta_n) \partial_x u_n \in L^{\infty}(0,T_0 ; H^{-1}_{\sharp}) \\
\partial_t \mathbb E[\beta_n] &=& - \mathbb E [ (\beta'(\rho_n) \rho_n - \beta_n) \partial_x u_n ] \in L^{\infty}(0,T_0).
\end{eqnarray*}
Consequently :
$$
\partial_t w_n = - \left( \beta_n u_n - \mathbb E[{\beta_n u_n}] \right) - \partial_x^{-1}    \left[(\beta'(\rho_n) \rho_n - \beta_n) \partial_x u_n  - \mathbb E[{(\beta'(\rho_n) \rho_n - \beta_n) \partial_x u_n}]  \right]
$$
and repeating the computations for $I_4$ in the previous paragraph, we obtain that $\partial_t w_n$ is also bounded in $L^{\infty}(0,T_0;L^2_{\sharp}).$
We may again extract a subsequence we do not relabel such that:
$$
w_n \rightarrow w^{\infty} =\partial_x^{-1} \left[ \beta^{\infty} - \mathbb E[{\beta^{\infty}}]\right] \text{ in $C([0,T_0] ; L^2_{\sharp})$}
$$
(as this is the only possible limit). Consequently also $\mathbb E[\beta_n] $ is bounded in $W^{1,\infty}((0,T_0))$ so that we may extract
a subsequence for which $\mathbb E[\beta_n] \to \mathbb E[\beta^{\infty}]$ in $C([0,T_0]).$

\medskip

For any $n\in \mathbb N$ and $\varphi \in C^{\infty}_c((0,T_0) \times (0,L))$ we have then:
\begin{eqnarray*}
\int_0^{T_0} \int_0^L \beta_n z_n \varphi &=& \int_0^{T_0} \int_0^L \partial_x w_n z_n \varphi + \int_0^{T_0} \int_0^{L} \mathbb E[{\beta_n}] z_n \varphi\,, \\
\end{eqnarray*}
On the one-hand, we have:
\begin{eqnarray*}
 \int_0^{T_0} \int_0^{L} \mathbb E[{\beta_n}] z_n \varphi &=& \dfrac{1}{L} \int_0^{T_0} \int_0^L \beta_n  \int_0^L z_n \varphi  \\
 			&\longrightarrow &  \dfrac{1}{L} \int_0^{T_0} \int_0^L \beta^{\infty}  \int_0^L z^\infty \varphi 
\end{eqnarray*} 
due to the strong convergence of $(\mathbb E(\beta_n))_{n\in \mathbb N}$ in $C([0,{T_0}])$ and the weak convergence
of $(\int_0^L z_n\varphi)_{n\in \mathbb N}$ in $L^2((0,{T_0})).$ 

On the other hand, there holds:
$$
\int_0^{T_0} \int_0^L \partial_x w_n z_n \varphi  = -\int_0^{T_0} \int_0^L w_n \varphi \partial_x z_n  -\int_0^{T_0} \int_0^L  w_n z_n \partial_x \varphi  
$$
Combining, the strong convergence of $w_n$ in $C([0,T_0];L^2_{\sharp})$
and the weak convergence of $z_n$  in $L^2(0,T_0 ; H^1_{\sharp})$ we also get:
$$
\int_0^{T_0} \int_0^L  \partial_xw_n  z_n  \varphi   \longrightarrow -\int_0^{T_0} \int_0^L w^{\infty} \varphi \partial_x z^{\infty} -\int_0^{T_0} \int_0^L  w^{\infty} z^{\infty} \partial_x \varphi 
$$
These computations entail finally that
\begin{eqnarray*}
\lim_{n\to \infty} \int_0^{T_0} \int_0^L \beta_n z_n \varphi &=& -\int_0^{T_0} \int_0^L w^{\infty} \varphi \partial_x z^\infty -\int_0^{T_0} \int_0^L  w^{\infty} z^{\infty} \partial_x \varphi + \int_0^{T_0} \int_0^{L} \mathbb E[{\beta^{\infty}}] w^{\infty} \varphi \\
&=& \int_0^{T_0} \int_0^L \beta^{\infty} z^\infty \varphi.
\end{eqnarray*}
This completes the proof.
\end{proof}

We can now pass to the limit in the equations satisfied by $(\rho_n,u_n).$ 

\begin{proposition}
We have in $\mathcal D'((0,T_0) \times \mathbb R):$
$$
\partial_t (\rho u) + \partial_x (\rho u^2) = \partial_x \left[ m^{\infty} (\partial_x u  - \kappa^{\infty})\right].
$$
where 
$$
m^{\infty} = \left[{\lim \dfrac{1}{\mu(\rho_n)}}\right]^{-1}.
$$
\end{proposition}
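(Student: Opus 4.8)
**The plan is to pass to the limit term by term in the weak formulation of the momentum equation, using the strong $L^2$-convergence of $u_n$ to handle the convective terms and Lemma \ref{lem_compcomp} (the compactness compensation lemma applied to $\beta = 1/\mu$) to identify the limit of the viscous flux.** The starting point is the weak formulation of \eqref{eq_NSC}: for any test function $\varphi \in C^\infty_c((0,T_0) \times \mathbb R)$, $L$-periodic in space,
\begin{equation*}
-\int_0^{T_0}\int_0^L \rho_n u_n \, \partial_t \varphi - \int_0^{T_0}\int_0^L \rho_n u_n^2 \, \partial_x \varphi = -\int_0^{T_0}\int_0^L z_n \, \partial_x \varphi,
\end{equation*}
where $z_n = \mu_n \partial_x u_n - p_n$. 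The right-hand side passes trivially to the limit by the weak convergence $z_n \rightharpoonup z^\infty$ in $L^2$. The convective terms on the left are the delicate ones, so I would treat them first.

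For the time-derivative term, I would write $\rho_n u_n$ and note that $u_n \to u$ strongly in $C([0,T_0];L^2_\sharp)$ by the preceding lemma, while $\rho_n \rightharpoonup \rho$ weak-$*$ in $L^\infty$; the product of a weak-$*$ convergent bounded sequence with a strongly convergent one converges weakly, giving $\rho_n u_n \rightharpoonup \rho u$. For the flux term $\rho_n u_n^2$, I would use that $u_n^2 \to u^2$ strongly in, say, $L^2((0,T_0)\times(\mathbb R/L\mathbb Z))$ — this follows from the strong $L^2$-convergence of $u_n$ together with its uniform $L^\infty$-bound coming from $H^1_\sharp \subset L^\infty_\sharp$ — again paired against the weak-$*$ limit $\rho_n \rightharpoonup \rho$, so that $\rho_n u_n^2 \rightharpoonup \rho u^2$. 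These two passages reconstruct $\partial_t(\rho u) + \partial_x(\rho u^2)$ in the distributional sense.

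The genuinely nontrivial step is the viscous flux. Here I cannot simply pass to the limit in $z_n = \mu_n \partial_x u_n - p_n$ componentwise, because $\mu_n \partial_x u_n$ is a product of two merely weakly convergent factors. Instead I would rewrite $\partial_x u_n = (z_n + p_n)/\mu_n = \frac{1}{\mu_n} z_n + \kappa_n$, so that
\begin{equation*}
\partial_x u_n - \kappa_n = \frac{1}{\mu_n} z_n.
\end{equation*}
Applying Lemma \ref{lem_compcomp} with the choice $\beta = 1/\mu$ (which is $C^1$ on $(0,\infty)$ and bounded thanks to \eqref{eq_assummu} and the uniform density bounds) gives precisely the compensated-compactness identity
\begin{equation*}
\frac{1}{\mu_n}\, z_n \;\rightharpoonup\; \Big(\lim \tfrac{1}{\mu(\rho_n)}\Big)\, z^\infty \;=\; \frac{z^\infty}{m^\infty} \quad \text{in } L^2\!-\!w,
\end{equation*}
with $m^\infty := [\lim 1/\mu(\rho_n)]^{-1}$ the weak-$*$ limit of $1/\mu_n$ inverted. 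Therefore $\partial_x u - \kappa^\infty = z^\infty/m^\infty$ in the weak limit, i.e. $z^\infty = m^\infty(\partial_x u - \kappa^\infty)$. Substituting this identification of $z^\infty$ into the already-obtained limit equation $\partial_t(\rho u) + \partial_x(\rho u^2) = \partial_x z^\infty$ yields the stated conclusion.

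The main obstacle is, as always in compressible fluid limits, the identification of the nonlinear viscous flux, and the crux is that Lemma \ref{lem_compcomp} has been engineered to deliver exactly the commutation $\beta(\rho_n) z_n \rightharpoonup \beta^\infty z^\infty$ that defeats the weak-weak product. Everything else reduces to the standard pairing of weak-$*$ and strong limits, relying essentially on the strong $L^2$-compactness of $u_n$; I would also double-check that the passage to the distributional limit is legitimate on the whole line using the periodic conventions (so that convergence against compactly supported $\varphi$ suffices to characterize the periodic distributional equation).
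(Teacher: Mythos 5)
Your proof is correct and follows essentially the same route as the paper: limit passage in the convective terms via the strong convergence of $u_n$ in $C([0,T_0];L^2_{\sharp})$ paired with the weak$-*$ convergence of $\rho_n$, weak convergence of $z_n$ for the stress term, and then identification of $z^{\infty}$ by rewriting $\partial_x u_n = z_n/\mu_n + \kappa_n$ and applying Lemma \ref{lem_compcomp} with $\beta = 1/\mu$. The paper's own proof proceeds by exactly this decomposition, so there is nothing to add.
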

\begin{proof}
We recall that, for any given $n\in\mathbb N$ there holds:
$$
\partial_t (\rho_n u_n) + \partial_x (\rho_n |u_n|^2) = \partial_x z_n.
$$
Combining the weak convergences of $\rho_n$ and $u_n$ and the strong convergence of $u_n,$ we obtain that 
\begin{enumerate}[$\bullet$]
\item $\rho_n u_n \rightharpoonup \rho u$ in $L^2((0,T_0) \times (\mathbb R/L\mathbb Z))-w,$
\item $u_n \to u$ in $L^4((0,T_0) \times (\mathbb R/L\mathbb Z) )$ or $|u_n|^2 \to u^2$ in $L^2((0,T_0) \times (\mathbb R/L\mathbb Z) )$,
\item $\rho_n |u_n|^2 \to \rho u^2 $ in $L^2((0,T_0) \times (\mathbb R/L\mathbb Z))-w.$
\end{enumerate}
This enables to pass to the limit in the right-hand side:
$$
\partial_t (\rho_n u_n) + \partial_x (\rho_n |u_n|^2) \rightharpoonup \partial_t (\rho u) + \partial_x (\rho u^2) \text{ in $\mathcal D'((0,T_0) \times \mathbb R)$} 
$$
On the right-hand side we have that $z_n \rightharpoonup z^\infty$ so that:
$$
\partial_t (\rho u) + \partial_x (\rho u^2) = \partial_x z^\infty.
$$
It remains to compute $z$ in terms of $\rho$ and $u.$ We have, for fixed $n\in \mathbb N:$
$$
\partial_x u_n = \dfrac{z_n}{\mu_n} + \kappa(\rho_n). 
$$
Passing to the limit in this identity (in $L^2((0,T_0) \times (\mathbb R/L\mathbb Z))-w$ for instance), we get, thanks to the previous lemma:
$$
\partial_x u = \lim \left[ \dfrac{1}{\mu(\rho_n)}\right] z^{\infty}+ \kappa^{\infty} \text{ or } z^{\infty} = m^{\infty} \left( \partial_x u - \kappa^{\infty}\right).
$$
This ends the proof of this proposition.
\end{proof}

As classical in these compactness arguments, the main difficulty now is to find a relation between 
$\mu^{\infty},$  $\kappa^{\infty}$ and $\rho.$ In full generality, this is not possible:
the operators "$\lim$" and  the operator "composition by a continuous function $\beta$" do not commute.
To analyze more precisely the commutators, we apply Young-measure theory.

\subsubsection{Compactness of Young measures.}
For a given $n \in \mathbb N$ we introduce the young measure $\nu_n := \nu_{\rho_n}.$
From the regularity  $\rho_n \in C([0,T_0] ; L^1_{\sharp})$ we deduce that  $\nu_n \in C([0,T_0] ; \mathcal Y_{\sharp}-w*).$
We state then
\begin{proposition}
There exists a subsequence we do not relabel such that  $\nu_n \rightharpoonup \nu$  in $C([0,T_0] ; \mathcal Y_{\sharp}-w*)$
Furthermore, $\nu$ is a solution to:
\begin{equation} \label{eq_young3}
\partial_t \nu + \partial_x (\nu u) -  \left( \partial_{\xi}\left(\xi \dfrac{\nu}{\mu(\xi)} \right) + \dfrac{\nu}{\mu(\xi)} \right) z^{\infty}  - \left(\partial_{\xi}\left(\frac{\xi p(\xi)\nu}{\mu(\xi)}\right) + \frac{\nu p(\xi)}{\mu(\xi)}\right)  = 0
\end{equation}
in $\mathcal D'((0,T_0) \times \mathbb R \times (0,2C_0))$ with initial condition:
\begin{equation} \label{eq_young3bis}
\nu(0,\cdot) = \sum_{i=1}^k \alpha_i^0 \delta_{\rho_i^0} .
\end{equation}
\end{proposition}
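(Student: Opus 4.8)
The plan is to obtain the convergence by an Arzel\`a--Ascoli argument in the time variable and then pass to the limit in the weak (kinetic) formulation, the effective-flux commutation of {\bf Lemma \ref{lem_compcomp}} being the decisive ingredient. On bounded subsets the weak-$*$ topology on $\mathcal Y_\sharp$ is metrizable, so it suffices to check that $\{\nu_n(t)\}_n$ is relatively compact for each fixed $t$ (which is immediate, since these are $L$-periodic Young measures with fixed first marginal, hence a weak-$*$ bounded and weak-$*$ closed family) together with uniform-in-$n$ equicontinuity in time. For the latter I would test $\nu_n$ against a tensor product $\phi(x)\beta(\xi)$ with $\phi\in C^\infty_\sharp$ and $\beta\in C^\infty$, so that $\langle\nu_n(t),\phi\beta\rangle=\int_0^L\phi\,\beta(\rho_n(t,\cdot))$, and differentiate in $t$ using the transport identity \eqref{eq_young} together with the relation $\partial_x u_n=z_n/\mu(\rho_n)+\kappa(\rho_n)$. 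After one integration by parts in $x$ this gives
\begin{equation*}
\frac{\mathrm d}{\mathrm dt}\langle\nu_n,\phi\beta\rangle
= \int_0^L \partial_x\phi\,\beta(\rho_n)\,u_n
- \int_0^L \phi\, g_\beta(\rho_n)\,z_n
- \int_0^L \phi\,(\rho_n\beta'(\rho_n)-\beta(\rho_n))\,\kappa(\rho_n),
\end{equation*}
where $g_\beta(\xi)=(\xi\beta'(\xi)-\beta(\xi))/\mu(\xi)$. Since $u_n$ is bounded in $L^\infty(0,T_0;H^1_\sharp)\hookrightarrow L^\infty(0,T_0;L^\infty_\sharp)$, the densities range in a fixed compact interval, and $z_n$ is bounded in $L^2(0,T_0;H^1_\sharp)$, the right-hand side is bounded in $L^2(0,T_0)$ uniformly in $n$; hence $t\mapsto\langle\nu_n(t),\phi\beta\rangle$ is uniformly bounded in $H^1(0,T_0)\hookrightarrow C^{0,1/2}([0,T_0])$. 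Running this over a countable family of test functions dense for the metric and invoking Arzel\`a--Ascoli produces the subsequence converging in $C([0,T_0];\mathcal Y_\sharp-w*)$ to a limit $\nu$.

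The second step is to pass to the limit in the displayed identity in $\mathcal D'((0,T_0)\times\mathbb R\times(0,2C_0))$, for which it suffices to test against products $\chi(t)\phi(x)\beta(\xi)$. The first term converges to $\int_0^L\partial_x\phi\,u\,\langle\nu,\beta\rangle_\xi$ by combining the strong convergence of $u_n$ in $C([0,T_0];L^2_\sharp)$ with the weak-$*$ convergence of $\beta(\rho_n)$; the third term converges to $-\int_0^L\phi\,\langle\nu,(\xi\beta'-\beta)\kappa\rangle_\xi$ by plain Young-measure convergence, the integrand being a fixed continuous function of the density. Carrying out the $\xi$-integrations by parts, these two contributions reproduce exactly $\partial_x(\nu u)$ and the term $\partial_\xi(\xi p\nu/\mu)+p\nu/\mu$ appearing in \eqref{eq_young3}.

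The heart of the matter, and the step I expect to be the main obstacle, is the middle term $\int_0^L\phi\,g_\beta(\rho_n)\,z_n$: both factors converge only weakly, so \emph{a priori} there is no reason for the product to converge to the product of the limits. This is precisely where the effective-flux structure is used. Since $g_\beta\in C^1$ (recall that $\mu\in C^1$ with $\mu\ge\mu^0(1+\sqrt s)$ by \eqref{eq_assummu}, so $1/\mu\in C^1$), {\bf Lemma \ref{lem_compcomp}} applies and yields $g_\beta(\rho_n)z_n\rightharpoonup\langle\nu,g_\beta\rangle_\xi\,z^\infty$, whence the term converges to $-\int_0^L\phi\,\langle\nu,g_\beta\rangle_\xi\,z^\infty$; after the $\xi$-integration by parts this is exactly the contribution of $(\partial_\xi(\xi\nu/\mu)+\nu/\mu)z^\infty$ in \eqref{eq_young3}. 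Assembling the three limits gives the announced kinetic equation. Finally, the initial condition \eqref{eq_young3bis} follows from the convergence in $C([0,T_0];\mathcal Y_\sharp-w*)$, which forces $\nu(0,\cdot)=\lim_n\nu_{\rho_n^0}=\nu^0=\sum_{i=1}^k\alpha_i^0\delta_{\rho_i^0}$ by the assumed convergence of the initial data.
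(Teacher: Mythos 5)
Your proof is correct and takes essentially the same route as the paper: equicontinuity in time of $t\mapsto\langle\nu_n(t),\phi\rangle$ plus an Arzel\`a--Ascoli/diagonal argument for the convergence in $C([0,T_0];\mathcal Y_{\sharp}-w*)$, the substitution $\partial_x u_n = z_n/\mu(\rho_n)+\kappa(\rho_n)$ to expose the effective flux, and {\bf Lemma \ref{lem_compcomp}} applied to the weak--weak product $g_\beta(\rho_n)z_n$, which is exactly the paper's key step. The only immaterial differences are that the paper establishes equicontinuity before substituting $z_n$, using the bound $\partial_x u_n \in L^1(0,T_0;L^{\infty}_{\sharp})$, and works with general rather than tensorized test functions.
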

\begin{proof}
By construction, $\nu_n \in C([0,T_0] ; \mathcal Y_{\sharp}-w*)$ and is a bounded sequence in this space.
Then,  we rewrite \eqref{eq_young}:
\begin{equation} \label{eq_young2} 
\partial_t \nu_n + \partial_x (\nu_n u_n) -  (\partial_{\xi}(\xi \nu_n) + \nu_n) \partial_x u_n = 0 \quad \text{ in $\mathcal D'((0,T_0) \times \mathbb R \times (0,2C_0))$}. 
\end{equation}
For arbitrary  $\phi \in C^{\infty}_c(\mathbb R \times (0,2C_0))$ there holds:
$$
\partial_t \langle \nu_n , \phi \rangle =  \langle \nu_n , u_n \partial_x \phi \rangle + \langle \nu_n ,  \phi \partial_x u_n \rangle - \langle \nu_n , \xi  \partial_x u_n \partial_{\xi}\phi\rangle.
$$
We recall here that $u_n$ is bounded in $L^{\infty}((0,T); C_{\sharp})$ and that $\partial_x u_n$ is bounded also in the space $L^{1}(0,T ; L^{\infty}_{\sharp}).$  Hence, for arbitrary $\phi \in  C^{\infty}_c( \mathbb R \times (0,2C_0))$ we have that  $(\langle \nu_n , \phi \rangle)_{n\in \mathbb N}$ is relatively compact in  $C([0,T_0]).$

As  the weak$-*$ convergence on $\mathcal Y_{\sharp}$ measures only the weak$-*$ convergences in all the  $C_c((-N,N) \times (0,2C_0))^*$ (for  $N \in  \mathbb N$, since $\nu_n$ has support in $\mathbb R \times [1/C_0,C_0])$)  which  admits a denumerable dense set of functions
belonging to $C^{\infty}_c( (-N,N) \times (0,2C_0)),$ we may apply a classical argument to obtain that, up to the extraction of a subsequence, $\nu_n$ converges in $C([0,T_0] ; \mathcal Y_{\sharp}-w*).$

Then, we rewrite equivalently \eqref{eq_young2} as:
$$
\partial_t \nu_n + \partial_x (\nu_n u_n) -  \left( \partial_{\xi}\left(\xi \dfrac{\nu_n}{\mu(\xi)} \right) + \dfrac{\nu_n}{\mu(\xi)} \right) z_n  - \left(\partial_{\xi}\left(\frac{\xi p(\xi)\nu_n}{\mu(\xi)}\right) + 
  \frac{\nu_n p(\xi)}{\mu(\xi)}\right)  = 0
$$
Combining the weak convergence of $\nu_n,z_n,$ the strong convergence of $u_n$ and {\bf Lemma \ref{lem_compcomp}} we may pass to the limit in this equation and obtain \eqref{eq_young3}.
\end{proof}

To end the proof of {\bf Theorem \ref{thm_existence1}} we remark that \eqref{eq_young3}-\eqref{eq_young3bis} enters the framework of {\bf Appendix \ref{app_young}}. 
Indeed, we rewrite \eqref{eq_young3} as 
$$
\partial_t \nu + \partial_x (\nu u_x) + \partial_{\xi} (\nu u_{\xi}) + g \nu = 0
$$
with, thanks to the previous arguments  $u_x = u(t,x) \in C([0,T_0] \times \mathbb R/L\mathbb Z)$ s.t.:
$$
 \partial_x u_x = \dfrac{z^{\infty}}{m^{\infty}} + \kappa^{\infty} \in L^{1}(0,T_0;L^{\infty}_{\sharp}) \,,  \quad \partial_{\xi} u_{x} = 0\,.
$$
and (note that $z^{\infty} \in L^2(0,T_0; H^1(\mathbb R/L\mathbb Z)) \subset L^1 (0,T_0; C(\mathbb R/L\mathbb Z))$  ):
\begin{align*}
& u_{\xi} = - \left( \dfrac{\xi}{\mu(\xi)} z^{\infty}(t,x) + \frac{\xi p(\xi)}{\mu(\xi)}\right) \in L^{1}(0,T_0 ;C(\mathbb R/L\mathbb Z \times [0,2C_0])) 
\end{align*}
such that:
\begin{align*}
&\partial_{\xi} u_{\xi} \in L^{1}(0,T_0;L^{\infty}(\mathbb R/L\mathbb Z \times (0,2C_0)))  ; \\[4pt]
& \int_0^T \int_0^L \sup_{\xi \in [0,2C_0]} |\partial_x u_{\xi}(t,x,\xi)|{\rm d}\xi {\rm d}x < \infty\,.
\end{align*}
Finally, we have:
\begin{align*}
&g = - \left( \dfrac{z^{\infty}(t,x) }{\mu(\xi)} +  \frac{p(\xi)}{\mu(\xi)}\right)\in L^{1}(0,T_0 ;C(\mathbb R/L\mathbb Z \times [0,2C_0]))  
\end{align*}
such that:
\begin{align*}
&\partial_{\xi} g \in L^{1}(0,T_0;L^{\infty}(\mathbb R/L\mathbb Z \times (0,2C_0)))  , \\[4pt]
& \int_0^T \int_0^L \sup_{\xi \in [0,2C_0]} |\partial_x g(t,x,\xi)|{\rm d}\xi {\rm d}x < \infty\,.
\end{align*}
Hence,  {\bf Appendix \ref{app_young}} ensures that $\nu$ is the unique solution to \eqref{eq_young3}-\eqref{eq_young3bis} and that it writes as a convex combination of $k$ Dirac measures. 
Plugging formally  $\nu = \sum_{i=1}^k \alpha_i \delta_{\xi = \rho_i}$ in \eqref{eq_young3}-\eqref{eq_young3bis} we get that the $(\alpha_i,\rho_i)$ are solutions of the expected pde system. 
We note that these equations are actually satisfied by construction (see the proof of {\bf Lemma \ref{lem_youngcons}}). This ends the proof of {\bf Theorem \ref{thm_multi}}.

Let mention that, in the particular case $k=1,$ we recover that $\alpha_1 = 1$ and that $(\rho^1,u) = (\rho,u)$ satisfies \eqref{eq_NSC}-\eqref{eq_ee}. We also have that \eqref{eq_controlrho}-\eqref{eq_controlL2}-\eqref{eq_controlH1} imply that \eqref{eq_regathm}-\eqref{eq_regbthm}-\eqref{eq_regcthm} holds true on $(0,T_0)$. This completes the proof of 
 {\bf Theorem \ref{thm_existence1}}.

\appendix
 
\section{Complementary result on transport equation} \label{app_young}
In this section we consider periodic  young-measures solution to the transport equation:
\begin{equation} \label{eq_youngapp}
\partial_t \nu +{\rm div}(u\nu) + g \nu = 0,
\end{equation}
in $\mathcal D'((0,T) \times \mathbb R \times (0,M)),$ with initial condition:
\begin{equation} \label{eq_youngappini}
\nu(0,\cdot) = \nu^0.
\end{equation} 
For legibility, we turn to notations $(x_1,x_2)$ for space variables and $u=(u_1,u_2)$ for velocity-fields.
Throughout this appendix, we assume that this velocity-field satisfies:
\begin{itemize}
\item $u_1   \in L^1(0,T ; C(\mathbb R /L\mathbb Z \times [0,M]))$ with:
\begin{align}
& \partial_1 u_1 \in L^1((0,T) ; L^{\infty}((\mathbb R /L\mathbb Z) \times (0,M)));  \label{eq_hypu11}\\
& \partial_{2} u_{1} = 0  \quad \text{ a.e.}\label{eq_hypu12} 
\end{align}
\item $u_{2} \in L^1((0,T) ; C((\mathbb R /L\mathbb Z) \times [0,M]))$ with:
\begin{align}
& \int_0^T \int_{0}^L \sup_{x_2 \in [0,M]} |\partial_1 u_2(t,x_1,x_2)|{\rm d}x_1 {\rm d}t < \infty \label{eq_hypu21}\\[8pt]
&\partial_{2} u_{2} \in L^1((0,T) ; L^{\infty}((\mathbb R /L\mathbb Z) \times (0,M))). \label{eq_hypu22}
\end{align}
\end{itemize}
As for the source term $g$, we assume that
\begin{itemize}
\item $g  \in  L^1(0,T ; C^{0}(\mathbb R/L\mathbb Z \times [0,M]) )$ with
\end{itemize}
\begin{align}
& \int_0^T \int_{0}^L \sup_{x_2 \in [0,M]} |\partial_1 g(t,x_1,x_2)| {\rm d}x_1 {\rm d}t < \infty \label{eq_hypg1}\\[8pt] 
&\partial_{2} g \in L^1((0,T) ; L^{\infty}((\mathbb R /L\mathbb Z) \times (0,M))). \label{eq_hypg2}
\end{align}

We first obtain a uniqueness result:

\begin{lemma}
For arbitrary  $\nu^0 \in \mathcal Y_{\sharp}$  and  $\mathcal K  \Subset \mathbb R/L\mathbb Z \times (0,M),$  
there exists $T_* \leq T$ such that \eqref{eq_youngapp}-\eqref{eq_youngappini} admits at most one solution $\nu \in C([0,T_*] ; \mathcal Y_{\sharp}-w*)$ with support in $\mathcal K.$
\end{lemma}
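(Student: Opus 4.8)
The plan is to argue by \emph{duality}, turning the uniqueness statement into the solvability of a backward adjoint transport problem along characteristics. By linearity of \eqref{eq_youngapp}, it suffices to show that if $\tilde\nu := \nu_1 - \nu_2$ is the difference of two solutions in $C([0,T_*];\mathcal Y_\sharp-w*)$ sharing the datum $\nu^0$ and both supported in $\mathcal K$, then $\tilde\nu \equiv 0$. Fixing a terminal time $t_1 \in (0,T_*]$ and a test function $\psi \in C^\infty_c(\mathbb R/L\mathbb Z \times (0,M))$, I would produce a solution $\phi$ of the adjoint equation $\partial_t \phi + u_1 \partial_1 \phi + u_2 \partial_2 \phi = g\,\phi$ with terminal condition $\phi(t_1,\cdot)=\psi$, and then check that $t \mapsto \langle \tilde\nu(t), \phi(t,\cdot)\rangle$ is constant. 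Since $\tilde\nu(0)=0$, this forces $\langle \tilde\nu(t_1),\psi\rangle = 0$, and the arbitrariness of $\psi$ and $t_1$ yields $\tilde\nu \equiv 0$.

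The construction of $\phi$ is where I would exploit the triangular structure encoded in \eqref{eq_hypu12}. Because $u_1$ does not depend on $x_2$, the characteristic system decouples: first solve the one-dimensional flow $\dot X_1 = u_1(s,X_1)$, well posed with a Lipschitz flow by \eqref{eq_hypu11} and a Gronwall estimate; then, along each such curve, solve $\dot X_2 = u_2(s, X_1(s), X_2)$, again well posed thanks to \eqref{eq_hypu22}. Writing $X(\cdot\,;t,x)$ for the characteristic issued from $(t,x)$, I set
\[
\phi(t,x) = \psi\big(X(t_1;t,x)\big)\exp\left(-\int_t^{t_1} g\big(s, X(s;t,x)\big)\,\mathrm ds\right).
\]
The mixed-integrability hypotheses \eqref{eq_hypu21} and \eqref{eq_hypg1}, together with \eqref{eq_hypu22} and \eqref{eq_hypg2}, guarantee that the flow and the exponential weight are Lipschitz in $x$ and absolutely continuous in $t$, so that $\phi$ is Lipschitz and solves the adjoint equation along characteristics. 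The time $T_*$ is then chosen small, depending on $\mathcal K$ and on the $L^1$-in-time norms of the coefficients, so that the backward flow of any $\psi$ supported in a fixed neighborhood of $\mathcal K$ keeps $\phi(t,\cdot)$ compactly supported inside $\mathbb R/L\mathbb Z \times (0,M)$, away from the edges $x_2=0$ and $x_2=M$; this confinement is exactly what restricts the argument to a local time interval and explains the appearance of $T_*$.

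It remains to justify that $\tfrac{\mathrm d}{\mathrm dt}\langle \tilde\nu(t), \phi(t,\cdot)\rangle = 0$. Formally this is immediate: differentiating and substituting the equation for $\tilde\nu$ and the adjoint equation for $\phi$, the terms $\langle \tilde\nu, u\cdot\nabla\phi\rangle$ and $\langle \tilde\nu, g\phi\rangle$ cancel exactly. The genuine difficulty, which I expect to be the main obstacle, is that $\phi$ is only Lipschitz and hence not an admissible test function for \eqref{eq_youngapp}, which holds merely in $\mathcal D'$. I would resolve this by regularization: mollify $\phi$ in $x$ (and, if needed, in $t$) to obtain smooth $\phi_\varepsilon$, insert $\phi_\varepsilon$ into the weak formulation, and control the commutator between mollification and the transport operator. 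The hypotheses \eqref{eq_hypu11}--\eqref{eq_hypg2} are precisely of DiPerna--Lions type, and the triangular structure lets one treat the $x_1$ and $x_2$ directions successively, so that each commutator is bounded by the corresponding $L^1_tL^\infty_x$ derivative norm and vanishes as $\varepsilon \to 0$. Passing to the limit yields the conservation of $\langle \tilde\nu(t), \phi(t,\cdot)\rangle$, and therefore the claimed uniqueness.
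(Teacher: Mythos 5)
Your overall duality strategy---reduce to showing that the difference $\tilde\nu$ of two solutions vanishes, build a backward adjoint solution $\phi$, and exploit conservation of $t\mapsto\langle\tilde\nu(t),\phi(t,\cdot)\rangle$---is the same as the paper's, and so is your use of the triangular structure \eqref{eq_hypu12} to decouple the characteristics. However, there is a genuine gap at the heart of your construction: the claim that hypotheses \eqref{eq_hypu21}, \eqref{eq_hypg1}, \eqref{eq_hypu22}, \eqref{eq_hypg2} make the backward flow, hence $\phi$, Lipschitz in $x$ is false. Hypothesis \eqref{eq_hypu21} controls $\partial_1 u_2$ only in $L^1$ with respect to $x_1$ (uniformly in $x_2$); it does \emph{not} make $u_2$ Lipschitz in $x_1$, so the dependence of the second characteristic on the starting point $x_1$ (through $X_1(\cdot)$) is not Lipschitz, and neither is $\phi$. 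This is not a technicality to be mollified away afterwards: it is precisely the difficulty the lemma addresses (with Lipschitz coefficients the statement would be classical). The paper's proof makes this quantitative: even with \emph{mollified} coefficients, the $x_1$-derivative of the adjoint solution blows up, $\sup_{s}\|\partial_1\varphi^{\varepsilon}(s,\cdot)\|_{L^{\infty}}\leq C/\sqrt{\varepsilon}$, as a consequence of \eqref{eq_divug}, and the argument survives only through an anisotropic balance: since $\partial_2 u_1=0$, the full gradient $\nabla u_1$ lies in $L^1(0,T;L^{\infty})$, whence $\|u_1-u_1^{\varepsilon}\|_{L^1(0,T;L^{\infty})}\leq C\varepsilon$ by \eqref{eq_convu1}, so that the product $O(\varepsilon)\cdot O(\varepsilon^{-1/2})$ vanishes, while $\partial_2\varphi^{\varepsilon}$ stays uniformly bounded because $\partial_2 u_1^{\varepsilon}=0$ closes its equation with the uniformly bounded coefficients of \eqref{eq_unifug}.

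Your proposed repair---mollify the test function $\phi$ and control DiPerna--Lions commutators---also fails for a structural reason. First, your claim that ``each commutator is bounded by the corresponding $L^1_t L^{\infty}_x$ derivative norm'' misreads the hypotheses: $\partial_1 u_2$ and $\partial_1 g$ are \emph{not} in $L^1_t L^{\infty}_x$, only in the integrated-in-$x_1$ space of \eqref{eq_hypu21} and \eqref{eq_hypg1}. Second, and more fundamentally, DiPerna--Lions commutator estimates give convergence in $L^1_{loc}$, which suffices when the transported object is an $L^p$ function, but here $\tilde\nu$ is merely a measure: to pass to the limit in $\langle\tilde\nu,\text{commutator}\rangle$ you need \emph{uniform} convergence of the commutator on the support of $\tilde\nu$, and that is exactly what is unavailable when $\partial_1 u_2$ is only $L^1$ in $x_1$. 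The paper circumvents both issues by mollifying the \emph{coefficients} rather than the test function: one solves the adjoint equation exactly with $(u^{\varepsilon},g^{\varepsilon})$, so that $\varphi^{\varepsilon}$ is an honest $C^1$ test function admissible in the extended weak formulation \eqref{eq_wfode}, and the resulting error terms $\langle\nu,(u-u^{\varepsilon})\cdot\nabla\varphi^{\varepsilon}-(g-g^{\varepsilon})\varphi^{\varepsilon}\rangle$ are then estimated in sup-norm via the anisotropic bounds \eqref{eq_convu2}, \eqref{eq_convu1}, \eqref{eq_unifug}, \eqref{eq_divug}. Any correct completion of your plan would have to be rebuilt along these lines.
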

\begin{proof}
We provide a proof with a duality-regularization argument. By difference, we prove that if $\nu$ satisfies :
\begin{itemize}
\item $\nu$ is a continuous function on $[0,T_*]$  with values in periodic measures on $\mathbb R \times [0,M]$ (endowed with the $w*$-topology) 
\item $\nu_{t}$ has support in $\mathcal K \Subset \mathbb R/L\mathbb Z \times (0,M)$ for arbitrary $t \in (0,T)$ and $\nu_{|_{t=0}} =0$ 
\item for arbitrary $\varphi \in \mathcal D((0,T) \times \mathbb R \times (0,M))$ we have:
$$
\int_0^T \langle \nu , \partial_t \varphi   + u \cdot \nabla \varphi - g \varphi \rangle   = 0\,,
$$
\end{itemize}
then $\nu$ vanishes globally on $[0,T].$ 

First, by a standard regularization argument, we have that, for arbitrary $t \in [0,T]$ and $\varphi \in W^{1,1}([0,t] ; C^1_c(\mathbb R \times (0,M)))$ there holds:
\begin{equation} \label{eq_wfode}
\langle \nu_t , \varphi(t,\cdot) \rangle = -  \int_0^t \langle \nu_s , \partial_t \varphi   + u \cdot \nabla \varphi - g \varphi \rangle {\rm d}s.
\end{equation}
We also fix $\mathcal K'$ containing strictly $\mathcal K$ with $\mathcal K' \Subset  (\mathbb R/L\mathbb Z) \times (0,M)$  and remark that there exists $T_*$ depending
on $u$ and $\mathcal K'$ for which any characteristics $\Gamma$ of the flow associated to $u$ crossing $\mathcal K'$  on $[0,T_*]$ satisfies $\Gamma \Subset  (\mathbb R/L\mathbb Z) \times (0,M).$

Then, we introduce mollified velocities and source term $(u^{\varepsilon},g^{\varepsilon})_{\varepsilon >0}$ obtained by convolution with  tensorized mollifiers $(\rho_{\varepsilon})_{\varepsilon >0}.$
Given the assumed regularity on $u$ and $g$ we have that $(u^{\varepsilon},g^{\varepsilon}) \in L^1(0,T ; C^1((\mathbb R/L\mathbb Z) \times [0,M]) ). $
We shall use the following convergence afterwards:
\begin{itemize}
\item we have the classical convergences 
\begin{equation} \label{eq_convu2}
\|u_2^{\varepsilon} - u_2 \|_{L^1(0,T; L^{\infty})} +  \|g^{\varepsilon} - g\|_{ L^1(0,T; L^{\infty} )} = 0.
\end{equation}
\item thanks to \eqref{eq_hypu11}-\eqref{eq_hypu12} we have  $\nabla u_1 \in L^1(0,T; L^{\infty}(\mathbb R/L\mathbb Z \times (0,M)))$ and:
\begin{equation} \label{eq_convu1}
\|u_1^{\varepsilon} - u_1 \|_{L^1(0,T; L^{\infty} )}  \leq C \varepsilon
\end{equation}
\item applying \eqref{eq_hypu21}-\eqref{eq_hypg1} in the computations of $\partial_2 g^{\varepsilon}$ and $\partial_2 u_2^{\varepsilon}$ and $\nabla u_1$
we obtain the uniform bounds:
\begin{equation} \label{eq_unifug}
\|\partial_2 u_2^{\varepsilon}   \|_{L^1(0,T; L^{\infty})} 
+  \|\partial_2 g^{\varepsilon}  \| _{ L^1(0,T; L^{\infty}) }
 + \|\nabla u_1^{\varepsilon}  \|_{L^1(0,T; L^{\infty})} \leq C.
\end{equation}
\item applying \eqref{eq_hypu21}-\eqref{eq_hypg1} in the computations of $\partial_1 g^{\varepsilon}$ and $\partial_1 u_2^{\varepsilon}$
we obtain the divergences:
\begin{equation} \label{eq_divug}
\|\partial_1 u_2^{\varepsilon}  \|_{L^1(0,T; L^{\infty})}
+  \|\partial_1 g^{\varepsilon} \|_{ L^1(0,T; L^{\infty})}   \leq \dfrac{C}{\sqrt{\varepsilon}}.
\end{equation}
\end{itemize}

Let now $t \in (0,T_*)$ and  $\varphi^{\sharp} \in C^1_c(\mathbb R \times (0,M))$ with support in $\mathcal K'$  we construct now  $\varphi^{\varepsilon}$
solution  to 
\begin{align*}
\partial_t \varphi^{\varepsilon} + u^{\varepsilon} \cdot \nabla \varphi^{\varepsilon}  &=  g^{\varepsilon} \varphi^{\varepsilon} &  \text{ on $(0,t) \times \mathbb R  \times (0,M)\,,$}\\
\varphi(t,\cdot) &= \varphi^{\sharp} & \text{ on $ \mathbb R  \times (0,M)\,.$}
\end{align*}
Classical results on convection equations yield that $\varphi^{\varepsilon}$ has the requested regularity to be a test-function in \eqref{eq_wfode} for $\varepsilon$ sufficiently small. 
In particular, the convergence of the flow associated with $u^{\varepsilon}$ towards the flow associated with $u$ ensures that,  for $\varepsilon$ sufficiently small, $\varphi^{\varepsilon}(s,\cdot)$ 
has compact support in $\mathbb R \times (0,M)$ for any $s \in [0,t].$ Consequently, we have
$$
\langle \nu_t , \varphi^{\sharp} \rangle = -  \int_0^t \langle \nu ,  (u - u^{\varepsilon}) \cdot \nabla \varphi^{\varepsilon} - (g - g^{\varepsilon}) \varphi^{\varepsilon} \rangle {\rm d}s.
$$
This entails that:
$$
|\langle \nu_t , \varphi^{\sharp} \rangle|  \leq  C\left[I_1 + I_2 + I_3\right]  
$$
where :
\begin{eqnarray*}
I_1 &=& \int_0^t \|(u_1 - u_1^{\varepsilon}) \partial_1 \varphi^{\varepsilon} \|_{L^{\infty}(\mathbb R \times (0,M))} \\
I_2 &=&  \int_0^t \|(u_2 - u_2^{\varepsilon}) \partial_2 \varphi^{\varepsilon} \|_{L^{\infty}(\mathbb R  \times (0,M))} \\
I_3 &=& \int_0^t   \|(g - g^{\varepsilon}) \varphi^{\varepsilon}\|_{ L^{\infty}(\mathbb R  \times (0,M))} 
\end{eqnarray*}

Concerning $I_3$ at first, we apply classical maximum-principle arguments  yielding that, for any $s \in (0,t)$ :
$$
\|\varphi^{\varepsilon}(s,\cdot) \|_{L^{\infty}}  \\\leq \|\varphi^{\sharp}\|_{ L^{\infty}} \exp \left(\int_0^t \|g^{\varepsilon} \|_{L^{\infty}} \right). 
$$
Due to the convergence of $g^{\varepsilon}$ towards $g$ we obtain that $\varphi^{\varepsilon}$ is uniformly bounded independently of $\varepsilon$ and  that 
$$
|I_3| \leq C \int_0^t \|g - g^{\varepsilon} \|_{L^{\infty}}  \to 0 \quad \text{ when $\varepsilon \to 0.$} 
$$
Then, we differentiate the transport equation for $\varphi^{\varepsilon}$ w.r.t. $x_2.$ As $\partial_2 u_1^{\varepsilon} = 0,$ we obtain that $\varphi^{\varepsilon}_2 = \partial_2 \varphi^{\varepsilon}$ satisfies:
\begin{align*}
\partial_t \varphi_2^{\varepsilon} + u^{\varepsilon} \cdot \nabla \varphi^{\varepsilon}_2  &=  \partial_2 g^{\varepsilon} \varphi^{\varepsilon} +  g^{\varepsilon} \varphi^{\varepsilon}_2 - \partial_2 u^{\varepsilon}_2 \varphi_2^{\varepsilon}  &  \text{ on $(0,t) \times \mathbb R  \times (0,M)\,,$}\\
\varphi_2^{\varepsilon}(t,\cdot) &= \partial_2 \varphi^{\sharp} & \text{ on $ \mathbb R  \times (0,M)\,.$}
\end{align*}
Refering again to a maximum principle argument for transport equations,  we obtain that, for any 
$s \in (0,t)$ :
$$
\|\varphi_2^{\varepsilon}(s,\cdot) \|_{L^{\infty}}
\leq  \left( \|\partial_2 \varphi^{\sharp}\|_{L^{\infty}} + \int_0^t \|\partial_2 g^{\varepsilon} \varphi^{\varepsilon} \|_{L^{\infty}} \right) \\
\exp \left(\int_0^t \|g^{\varepsilon}  \|_{L^{\infty}} +\|\partial_2 u_2 \|_{L^{\infty}} \right) .
$$
Applying the uniform bound on $\varphi^{\varepsilon}$ together with \eqref{eq_unifug} we get:
$$
\|\varphi_2^{\varepsilon}(s,\cdot) \|_{L^{\infty}} \leq  C\,, \quad \forall \, s \in (0,t).
$$
Combining this remark with the convergence \eqref{eq_convu2} we obtain then:
\begin{eqnarray*}
|I_2| & \leq &  C \sup_{(0,t)} \|\varphi_2^{\varepsilon} \|_{ L^{\infty}} 
         \int_0^t \|(u^{\varepsilon}_2 - u_2) \|_{L^{\infty}} \\
      & \leq &  C \int_0^t \|(u^{\varepsilon}_2 - u_2)\|_{L^{\infty} }  \to 0 \quad   \text{when $\varepsilon \to 0.$}
\end{eqnarray*}
Finally, to compute $I_1$ we differentiate the transport equation for $\varphi^{\varepsilon}$ w.r.t. $x_1.$ We obtain that $\varphi^{\varepsilon}_1 = \partial_1 \varphi^{\varepsilon}$ satisfies:
\begin{align*}
\partial_t \varphi_1^{\varepsilon} + u^{\varepsilon} \cdot \nabla \varphi^{\varepsilon}_1  &=  \partial_1 g^{\varepsilon} \varphi^{\varepsilon} +  g^{\varepsilon} \varphi^{\varepsilon}_1 - \partial_1 u^{\varepsilon}_2 \varphi_2^{\varepsilon} - \partial_1 u_1^{\varepsilon} \varphi_1^{\varepsilon} &  \text{ on $(0,t) \times \mathbb R  \times (0,M)\,,$}\\
\varphi^{\varepsilon}_1(t,\cdot) &= \partial_1\varphi^{\sharp} & \text{ on $ \mathbb R  \times (0,M)\,.$}
\end{align*}
Again, this yields that, for any $s \in (0,t)$ : 
\begin{multline*}
\|\varphi_1^{\varepsilon}(s,\cdot) \|_{L^{\infty}}
\leq  \left( \|\partial_1 \varphi^{\sharp} \|_{ L^{\infty}} + \int_0^t  (\|\partial_1 g^{\varepsilon} \varphi^{\varepsilon}  \|_{L^{\infty}} +  \|\partial_1 u_2^{\varepsilon} \varphi_2^{\varepsilon} \|_{ ;L^{\infty}}  ) \right) \\
\\
\exp \left(\int_0^t \|g^{\varepsilon} \|_{L^{\infty}} +\|\partial_1 u^{\varepsilon}_1 \|_{ L^{\infty}} \right) 
\end{multline*}
Applying the uniform bound on $\varphi^{\varepsilon}$ and $\varphi_2^{\varepsilon}$ with \eqref{eq_unifug} and \eqref{eq_divug} we conclude that
$$
\sup_{s \in (0,t)} \|\varphi_1^{\varepsilon}(s,\cdot) \|_{L^{\infty}}  \leq \dfrac{C}{\sqrt{\varepsilon}}.
$$
Combining this remark with the convergence \eqref{eq_convu1} we obtain then:
\begin{eqnarray*}
|I_1| & \leq &  C \sup_{(0,t)} \|\varphi_1^{\varepsilon} \|_{L^{\infty}(\mathbb R \times (0,M))} \int_0^t \|(u^{\varepsilon}_1 - u_1)\|_{L^{\infty}((\mathbb R /L\mathbb Z) \times (0,M))} \\
      & \leq &  C \sqrt{\varepsilon}  \to 0 \quad \text{ when $\varepsilon \to 0.$}
\end{eqnarray*}
Finally, we have $\langle \nu_t , \varphi^{\sharp} \rangle = 0 $ whatever the value of $\varphi^{\sharp}$. As $\nu_t$ has support in $\mathcal K$ strictly contained in $\mathcal K'$ we conclude that $\nu_t = 0$ globally.
  \end{proof}

We then construct solutions for initial data which are convex combinations of Dirac measures.
Namely, we assume that there exists $(\alpha_i^0,\rho_i^0)_{i=1,\ldots,k} \in [L^{\infty}(\mathbb R /L\mathbb Z)]^{2k}$ 
satisfying:
\begin{align} \label{eq_initialyoung1}
&0 \leq \alpha_i^0(x) \leq 1 \quad \sum_{i=0}^k \alpha_i^0(x) = 1 \quad \text{a.e. in $\mathbb R/L\mathbb Z.$}\\
 \label{eq_initialyoung1bis}
&\dfrac 4M \leq \rho_i^0(x) \leq \dfrac M4  \quad \text{a.e. in $\mathbb R/L\mathbb Z.$}
\end{align}
and we consider the initial data for \eqref{eq_youngapp} that reads:
\begin{equation} \label{eq_initialyoung2}
\nu^0  = \sum_{i=1}^k \alpha_i^0(x) \delta_{\xi =\rho_i^0(x)}.
\end{equation}
We show that  we can construct a solution to \eqref{eq_youngapp} with the same structure (under the above assumptions on the velocity $u$ and $g).$ Namely, there holds:
\begin{lemma} \label{lem_youngcons}
Let \eqref{eq_initialyoung1}-\eqref{eq_initialyoung1bis}-\eqref{eq_initialyoung2} hold true. There exists $T_0<T$ and 
$$
(\alpha_i,\rho_i) \in L^{\infty}((0,T_0) \times (\mathbb R /L\mathbb Z) ) \cap C([0,T_0] ; L^1(\mathbb R /L\mathbb Z))
$$ satisfying
\begin{align}  \label{eq_cvx1}
& 0 \leq \alpha_i(t,x) \leq 1 \quad \sum_{i=0}^k \alpha_i(t,x) = 1 \quad \text{a.e.} \\
& \dfrac 2{M} \leq \rho_i(t,x) \leq \dfrac{M}2  \quad \text{a.e.}  \label{eq_cvx2}
\end{align}
such that $\nu = \sum_{i=1}^{k} \alpha_i \delta_{\xi=\rho_i} \in C([0,T_0] ; \mathcal Y_{\sharp})$ is a solution to \eqref{eq_youngapp}-\eqref{eq_youngappini}.
\end{lemma}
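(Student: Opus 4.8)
The plan is to build the solution by the method of characteristics, taking full advantage of the degeneracy \eqref{eq_hypu12}, namely $\partial_2 u_1=0$: because the first velocity component does not depend on $x_2=\xi$, the base flow in $x_1$ is common to all $k$ phases, so the $k$ Dirac masses lying over one base point are advected together in $x$ and only separate in the $\xi$ direction. First I would solve the base ODE $\dot X=u_1(t,X)$, $X(0,x_0)=x_0$; assumption \eqref{eq_hypu11} makes $u_1$ Lipschitz in $x_1$ (in the $L^1_t$ sense), so this flow is well defined on $[0,T]$ and its Jacobian $J_t=\partial_{x_0}X_t=\exp\big(\int_0^t\partial_1 u_1(s,X_s)\,ds\big)$ is finite and strictly positive. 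Along each base curve I would then integrate $\dot\Xi^{(i)}=u_2(t,X_t,\Xi^{(i)})$ with $\Xi^{(i)}(0)=\rho_i^0(x_0)$, which is solvable thanks to the Lipschitz bound \eqref{eq_hypu22}, and define $\rho_i$ by $\rho_i(t,X_t(x_0)):=\Xi^{(i)}(t)$; equivalently $\rho_i$ solves $\partial_t\rho_i+u_1\partial_x\rho_i=u_2(t,x,\rho_i)$.

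To find the weights, I would insert the ansatz $\nu=\sum_{i}\alpha_i\,\delta_{\xi=\rho_i}$ into the weak form of \eqref{eq_youngapp} and match, along the support $\xi=\rho_i$, the coefficients of a test function and of its $\partial_\xi$-derivative. This reproduces the $\rho_i$-equation above and yields for $\alpha_i$ the linear transport--reaction law $\partial_t\alpha_i+\partial_x(\alpha_i u_1)+\alpha_i\,g(t,x,\rho_i)=0$, which integrates along the base flow into the explicit formula $\alpha_i(t,X_t)\,J_t=\alpha_i^0(x_0)\,\exp\!\big(-\int_0^t g(s,X_s,\Xi^{(i)}_s)\,ds\big)$. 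By the very way these equations are extracted, $\nu=\sum_i\alpha_i\delta_{\xi=\rho_i}$ is then a solution of \eqref{eq_youngapp}--\eqref{eq_youngappini}; the regularity $\nu\in C([0,T_0];\mathcal Y_\sharp)$ follows from continuity of the flow together with the uniform bounds below, and uniqueness is furnished by the previous lemma.

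The two-sided bound \eqref{eq_cvx2} is exactly where the smallness of $T_0$ is spent: from $|\dot\Xi^{(i)}|\le\|u_2(t,\cdot)\|_{L^\infty}$ and $u_2\in L^1(0,T;C)$ one gets $|\rho_i(t)-\rho_i^0|\le\int_0^t\|u_2(s,\cdot)\|_{L^\infty}\,ds$, and choosing $T_0$ so that $\int_0^{T_0}\|u_2(s,\cdot)\|_{L^\infty}\,ds\le\min(2/M,M/4)$ keeps $\rho_i$ inside $[2/M,M/2]$ once it starts in $[4/M,M/4]$ by \eqref{eq_initialyoung1bis}. Positivity $\alpha_i\ge0$ in \eqref{eq_cvx1} is then immediate from the exponential formula and $J_t>0$.

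The hard part will be the remaining constraint $\sum_i\alpha_i=1$ in \eqref{eq_cvx1} --- equivalently, that $\nu$ stays a bona fide Young measure, i.e. that its $x$-marginal $m(t,x):=\sum_i\alpha_i=\langle\nu_{t,x},1\rangle$ remains identically $1$ (combined with positivity this also yields $\alpha_i\le1$). Testing \eqref{eq_youngapp} against a function of $x$ alone gives $\partial_t m+\partial_x(m u_1)+\langle\nu_{t,x},g\rangle=0$, which is \emph{not} closed in $m$ since $g$ depends on $\xi$. The crux is therefore to establish the cancellation $\langle\nu_{t,x},g\rangle=-\,m\,\partial_1 u_1$, which collapses the marginal equation to the pure transport equation $\partial_t m+u_1\partial_x m=0$ and forces $m\equiv m^0\equiv1$. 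This is precisely the point where the structural link between the data must be used --- in the present setting the relation $u_2=\xi\,g$ inherited from \eqref{eq_young3}, together with the effective-flux form of $\partial_1 u_1$ --- and I expect verifying this identity (or, alternatively, invoking the uniqueness lemma to identify the constructed $\nu$ with the limit of the exact Young measures $\nu_{\rho_n}$, which is a Young measure by closedness of $\mathcal Y_\sharp$) to be the technical heart of the argument.
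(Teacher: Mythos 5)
Your construction is, in its core, the same as the paper's own proof: the paper also reduces the lemma, by testing against tensorized functions $\psi(x)\beta(\xi)$, to exactly the coupled system you derive, namely $\partial_t\rho_i + u_1\partial_1\rho_i = u_2(\cdot,\cdot,\rho_i)$ and $\partial_t\alpha_i + \partial_1(\alpha_i u_1) + \alpha_i\,g(\cdot,\cdot,\rho_i) = 0$, and then solves this system by invoking an adaptation of DiPerna--Lions arguments in the spirit of \cite[Lemma 2]{BrHi}. You instead integrate along characteristics, which is legitimate under the standing hypotheses (\eqref{eq_hypu11} and \eqref{eq_hypu22} make the flows in $x_1$ and in $\xi$ Carath\'eodory--Lipschitz), and your Jacobian/exponential formula for $\alpha_i$, the smallness-of-$T_0$ argument giving \eqref{eq_cvx2}, and the positivity of the $\alpha_i$ are all correct.

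The step you leave open, $\sum_i\alpha_i = 1$, is a genuine issue, and you are right that it does not follow from the transport system alone; in fact, at the level of generality of the appendix it is \emph{false}: take $u\equiv 0$ and $g=g(t,x)\not\equiv 0$, for which the unique compactly supported measure-valued solution is $e^{-\int_0^t g}\,\delta_{\xi=\rho^0(x)}$, whose mass is not $1$. The paper's proof is silent on this point (the constraint is absorbed into the citation of \cite{BrHi}), so you have put your finger on a real gap shared by the paper itself. However, of your two suggested repairs only the second one works. The pointwise cancellation $\langle\nu,g\rangle = -m\,\partial_1 u_1$ cannot be verified for the constructed measure: in the application one has $\partial_x u = z^{\infty}/m^{\infty} + \kappa^{\infty}$, where $1/m^{\infty}$ and $\kappa^{\infty}$ are moments of the \emph{limit} Young measure, so the identity presupposes that the moments of your constructed $\nu$ agree with those of the limit, which is circular; moreover the structural relation $u_{\xi}=\xi g$ yields conservation of the \emph{first} moment $\sum_i\alpha_i\rho_i$, not of the zeroth. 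The correct completion is your alternative route: the proof of the uniqueness lemma applies verbatim to the difference of two time-continuous, compactly supported measure-valued solutions, whether or not they are Young measures, so the constructed $\nu$ must coincide with the weak-$*$ limit of the exact Young measures $\nu_{\rho_n}$, which belongs to $\mathcal Y_{\sharp}$ because $\mathcal Y_{\sharp}$ is weak-$*$ closed; the constraint $\sum_i\alpha_i = 1$ (and hence $\alpha_i\le 1$) then holds a posteriori, and this is precisely how the lemma is exploited in the body of the paper.
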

\begin{proof}
The proof is straightforward.   Let $\nu = \sum_{i=1}^{k} \alpha_i \delta_{\xi=\rho_i}$  with $(\alpha_i,\rho_i)$
as in the statement of the theorem. We have thus that,
for arbitrary $\phi \in C_c(\mathbb R \times [0,M]),$ there holds:
\begin{equation} \label{eq_formyoung}
\langle \nu_t , \phi \rangle = \int_{0}^L \sum_{i=1}^k \alpha_i(t,x) \phi(x,\rho_i(t,x)) {\rm d}x.
\end{equation}
Hence $\langle \nu , \phi \rangle \in C([0,T_0])$ with $|\langle \nu_t, \phi \rangle | \leq L \|\phi\|_{L^{\infty}_{\sharp}}$  so that
we have indeed $\nu \in C([0,T_0] ; \mathcal Y_{\sharp}).$   
 
Then, applying a classical density argument, we obtain that $\nu$ is charaterized by its action on tensorized test-functions $(x,\xi) \mapsto \psi(x) \beta(\xi).$ 
Plugging $\psi \otimes \beta$ as test-function  in \eqref{eq_youngapp}-\eqref{eq_youngappini}, we obtain the following equations:
\begin{eqnarray*}
\partial_t  \sum_{i=1}^k \alpha_i \beta(\rho_i)  + \partial_1  \sum_{i=1}^k \alpha_i \beta(\rho_i) u_1(\cdot,\cdot,\rho_i) 
& =& \sum_{i=1}^k \alpha_i u_{2}(\cdot,\cdot,\rho_i) \beta'(\rho_i) - \sum_{i=1}^k \alpha_i g(\cdot,\cdot,\rho_i) \beta(\rho_i).\\
  \sum_{i=1}^k \alpha_i \beta(\rho_i)_{|_{t=0}} &=& \sum_{i=1}^k \alpha_i^0 \beta(\rho_i^0).  
\end{eqnarray*}   
Finally, we obtain that $\nu$  is a solution to \eqref{eq_youngapp}-\eqref{eq_youngappini}  if the $(\alpha_i,\rho_i)$ satisfy simultaneously:
 \begin{align}
\partial_t \alpha_i + \partial_1 \left(\alpha_i u_{1}  \right) + \alpha_i g(\cdot,\cdot,\rho_i) & =0\\
[\alpha_i]_{|_{t=0}} &= \alpha_i^0
 \end{align}
 and
 \begin{align}
 \partial_t \rho_i + u_1 \partial_1 \rho_i - u_{2}(\cdot,\cdot,\rho_i) &=0 \\
 [\rho_i]_{|_{t=0}} &= \rho_i^0.
 \end{align}
Remark that we introduced that $u_1$ does not depend on $\rho_i$ (by assumption). Existence of a solution 
$$
(\alpha_i,\rho_i) \in L^{\infty}((0,T_0) \times (\mathbb R /L\mathbb Z) ) \cap C([0,T_0] ; L^1(\mathbb R /L\mathbb Z))
$$ 
to this system satisfying \eqref{eq_cvx1}-\eqref{eq_cvx2} follows from a straightforward adaptation of Di Perna-Lions arguments
in the spirit of  \cite[Lemma 2]{BrHi}.

\end{proof}

\section{Formal calculation versus Young measure method} 
 Let us compare in this appendix the system obtained through a formal WKB method and the
 system derived using kinetic formulation and characterization of the Young measures.
 With the Young measure method in the two-fluid setting, we get the following equation on  $\alpha_+$ : 
$$
\partial_t \alpha_+ + u \partial_x \alpha_+ + \alpha_+ \partial_x u = \dfrac{\alpha_+}{\mu_+} \left[ \dfrac{1}{\frac{\alpha_+}{\mu_+} + \frac{\alpha_-}{\mu_-}} \left( \partial_x u - \left(\alpha_+ \dfrac{p_+}{\mu_+} + \alpha_- \dfrac{p_-}{\mu_-}\right)  \right) + p_+ \right]
$$
Thus we get the following equation 
$$
\partial_t \alpha_+ + u \partial_x \alpha_+ = \kappa_1 \partial_x u + \kappa_2 
$$
with
\begin{eqnarray*}
\kappa_1 &=&  \dfrac{\alpha_+}{\mu_+}  \dfrac{1}{\frac{\alpha_+}{\mu_+} + \frac{\alpha_-}{\mu_-}} - \alpha_+  \\
		&=& \dfrac{\alpha_+}{\mu_+ } \dfrac{1 - \mu_+ (\frac{\alpha_+}{\mu_+} + \frac{\alpha_-}{\mu_-})}{\frac{\alpha_+}{\mu_+} + \frac{\alpha_-}{\mu_-}}\\
		&=& \dfrac{\alpha_+ \alpha_-}{\mu_+ } \dfrac{ 1 - \frac{\mu_+}{\mu_-}}{\frac{\alpha_+}{\mu_+} + \frac{\alpha_-}{\mu_-}} = \dfrac{\alpha_+ \alpha_- (\mu_- - \mu_+)}{\alpha_+ \mu_+ + \alpha_- \mu_-}.
\end{eqnarray*}
and 
\begin{eqnarray*}
\kappa_2 &=&   \dfrac{\alpha_+}{\mu_+} \left[ p_+ -\dfrac{1}{\frac{\alpha_+}{\mu_+} + \frac{\alpha_-}{\mu_-}}\left(\alpha_+ \dfrac{p_+}{\mu_+} + \alpha_- \dfrac{p_-}{\mu_-} \right) \right] \\
		&=& \dfrac{\alpha_+\alpha_-}{\mu_+\mu_-}\dfrac{1}{\frac{\alpha_+}{\mu_+} + \frac{\alpha_-}{\mu_-}} (p_+ - p_-) .\\
		&=& \dfrac{\alpha_+\alpha_-}{\alpha_-\mu_+ + \alpha_+ \mu_-}(p_+ - p_-) .
\end{eqnarray*}
This reads
$$
\partial_t \alpha_+ + u \partial_x \alpha_+ = \dfrac{\alpha_+\alpha_-}{\alpha_-\mu_+ + \alpha_+ \mu_-} \left[ (p_+ - p_-)  + (\mu_- - \mu_+) \partial_x u \right].
$$
As for the momentum equation, we obtain : 
$$
\partial_{t} (\rho u) + \partial_x (\rho u^2) -  \partial_x (m^{\infty} \partial_x u ) + \partial_x \pi^{\infty} = 0,
$$
where
$$
m^{\infty} =  \dfrac{1}{\frac{\alpha_+}{\mu_+} + \frac{\alpha_-}{\mu_-}}  = \dfrac{\mu_+ \mu_-}{\alpha_+ \mu_- + \alpha_- \mu_+}
$$
and $\pi^{\infty} = m^{\infty}\kappa^{\infty}$ with
$$
\kappa^{\infty}  = \alpha_+ \dfrac{p_+}{\mu_+} + \alpha_-\dfrac{p_-}{\mu_-} 
$$
and thus :
$$
\pi^{\infty} = m^{\infty}\kappa^{\infty}  =  \dfrac{\alpha_+ p_+  \mu_- + \alpha_- p_- \mu_+}{\alpha_+ \mu_- + \alpha_- \mu_+}.
$$
This is, up to the notations, the system obtained using the WKB method.

\bigskip

\noindent {\bf Acknowledgements.} The authors are partially supported by the ANR- 13-BS01-0003-01 project DYFICOLTI.


\begin{thebibliography}{XX} 

\bibitem{AmZl}
\sc A.A. Amosov, A.A. Zlotnik.
\rm On the error of quasi-averaging of the equations of motion of a viscous
barotropic medium with rapidly oscillating data.
\it Comp. Maths. Math. Phys, \rm 36, 10, 1415--1428, (1996).

 \bibitem{Br}
 \sc D. Bresch.
 \rm Topics on compressible Navier--Stokes equations with non degenerate viscosities.
 \rm contributions by R. Danchin, A. Novotny, M.~Perepetlisa.
 \rm Panorama et synth\`eses, submitted (2014).
 
 \bibitem{Br2}
 \sc D. Bresch.
\rm Shallow-water equations and related topics. 
\rm Handbook of Differential Equations, Evolutionary equations, 
\rm vol. 5, Edited by C.M. Dafermos and M. Pokorny, (2009), 1--102.

 
\bibitem{BD} 
\sc D. Bresch, B. Desjardins. 
\rm On the existence of global weak solutions to the Navier--Stokes equations
 for viscous compressible and heat conducting fluids.
\it J. Math. Pures et Appl., \rm 57--90 (2007).

\bibitem{BrDeEvian}
\sc D. Bresch, B. Desjardins.
\rm Sur la th\'eorie globale des \'equations de Navier--Stokes compressibles, 
\rm Actes des 33\`emes journ\'ees  "Equations aux d\'eriv\'ees Partielles, Evian (2006).

 
 \bibitem{BrHi}
 \sc D. Bresch, M. Hillairet.
 \rm  Note on the derivation of multicomponent flow systems.
 \it Proc. AMS, \rm 143, 3429--3443, (2015).
 
\bibitem{BrHu}
\sc D. Bresch, X. Huang. 
\rm  A multi-fluid compressible system as the limit of weak solutions of the isentropic compressible Navier--Stokes equations. 
\it Arch. Ration. Mech. Anal., \rm 201(2):647--680, (2011).

\bibitem{DrPa}
\sc D. Drew, S.L. Passman.
\it Theory of multicomponent fluids.
\rm  Applied Math Sciences, (1999).

\bibitem{FeNoPe}
\sc E. Feireisl, A. Novotny, H. Petzeltova.
\rm On the existence of globally defined weak solutions to the Navier--Stokes equations.
\it J. Math. Fluid Mech., \rm 3, 358--392, (2001). 

\bibitem{Ha}
\sc B. Haspot.
\rm  Existence of global strong solution for the compressible Navier--Stokes equations with degenerate viscosity coefficients in 1D.
\rm See arXiv:1411.5503

\bibitem{Ka}
\sc Y. Kanel.
\rm On a model system of equations of one-dimensional gas motion, 
\it Diff. Eq. \rm  4, 374--380, (1968).

\bibitem{E}
\sc W. E. 
\rm Propagation of oscillations in the solutions of 1-D compressible fluid equations. 
\it Comm. Partial Differential Equations, \rm 17(3-4):347--370, (1992).

\bibitem{Hi}
\sc M. Hillairet.
\rm  Propagation of density-oscillations in solutions to barotropic compressible 
Navier--Stokes system.
\it  J. of Math. Fluid Mech. \rm (9), 343--376, (2007).

\bibitem{IsHi}
\sc M. Ishii, T. Hibiki.
\it Thermo-fluid dynamics of two-phase flow.
\rm Springer (2006).

\bibitem{Li2} 
\sc P.--L. Lions.
\it Mathematical topics in fluid mechanics,   
Vol. II: compressible models. 
\rm Oxford Lect. Ser. Math. Appl.  (1998). 

\bibitem{MeVa}
\sc A. Mellet, A. Vasseur.
\rm Existence and Uniqueness of global strong solutions for one-dimensional compressible Navier-Stokes equations. 
\it SIAM J. Math. Anal. \rm 39 (2007/08), no. 4, 1344--1365.

\bibitem{PlSo}
\sc P. Plotnikov, J. Sokolowski.
\it Compressible Navier-Stokes equations, Theory and Shape Optimization.
\rm Series: Monografie Matematyczne. 
\rm Birkh\"auser Basel (2012).

\bibitem{Se}
\sc D. Serre. 
\rm Variations de grande amplitude pour la densit\'e dÕun fluide visqueux compressible.
\it  Phys. D, \rm 48(1):113--128, (1991).

\bibitem{Se1}
\sc D. Serre.
\rm Asymptotics of homogeneous oscillations in a compressible viscous fluid.
\it Bul. Soc. Bras. Mat, \rm 32, 535--442 (2001).

\bibitem{So}
\sc V. A. Solonnikov. 
\rm The solvability of the initial-boundary value problem for the equations of motion of a viscous compressible fluid. Zap. Naucn. Sem. Leningrad. Otdel. Mat. Inst. Steklov. (LOMI), 56: 128--142, 197, 1976. Investigations on linear operators and theory of functions, VI.

\bibitem{Tartar}
\sc L. Tartar.
\rm The compensated compactness method applied to systems of conservation
  laws.  In {\rm Systems of nonlinear partial differential equations}
  ({O}xford, 1982), volume 111 of {\em NATO Adv. Sci. Inst. Ser. C Math. Phys.
  Sci.}, pages 263--285. Reidel, Dordrecht, 1983.

  \end{thebibliography}
\end{document}